\def\myDA{\scalerel*{\downarrow}{X}}
\tikzset{
v/.style={draw, fill, circle, minimum size=1.5mm, inner sep=0},
b/.style={draw , regular polygon,regular polygon sides=4, minimum size=1.5mm, inner sep=.5mm},
e/.style={very thick},
vs/.style={draw, fill, circle, minimum size=1mm, inner sep=0},
bs/.style={draw,  regular polygon,regular polygon sides=4, minimum size=2mm, inner sep=0mm},
es/.style={thick}
}
\newlength{\nodeheight}
\newlength{\nodewidth}
\newlength{\nodehelp}
\newlength{\negnodeheight}
\theoremstyle{plain}
\newtheorem{theorem}{Theorem}[section]
\theoremstyle{definition} \newtheorem{definition}[theorem]{Definition}
\theoremstyle{plain}
\newtheorem{lemma}[theorem]{Lemma}
\theoremstyle{plain} \newtheorem{proposition}[theorem]{Proposition}
\theoremstyle{plain} 
\theoremstyle{plain} \newtheorem{corollary}[theorem]{Corollary}
\theoremstyle{remark} 
\theoremstyle{remark} \newtheorem{remark}[theorem]{Remark}
\theoremstyle{remark} 
\theoremstyle{remark} 
\theoremstyle{remark} 
\theoremstyle{definition} 
\renewcommand{\t}{\mathbbm{1}}
\newcommand{\rook}{\mathcal{R}}
\newcommand{\Tor}{\mathrm{Tor}}
\newcommand{\Ext}{\mathrm{Ext}}
\newcommand{\Hom}{\mathrm{Hom}}
\newcommand{\Br}{\mathrm{Br}}
\newcommand{\TL}{\mathrm{TL}}
\newcommand{\norm}[1]{\lvert{#1}\rvert}
\newcommand{\Def}{\mathrm{Def}}
\newcommand{\ch}{\mathrm{char}}
\newcommand{\Span}{\mathrm{Span}}
\newcommand{\Tot}{\mathrm{Tot}}
\DeclareMathOperator{\OutermostCupComplex}{S}
\newcommand{\CPL}{L}
\newcommand{\CPLr}{\overline{L}}
\newcommand{\standardWidth}{1}
\newcommand{\standardHeight}{0.8}
 \subjclass[2020]{
        20J06,
        16E40,
        16E45
    }
    \keywords{Homology, Temperley--Lieb algebras, differential graded algebras} 
\begin{document}

\title{The dga of planar loops when $2n=4$}
\author{Guy Boyde}
\address{Department of Mathematics, Vrije Universiteit Amsterdam, De Boelelaan 1111, 1081 HV Amsterdam, The Netherlands}

\begin{abstract} The dga of planar loops is a pictorial chain complex introduced in a recent paper of the author, Boyd, Randal-Williams, and Sroka, where a minimal model for it was given. In the first nontrivial case, $2n=4$, we give a new model which incorporates two natural `reflection' involutions, as well as a more explicit description of the existing model.
\end{abstract}

\maketitle

\section{Introduction}

In \cite{BH}, Boyd and Hepworth-Young proved vanishing of certain homology groups of Temperley--Lieb algebras in low dimensions. Homology here is defined using $\Tor$, in analogy with the usual notion of group homology, and their result demonstrated (along with \cite{Hepworth}, which treated Iwahori--Hecke algebras) that the methods of `homological stability', hitherto used to study families of groups, can also be fruitfully applied to families of algebras. Many other families of algebras have since been studied (for an overview, see \cite[Section 1.3]{BBRWS}). For Temperley--Lieb algebras, the original vanishing results were extended by Sroka \cite{Sroka}, and then more recently the remaining unknown homology groups were shown to have a rich and nontrivial structure governed by the homology of a certain chain complex: the \emph{dga of planar loops} \cite{BBRWS}. This allowed complete computations in many cases. This chain complex is a new object which is easy to define and interesting in its own right, and here we will study it further.

The definition is pictorial. Fix an even integer $2n \geq 0$. Draw $p$ vertical lines (\emph{bars}) in the plane. Mark $2n$ points (\emph{nodes}) on each bar. A \emph{system of planar loops (pinned by $p$ bars)} is an isotopy class of a number of disjoint loops which hits each node precisely once, and such that each loop hits at least two nodes.

Given a chosen element $a$ of a commutative ring $R$, we may fit these systems of loops together as $p$ varies to form a chain complex of $R$-modules $\CPL(2n) = \CPL(2n;R,a)$, whose component $\CPL(2n)_p$ in degree $p$ has $R$-basis systems of planar loops pinned by $p$ bars: this is the \emph{dga of planar loops}.

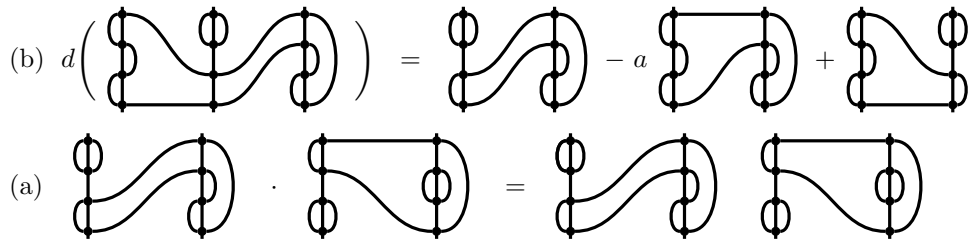
\begin{figure}
\begin{flushleft}
$\mathrm{(a)}$
\
$d \Biggl($
\begin{tikzpicture}[x=1.5cm,y=-.5cm,baseline=-.7cm]
\def\wid{0.8*\standardWidth}
\def\hei{\standardHeight}
\def\nodesize{3}
\def\ang{90}

\node[v, minimum size=\nodesize] (11) at (0* \wid,0*\hei) {};
\node[v, minimum size=\nodesize] (12) at (0* \wid,1*\hei) {};
\node[v, minimum size=\nodesize] (13) at (0* \wid,2*\hei) {};
\node[v, minimum size=\nodesize] (14) at (0* \wid,3*\hei) {};

\draw[e] (11) to[in =180, out =180] (12);
\draw[e] (13) to[in =180, out =180] (14);

\node[v, minimum size=\nodesize] (21) at (1* \wid,0*\hei) {};
\node[v, minimum size=\nodesize] (22) at (1* \wid,1*\hei) {};
\node[v, minimum size=\nodesize] (23) at (1* \wid,2*\hei) {};
\node[v, minimum size=\nodesize] (24) at (1* \wid,3*\hei) {};

\draw[e] (12) to[in =0, out =0] (13);
\draw[e] (11) to[out = 0, in = 180] (23);
\draw[e] (14) to[out = 0, in = 180] (24);
\draw[e] (21) to[out = 180, in = 180] (22);

\node[v, minimum size=\nodesize] (31) at (2* \wid,0*\hei) {};
\node[v, minimum size=\nodesize] (32) at (2* \wid,1*\hei) {};
\node[v, minimum size=\nodesize] (33) at (2* \wid,2*\hei) {};
\node[v, minimum size=\nodesize] (34) at (2* \wid,3*\hei) {};

\draw[e] (21) to[in =0, out =0] (22);
\draw[e] (23) to[out = 0, in = 180] (31);
\draw[e] (24) to[out = 0, in = 180] (32);
\draw[e] (33) to[out = 180, in = 180] (34);

\draw[e] (31) to[in =0, out =0] (34);
\draw[e] (32) to[in =0, out =0] (33);

\draw[very thick] (0* \wid,-0.25*\hei) -- (0* \wid,3.25*\hei);
\draw[very thick] (1* \wid,-0.25*\hei) -- (1* \wid,3.25*\hei);
\draw[very thick] (2* \wid,-0.25*\hei) -- (2* \wid,3.25*\hei);
\end{tikzpicture}
$\Biggr)$
\ \
$=$
\ \
\begin{tikzpicture}[x=1.5cm,y=-.5cm,baseline=-.7cm]
\def\wid{0.8*\standardWidth}
\def\hei{\standardHeight}
\def\nodesize{3}
\def\ang{90}

\node[v, minimum size=\nodesize] (21) at (1* \wid,0*\hei) {};
\node[v, minimum size=\nodesize] (22) at (1* \wid,1*\hei) {};
\node[v, minimum size=\nodesize] (23) at (1* \wid,2*\hei) {};
\node[v, minimum size=\nodesize] (24) at (1* \wid,3*\hei) {};

\draw[e] (24) to[out = 180, in = 180] (23);
\draw[e] (21) to[out = 180, in = 180] (22);

\node[v, minimum size=\nodesize] (31) at (2* \wid,0*\hei) {};
\node[v, minimum size=\nodesize] (32) at (2* \wid,1*\hei) {};
\node[v, minimum size=\nodesize] (33) at (2* \wid,2*\hei) {};
\node[v, minimum size=\nodesize] (34) at (2* \wid,3*\hei) {};

\draw[e] (21) to[in =0, out =0] (22);
\draw[e] (23) to[out = 0, in = 180] (31);
\draw[e] (24) to[out = 0, in = 180] (32);
\draw[e] (33) to[out = 180, in = 180] (34);

\draw[e] (31) to[in =0, out =0] (34);
\draw[e] (32) to[in =0, out =0] (33);

\draw[very thick] (1* \wid,-0.25*\hei) -- (1* \wid,3.25*\hei);
\draw[very thick] (2* \wid,-0.25*\hei) -- (2* \wid,3.25*\hei);
\end{tikzpicture}
$- \ a $
\begin{tikzpicture}[x=1.5cm,y=-.5cm,baseline=-.7cm]
\def\wid{0.8*\standardWidth}
\def\hei{\standardHeight}
\def\nodesize{3}
\def\ang{90}

\node[v, minimum size=\nodesize] (11) at (0* \wid,0*\hei) {};
\node[v, minimum size=\nodesize] (12) at (0* \wid,1*\hei) {};
\node[v, minimum size=\nodesize] (13) at (0* \wid,2*\hei) {};
\node[v, minimum size=\nodesize] (14) at (0* \wid,3*\hei) {};

\draw[e] (11) to[in =180, out =180] (12);
\draw[e] (13) to[in =180, out =180] (14);

\node[v, minimum size=\nodesize] (21) at (1* \wid,0*\hei) {};
\node[v, minimum size=\nodesize] (22) at (1* \wid,1*\hei) {};
\node[v, minimum size=\nodesize] (23) at (1* \wid,2*\hei) {};
\node[v, minimum size=\nodesize] (24) at (1* \wid,3*\hei) {};

\draw[e] (12) to[in =0, out =0] (13);
\draw[e] (11) to[out = 0, in = 180] (21);
\draw[e] (14) to[out = 0, in = 180] (22);
\draw[e] (23) to[out = 180, in = 180] (24);

\draw[e] (21) to[in =0, out =0] (24);
\draw[e] (22) to[out = 0, in = 0] (23);

\draw[very thick] (0* \wid,-0.25*\hei) -- (0* \wid,3.25*\hei);
\draw[very thick] (1* \wid,-0.25*\hei) -- (1* \wid,3.25*\hei);
\end{tikzpicture}
$+$
\begin{tikzpicture}[x=1.5cm,y=-.5cm,baseline=-.7cm]
\def\wid{0.8*\standardWidth}
\def\hei{\standardHeight}
\def\nodesize{3}
\def\ang{90}

\node[v, minimum size=\nodesize] (11) at (0* \wid,0*\hei) {};
\node[v, minimum size=\nodesize] (12) at (0* \wid,1*\hei) {};
\node[v, minimum size=\nodesize] (13) at (0* \wid,2*\hei) {};
\node[v, minimum size=\nodesize] (14) at (0* \wid,3*\hei) {};

\draw[e] (11) to[in =180, out =180] (12);
\draw[e] (13) to[in =180, out =180] (14);

\node[v, minimum size=\nodesize] (21) at (1* \wid,0*\hei) {};
\node[v, minimum size=\nodesize] (22) at (1* \wid,1*\hei) {};
\node[v, minimum size=\nodesize] (23) at (1* \wid,2*\hei) {};
\node[v, minimum size=\nodesize] (24) at (1* \wid,3*\hei) {};

\draw[e] (12) to[in =0, out =0] (13);
\draw[e] (11) to[out = 0, in = 180] (23);
\draw[e] (14) to[out = 0, in = 180] (24);
\draw[e] (21) to[out = 180, in = 180] (22);

\draw[e] (21) to[in =0, out =0] (22);
\draw[e] (23) to[out = 0, in = 0] (24);

\draw[very thick] (0* \wid,-0.25*\hei) -- (0* \wid,3.25*\hei);
\draw[very thick] (1* \wid,-0.25*\hei) -- (1* \wid,3.25*\hei);
\end{tikzpicture}
\\
\vspace{0.2cm}
$\mathrm{(b)}$
\ \
\begin{tikzpicture}[x=1.5cm,y=-.5cm,baseline=-.7cm]

\def\wid{\standardWidth}
\def\hei{\standardHeight}
\def\nodesize{3}
\def\ang{90}

\draw[very thick] (1* \wid,-0.25*\hei) -- (1* \wid,3.25*\hei);
\draw[very thick] (2* \wid,-0.25*\hei) -- (2* \wid,3.25*\hei);

\node[v, minimum size=\nodesize] (21) at (1* \wid,0*\hei) {};
\node[v, minimum size=\nodesize] (22) at (1* \wid,1*\hei) {};
\node[v, minimum size=\nodesize] (23) at (1* \wid,2*\hei) {};
\node[v, minimum size=\nodesize] (24) at (1* \wid,3*\hei) {};

\draw[e] (24) to[out = 180, in = 180] (23);
\draw[e] (21) to[out = 180, in = 180] (22);

\node[v, minimum size=\nodesize] (31) at (2* \wid,0*\hei) {};
\node[v, minimum size=\nodesize] (32) at (2* \wid,1*\hei) {};
\node[v, minimum size=\nodesize] (33) at (2* \wid,2*\hei) {};
\node[v, minimum size=\nodesize] (34) at (2* \wid,3*\hei) {};

\draw[e] (21) to[in =0, out =0] (22);
\draw[e] (23) to[out = 0, in = 180] (31);
\draw[e] (24) to[out = 0, in = 180] (32);
\draw[e] (33) to[out = 180, in = 180] (34);

\draw[e] (31) to[in =0, out =0] (34);
\draw[e] (32) to[in =0, out =0] (33);

\end{tikzpicture}
\ \
$\cdot $
\ \
\begin{tikzpicture}[x=1.5cm,y=-.5cm,baseline=-.7cm]

\def\wid{\standardWidth}
\def\hei{\standardHeight}
\def\nodesize{3}
\def\ang{90}

\node[v, minimum size=\nodesize] (41) at (3* \wid,0*\hei) {};
\node[v, minimum size=\nodesize] (42) at (3* \wid,1*\hei) {};
\node[v, minimum size=\nodesize] (43) at (3* \wid,2*\hei) {};
\node[v, minimum size=\nodesize] (44) at (3* \wid,3*\hei) {};

\draw[e] (43) to[in =0, out =0] (44);
\draw[e] (43) to[in =180, out =180] (44);

\draw[e] (41) to[in =180, out =180] (42);

\node[v, minimum size=\nodesize] (51) at (4* \wid,0*\hei) {};
\node[v, minimum size=\nodesize] (52) at (4* \wid,1*\hei) {};
\node[v, minimum size=\nodesize] (53) at (4* \wid,2*\hei) {};
\node[v, minimum size=\nodesize] (54) at (4* \wid,3*\hei) {};

\draw[e] (41) to[in =180, out =0] (51);
\draw[e] (42) to[in =180, out =0] (54);

\draw[e] (52) to[in =0, out =0] (53);
\draw[e] (52) to[in =180, out =180] (53);
\draw[e] (51) to[in =0, out =0] (54);

\draw[very thick] (3* \wid,-0.25*\hei) -- (3* \wid,3.25*\hei);
\draw[very thick] (4* \wid,-0.25*\hei) -- (4* \wid,3.25*\hei);
\end{tikzpicture}
\ \
$=$
\ \
\begin{tikzpicture}[x=1.5cm,y=-.5cm,baseline=-.7cm]

\def\wid{\standardWidth}
\def\hei{\standardHeight}
\def\nodesize{3}
\def\ang{90}

\draw[very thick] (1* \wid,-0.25*\hei) -- (1* \wid,3.25*\hei);
\draw[very thick] (2* \wid,-0.25*\hei) -- (2* \wid,3.25*\hei);

\node[v, minimum size=\nodesize] (21) at (1* \wid,0*\hei) {};
\node[v, minimum size=\nodesize] (22) at (1* \wid,1*\hei) {};
\node[v, minimum size=\nodesize] (23) at (1* \wid,2*\hei) {};
\node[v, minimum size=\nodesize] (24) at (1* \wid,3*\hei) {};

\draw[e] (24) to[out = 180, in = 180] (23);
\draw[e] (21) to[out = 180, in = 180] (22);
\draw[e] (21) to[out = 180, in = 180] (22);

\node[v, minimum size=\nodesize] (31) at (2* \wid,0*\hei) {};
\node[v, minimum size=\nodesize] (32) at (2* \wid,1*\hei) {};
\node[v, minimum size=\nodesize] (33) at (2* \wid,2*\hei) {};
\node[v, minimum size=\nodesize] (34) at (2* \wid,3*\hei) {};

\draw[e] (21) to[in =0, out =0] (22);
\draw[e] (23) to[out = 0, in = 180] (31);
\draw[e] (24) to[out = 0, in = 180] (32);
\draw[e] (33) to[out = 180, in = 180] (34);

\draw[e] (31) to[in =0, out =0] (34);
\draw[e] (32) to[in =0, out =0] (33);

\begin{scope}[shift={(-0.2*\wid,0)}]
\node[v, minimum size=\nodesize] (41) at (3* \wid,0*\hei) {};
\node[v, minimum size=\nodesize] (42) at (3* \wid,1*\hei) {};
\node[v, minimum size=\nodesize] (43) at (3* \wid,2*\hei) {};
\node[v, minimum size=\nodesize] (44) at (3* \wid,3*\hei) {};

\draw[e] (43) to[in =0, out =0] (44);
\draw[e] (43) to[in =180, out =180] (44);

\draw[e] (41) to[in =180, out =180] (42);

\node[v, minimum size=\nodesize] (51) at (4* \wid,0*\hei) {};
\node[v, minimum size=\nodesize] (52) at (4* \wid,1*\hei) {};
\node[v, minimum size=\nodesize] (53) at (4* \wid,2*\hei) {};
\node[v, minimum size=\nodesize] (54) at (4* \wid,3*\hei) {};

\draw[e] (41) to[in =180, out =0] (51);
\draw[e] (42) to[in =180, out =0] (54);

\draw[e] (52) to[in =0, out =0] (53);
\draw[e] (52) to[in =180, out =180] (53);
\draw[e] (51) to[in =0, out =0] (54);
\draw[very thick] (3* \wid,-0.25*\hei) -- (3* \wid,3.25*\hei);
\draw[very thick] (4* \wid,-0.25*\hei) -- (4* \wid,3.25*\hei);
\end{scope}
\end{tikzpicture}
\end{flushleft}

    \caption{$\mathrm{(a)}$ the action of the differential on a system of loops in $\CPL(4)_3$, and $\mathrm{(b)}$ The juxtaposition product.}
    \label{fig:first diagram}
\end{figure}

The differential $d$ is given by $d = \sum_{i=0}^{p-1} (-1)^{p} d_i$, where $d_i$ deletes the $i$-th bar to regard the system of loops as pinned by $p-1$ bars. Any \emph{unpinned} loops (those not intersecting any bar) resulting from this are removed, and for each unpinned loop removed we multiply by $a \in R$. The complex $\CPL(2n)$ enjoys a product, given by juxtaposition of pictures, which gives it the structure of a dga. See \cref{fig:first diagram} (b) for an example of the product $\CPL(4)_2 \times \CPL(4)_2 \to \CPL(4)_4$.

In \cite{BBRWS}, a minimal model for $\CPL(2n)$ is given: there is a weak equivalence
$$(T_R(x_1, x_3, \ldots, x_{2 n-1}), d) \overset{\sim}\to \CPL(2n; R,a)$$
of dg-$R$-algebras, where $x_i$ has degree $i$ and the differential is given by:
$$d(x_1)=a \text{, and } d(x_{2i-1}) = \sum_{\substack{j+k=i \\ j,k>0}} \binom{i}{j} x_{2j-1}x_{2k-1} \text{ for } 2i-1>1.$$

This map must carry each generator $x_{2i-1}$ to an $R$-linear combination of diagrams. It is shown in \cite{BBRWS} that $x_1$ corresponds to any diagram on a single bar consisting of only one loop (any two of these are homologous), but the other generators seem quite mysterious. The dga $\CPL(2n)$ carries two natural actions of the cyclic group of order 2: an involution, $\sigma_{\updownarrow}$, given by top-to-bottom reflection of diagrams and an anti-involution, $\sigma_{\leftrightarrow}$, given by left-to-right reflection of diagrams. The effect of these involutions (beyond that they fix the homology class of $x_1$) is not clear. They respect the differential in the respective senses that $d \sigma_{\updownarrow} = \sigma_{\updownarrow} d$, and $d \sigma_{\leftrightarrow} (x) = (-1)^{\deg(x)+1} \sigma_{\leftrightarrow} d(x)$.

The effect of these actions on homology is one of several open questions about the dga of planar loops \cite{BBRWS}. In this paper we give a model for $\CPL(4)$ which incorporates the involutions, and explain how it relates to the existing model. We will give the data $(A,d,\sigma_{\updownarrow},\sigma_{\leftrightarrow})$ of a dga $(A,d)$, together with an involution $\sigma_{\updownarrow}$ and an anti-involution $\sigma_{\leftrightarrow}$, which respect the differential as above. Said this way, our model is:
$$(T_R[x, \hat{x}, r, y], d, \sigma_{\updownarrow}, \sigma_{\leftrightarrow}),$$
where the generators have degrees $\deg(x) = \deg(\hat{x})=1$, $\deg(r)=2$, $\deg(y)=3$, the differential is given by $$d(x)=d(\widehat{x}) = a \text{, } d(r)=\hat{x}-x \text{, and } d(y) = 2 x \hat{x} - 2 a r,$$
the involution $\sigma_{\updownarrow}$ acts as the identity, and the anti-involution $\sigma_{\leftrightarrow}$ interchanges $x$ and $\hat{x}$ and acts by the identity on the other generators.

\begin{theorem} \label{thm: main}
    There is a weak equivalence of dgas
    $$\varphi \colon (T_R[x, \hat{x}, r, y], d) \overset{\sim} \to \CPL(4; R,a),$$
    commuting with both involutions, which is given on generators by
\begin{center}
$\varphi(x)=$
\begin{tikzpicture}[x=1.5cm,y=-.5cm,baseline=-0.7cm]
\def\wid{\standardWidth}
\def\hei{\standardHeight}
\def\nodesize{3}
\def\ang{90}

\node[v, minimum size=\nodesize] (11) at (0* \wid,0*\hei) {};
\node[v, minimum size=\nodesize] (12) at (0* \wid,1*\hei) {};
\node[v, minimum size=\nodesize] (13) at (0* \wid,2*\hei) {};
\node[v, minimum size=\nodesize] (14) at (0* \wid,3*\hei) {};

\draw[e] (11) to[in =180, out =180] (12);
\draw[e] (13) to[in =180, out =180] (14);
\draw[e] (12) to[in =0, out =0] (13);
\draw[e] (11) to[out = 0, in = 0] (14);

\draw[very thick] (0* \wid,-0.25*\hei) -- (0* \wid,3.25*\hei);
\end{tikzpicture}
,
$\varphi(\hat{x})=$
\begin{tikzpicture}[x=1.5cm,y=-.5cm,baseline=-0.7cm]
\def\wid{\standardWidth}
\def\hei{\standardHeight}
\def\nodesize{3}
\def\ang{90}

\node[v, minimum size=\nodesize] (11) at (0* \wid,0*\hei) {};
\node[v, minimum size=\nodesize] (12) at (0* \wid,1*\hei) {};
\node[v, minimum size=\nodesize] (13) at (0* \wid,2*\hei) {};
\node[v, minimum size=\nodesize] (14) at (0* \wid,3*\hei) {};

\draw[e] (11) to[in =0, out =0] (12);
\draw[e] (13) to[in =0, out =0] (14);
\draw[e] (12) to[in =180, out =180] (13);
\draw[e] (11) to[out = 180, in = 180] (14);

\draw[very thick] (0* \wid,-0.25*\hei) -- (0* \wid,3.25*\hei);
\end{tikzpicture}
, 
$\varphi(r)=$
\begin{tikzpicture}[x=1.5cm,y=-.5cm,baseline=-.7cm]

\def\wid{\standardWidth}
\def\hei{\standardHeight}
\def\nodesize{3}
\def\ang{90}

\draw[very thick] (1* \wid,-0.25*\hei) -- (1* \wid,3.25*\hei);
\draw[very thick] (2* \wid,-0.25*\hei) -- (2* \wid,3.25*\hei);

\node[v, minimum size=\nodesize] (21) at (1* \wid,0*\hei) {};
\node[v, minimum size=\nodesize] (22) at (1* \wid,1*\hei) {};
\node[v, minimum size=\nodesize] (23) at (1* \wid,2*\hei) {};
\node[v, minimum size=\nodesize] (24) at (1* \wid,3*\hei) {};

\draw[e] (24) to[out = 180, in = 180] (23);
\draw[e] (21) to[out = 180, in = 180] (22);

\node[v, minimum size=\nodesize] (31) at (2* \wid,0*\hei) {};
\node[v, minimum size=\nodesize] (32) at (2* \wid,1*\hei) {};
\node[v, minimum size=\nodesize] (33) at (2* \wid,2*\hei) {};
\node[v, minimum size=\nodesize] (34) at (2* \wid,3*\hei) {};

\draw[e] (21) to[out =0, in =180] (31);
\draw[e] (24) to[out = 0, in = 180] (34);
\draw[e] (22) to[out = 0, in = 0] (23);
\draw[e] (32) to[out = 180, in = 180] (33);

\draw[e] (31) to[in =0, out =0] (32);
\draw[e] (34) to[in =0, out =0] (33);

\end{tikzpicture}
$, \textrm{ and }$
\end{center}

\begin{center}
$\varphi(y)=$
    \begin{tikzpicture}[x=1.5cm,y=-.5cm,baseline=-0.7cm]
        \def\wid{\standardWidth}
            \def\hei{\standardHeight}
            \def\nodesize{3}

            \node[v, minimum size=\nodesize] (01) at (-1* \wid,0*\hei) {};
            \node[v, minimum size=\nodesize] (02) at (-1* \wid,1*\hei) {};
            \node[v, minimum size=\nodesize] (03) at (-1* \wid,2*\hei) {};
            \node[v, minimum size=\nodesize] (04) at (-1* \wid,3*\hei) {};

            \node[v, minimum size=\nodesize] (11) at (0* \wid,0*\hei) {};
            \node[v, minimum size=\nodesize] (12) at (0* \wid,1*\hei) {};
            \node[v, minimum size=\nodesize] (13) at (0* \wid,2*\hei) {};
            \node[v, minimum size=\nodesize] (14) at (0* \wid,3*\hei) {};

            \node[v, minimum size=\nodesize] (21) at (1* \wid,0*\hei) {};
            \node[v, minimum size=\nodesize] (22) at (1* \wid,1*\hei) {};
            \node[v, minimum size=\nodesize] (23) at (1* \wid,2*\hei) {};
            \node[v, minimum size=\nodesize] (24) at (1* \wid,3*\hei) {};

            \draw[e] (02) to[in =0, out =0] (03);
            \draw[e] (01) to[in =180, out =180] (02);
            \draw[e] (03) to[in =180, out =180] (04);
            \draw[e] (11) to[in =180, out =180] (12);
            \draw[e] (13) to[out = 180, in = 0] (01);
            \draw[e] (14) to[out = 180, in = 0] (04);

            \draw[e] (11) to[in =0, out =0] (12);
            \draw[e] (23) to[in =180, out =180] (22);
            \draw[e] (21) to[in =0, out =0] (22);
            \draw[e] (23) to[in =0, out =0] (24);
            \draw[e] (13) to[out = 0, in = 180] (21);
            \draw[e] (14) to[out = 0, in = 180] (24);

            \draw[very thick] (0* \wid,-0.25*\hei) -- (0* \wid,3.25*\hei);
            \draw[very thick] (-1* \wid,-0.25*\hei) -- (-1* \wid,3.25*\hei);
            \draw[very thick] (1* \wid,-0.25*\hei) -- (1* \wid,3.25*\hei);
    \end{tikzpicture}
    $+$ 
    \begin{tikzpicture}[x=1.5cm,y=-.5cm,baseline=-0.7cm]
       \def\wid{\standardWidth}
            \def\hei{\standardHeight}
            \def\nodesize{3}

            \node[v, minimum size=\nodesize] (01) at (-1* \wid,0*\hei) {};
            \node[v, minimum size=\nodesize] (02) at (-1* \wid,1*\hei) {};
            \node[v, minimum size=\nodesize] (03) at (-1* \wid,2*\hei) {};
            \node[v, minimum size=\nodesize] (04) at (-1* \wid,3*\hei) {};

            \node[v, minimum size=\nodesize] (11) at (0* \wid,0*\hei) {};
            \node[v, minimum size=\nodesize] (12) at (0* \wid,1*\hei) {};
            \node[v, minimum size=\nodesize] (13) at (0* \wid,2*\hei) {};
            \node[v, minimum size=\nodesize] (14) at (0* \wid,3*\hei) {};

            \node[v, minimum size=\nodesize] (21) at (1* \wid,0*\hei) {};
            \node[v, minimum size=\nodesize] (22) at (1* \wid,1*\hei) {};
            \node[v, minimum size=\nodesize] (23) at (1* \wid,2*\hei) {};
            \node[v, minimum size=\nodesize] (24) at (1* \wid,3*\hei) {};

            \draw[e] (02) to[in =0, out =0] (03);
            \draw[e] (01) to[in =180, out =180] (02);
            \draw[e] (03) to[in =180, out =180] (04);
            \draw[e] (13) to[in =180, out =180] (14);
            \draw[e] (11) to[out = 180, in = 0] (01);
            \draw[e] (12) to[out = 180, in = 0] (04);

            \draw[e] (13) to[in =0, out =0] (14);
            \draw[e] (23) to[in =180, out =180] (22);
            \draw[e] (21) to[in =0, out =0] (22);
            \draw[e] (23) to[in =0, out =0] (24);
            \draw[e] (11) to[out = 0, in = 180] (21);
            \draw[e] (12) to[out = 0, in = 180] (24);

            \draw[very thick] (0* \wid,-0.25*\hei) -- (0* \wid,3.25*\hei);
            \draw[very thick] (-1* \wid,-0.25*\hei) -- (-1* \wid,3.25*\hei);
            \draw[very thick] (1* \wid,-0.25*\hei) -- (1* \wid,3.25*\hei); 
    \end{tikzpicture}
    \\
    $-$
    \begin{tikzpicture}[x=1.5cm,y=-.5cm,baseline=-0.7cm]
       \def\wid{\standardWidth}
            \def\hei{\standardHeight}
            \def\nodesize{3}

            \node[v, minimum size=\nodesize] (01) at (-1* \wid,0*\hei) {};
            \node[v, minimum size=\nodesize] (02) at (-1* \wid,1*\hei) {};
            \node[v, minimum size=\nodesize] (03) at (-1* \wid,2*\hei) {};
            \node[v, minimum size=\nodesize] (04) at (-1* \wid,3*\hei) {};

            \node[v, minimum size=\nodesize] (11) at (0* \wid,0*\hei) {};
            \node[v, minimum size=\nodesize] (12) at (0* \wid,1*\hei) {};
            \node[v, minimum size=\nodesize] (13) at (0* \wid,2*\hei) {};
            \node[v, minimum size=\nodesize] (14) at (0* \wid,3*\hei) {};

            \node[v, minimum size=\nodesize] (21) at (1* \wid,0*\hei) {};
            \node[v, minimum size=\nodesize] (22) at (1* \wid,1*\hei) {};
            \node[v, minimum size=\nodesize] (23) at (1* \wid,2*\hei) {};
            \node[v, minimum size=\nodesize] (24) at (1* \wid,3*\hei) {};

            \draw[e] (02) to[in =0, out =0] (03);
            \draw[e] (01) to[in =180, out =180] (02);
            \draw[e] (03) to[in =180, out =180] (04);
            \draw[e] (11) to[in =180, out =180] (12);
            \draw[e] (13) to[out = 180, in = 0] (01);
            \draw[e] (14) to[out = 180, in = 0] (04);

            \draw[e] (13) to[in =0, out =0] (14);
            \draw[e] (23) to[in =180, out =180] (22);
            \draw[e] (21) to[in =0, out =0] (22);
            \draw[e] (23) to[in =0, out =0] (24);
            \draw[e] (11) to[out = 0, in = 180] (21);
            \draw[e] (12) to[out = 0, in = 180] (24);

            \draw[very thick] (0* \wid,-0.25*\hei) -- (0* \wid,3.25*\hei);
            \draw[very thick] (-1* \wid,-0.25*\hei) -- (-1* \wid,3.25*\hei);
            \draw[very thick] (1* \wid,-0.25*\hei) -- (1* \wid,3.25*\hei); 
    \end{tikzpicture}
    $-$
    \begin{tikzpicture}[x=1.5cm,y=-.5cm,baseline=-0.7cm]
       \def\wid{\standardWidth}
            \def\hei{\standardHeight}
            \def\nodesize{3}

            \node[v, minimum size=\nodesize] (01) at (-1* \wid,0*\hei) {};
            \node[v, minimum size=\nodesize] (02) at (-1* \wid,1*\hei) {};
            \node[v, minimum size=\nodesize] (03) at (-1* \wid,2*\hei) {};
            \node[v, minimum size=\nodesize] (04) at (-1* \wid,3*\hei) {};

            \node[v, minimum size=\nodesize] (11) at (0* \wid,0*\hei) {};
            \node[v, minimum size=\nodesize] (12) at (0* \wid,1*\hei) {};
            \node[v, minimum size=\nodesize] (13) at (0* \wid,2*\hei) {};
            \node[v, minimum size=\nodesize] (14) at (0* \wid,3*\hei) {};

            \node[v, minimum size=\nodesize] (21) at (1* \wid,0*\hei) {};
            \node[v, minimum size=\nodesize] (22) at (1* \wid,1*\hei) {};
            \node[v, minimum size=\nodesize] (23) at (1* \wid,2*\hei) {};
            \node[v, minimum size=\nodesize] (24) at (1* \wid,3*\hei) {};

            \draw[e] (02) to[in =0, out =0] (03);
            \draw[e] (01) to[in =180, out =180] (02);
            \draw[e] (03) to[in =180, out =180] (04);
            \draw[e] (13) to[in =180, out =180] (14);
            \draw[e] (11) to[out = 180, in = 0] (01);
            \draw[e] (12) to[out = 180, in = 0] (04);

            \draw[e] (11) to[in =0, out =0] (12);
            \draw[e] (23) to[in =180, out =180] (22);
            \draw[e] (21) to[in =0, out =0] (22);
            \draw[e] (23) to[in =0, out =0] (24);
            \draw[e] (13) to[out = 0, in = 180] (21);
            \draw[e] (14) to[out = 0, in = 180] (24);

            \draw[very thick] (0* \wid,-0.25*\hei) -- (0* \wid,3.25*\hei);
            \draw[very thick] (-1* \wid,-0.25*\hei) -- (-1* \wid,3.25*\hei);
            \draw[very thick] (1* \wid,-0.25*\hei) -- (1* \wid,3.25*\hei); 
    \end{tikzpicture}
    .
    \end{center}
\end{theorem}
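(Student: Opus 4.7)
The proof naturally splits into two tasks: verifying that $\varphi$ is a morphism of dgas compatible with both involutions, and verifying that it is a quasi-isomorphism. For the first task, I would directly check $d\varphi(v) = \varphi(dv)$ on each of the four generators $v \in \{x, \hat{x}, r, y\}$ by applying the bar-deleting differential to the pictured systems of loops and tracking signs and factors of $a$ from removed unpinned loops. The check for $y$ requires unpacking the four three-bar diagrams in its image and observing systematic cancellations. Compatibility with $\sigma_{\updownarrow}$ (the identity on the model) and $\sigma_{\leftrightarrow}$ (which swaps $x$ and $\hat{x}$ and fixes $r, y$) is read off by inspecting the diagrams directly.

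For the weak equivalence, I would introduce an auxiliary dga map $\Psi \colon (T_R(x_1, x_3), d) \to (T_R[x, \hat{x}, r, y], d)$ from the \cite{BBRWS} minimal model, given by $x_1 \mapsto x$ and $x_3 \mapsto y + 2 x r$. A short computation using $d(r) = \hat{x} - x$ gives $d(y + 2xr) = 2 x^2$, so $\Psi$ is well-defined. The plan is to establish that both $\Psi$ and $\varphi \circ \Psi$ are quasi-isomorphisms, whence $\varphi$ is one by two-out-of-three.

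That $\Psi$ is a quasi-isomorphism follows from a change of generators. Setting $\hat{u} = \hat{x} - x$ and $\tilde{y} = y + 2 x r$, we rewrite $(T_R[x, \hat{x}, r, y], d)$ as $(T_R[x, \tilde{y}, \hat{u}, r], d)$ with $d(x) = a$, $d(\tilde{y}) = 2 x^2$, $d(\hat{u}) = 0$, $d(r) = \hat{u}$. The differential decouples, so the dga factors as a free product $(T_R[x, \tilde{y}], d) \sqcup (T_R[\hat{u}, r], d)$, and $\Psi$ corresponds to inclusion of the first factor. The second factor is acyclic via an explicit $R$-linear contracting homotopy on its augmentation ideal (send each word whose leftmost $\{\hat{u}, r\}$-letter is $\hat{u}$ to the same word with that $\hat{u}$ replaced by $r$, with the appropriate Koszul sign, and send words whose leftmost such letter is $r$ to zero). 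Since free products of cofibrant dgas preserve quasi-isomorphism in each factor, $\Psi$ is a quasi-isomorphism.

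The principal obstacle is the remaining statement that $\varphi \circ \Psi \colon (T_R(x_1, x_3), d) \to \CPL(4; R, a)$ is a quasi-isomorphism. By \cite{BBRWS}, some dga map $\mu$ from the minimal model is a quasi-isomorphism, and since that paper shows all one-loop diagrams on a single bar are homologous, a standard derivation-homotopy adjustment replaces $\mu$ by a dga-homotopic quasi-isomorphism $\mu'$ agreeing with $\varphi \circ \Psi$ on $x_1$. The difference $\varphi\Psi(x_3) - \mu'(x_3)$ is then a $3$-cycle in $\CPL(4)$, and the heart of the argument is to exhibit it as a $3$-boundary, enabling a second dga homotopy that aligns $\mu'$ with $\varphi \circ \Psi$. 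This is a concrete bounding problem, tractable because $\varphi(y)$ is given as an explicit sum of four diagrams and $\varphi(x)\varphi(r)$ is likewise explicit; alternatively one could compute $H_3(\CPL(4))$ directly from the low-degree generators and verify that $[\varphi \circ \Psi(x_3)]$ represents the correct class.
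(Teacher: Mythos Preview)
Your argument for $\Psi$ (your auxiliary map, which is exactly the paper's $\psi$ from Proposition~1.2) is correct and in fact cleaner than the paper's: the change of variables $\hat u=\hat x-x$, $\tilde y=y+2xr$ really does split the dga as a coproduct $T_R[x,\tilde y]\,*\,T_R[\hat u,r]$, and adjoining the acyclic factor $T_R[\hat u,r]$ is a generating trivial cofibration, so its pushout along $R\to T_R[x,\tilde y]$ is again a weak equivalence. The paper instead proves this by a word-length filtration, but your route is shorter.

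The gap is in the second half. You want to deduce that the \emph{specific} map $\varphi\Psi$ is a quasi-isomorphism from the existence of \emph{some} quasi-isomorphism $\mu$ in \cite{BBRWS}. After aligning on $x_1$, you must show that the $3$-cycle $\varphi\Psi(x_3)-\mu'(x_3)$ is a boundary. But $\mu'(x_3)$ is not given explicitly in \cite{BBRWS} (indeed, the introduction of the present paper says these images ``seem quite mysterious''), so there is no concrete comparison to make. Your fallback, ``compute $H_3(L(4))$ directly from the low-degree generators,'' is not a shortcut: it is exactly the substantive work the paper carries out in Sections~5--6 via the filtration by dividers and the spectral sequence, culminating in the identification of the $E^2$-page as the tensor algebra on $\varphi(x)$ and $\varphi(y)$ (Corollary~5.3). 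Note also that over $(\mathbb Z,0)$ one has $H_3(T_{\mathbb Z}(x_1,x_3),d)\cong\mathbb Z/2$, so the difference being a cycle does not force it to bound; something genuinely has to be checked.

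The paper's proof proceeds quite differently from your outline: it first uses a base-change lemma (Lemma~7.1) to reduce to $(R,a)=(\mathbb Z,0)$, then defines a ``non-dividers'' filtration on $L(4;\mathbb Z,0)$ and a compatible filtration on $T_{\mathbb Z}(x_1,x_3)$, and shows that $\varphi\psi$ induces an isomorphism on the $E^2$-pages of the associated spectral sequences. The identification of the target $E^2$-page is the technical heart (Theorem~5.2), and it is proved independently of \cite{BBRWS}. Your proposal would need either to supply that computation or to find an explicit $\mu'(x_3)$; as written it does neither.
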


We also provide the following comparison with the existing model.

\begin{proposition} \label{prop: model comparison} The map of dgas $\psi \colon (T_R[x_1,x_3],d) \to (T_R[x,\hat{x},r,y],d)$ given by $x_1 \mapsto x$, $x_3 \mapsto y + 2 x r$ is a weak equivalence. \end{proposition}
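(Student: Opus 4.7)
The plan is to reinterpret $\psi$, after a change of variables in the target, as the inclusion of a sub-dga, and then to prove this inclusion is a quasi-isomorphism by exhibiting an explicit contracting homotopy.

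First, I would introduce new generators $w := \hat x - x$ of degree $1$ and $z := y + 2xr$ of degree $3$ in the target. The substitution $\hat x = x+w$, $y = z-2xr$ identifies $T_R[x,\hat x, r, y]$ with $T_R[x,w,r,z]$ as graded algebras, and a Leibniz calculation transports the differential to
\[
d(x)=a, \qquad d(w)=0, \qquad d(r)=w, \qquad d(z)=2x^2.
\]
Under this change of coordinates $\psi$ sends $x_1 \mapsto x$ and $x_3 \mapsto z$, so using the evident isomorphism $T_R[x_1,x_3]\cong T_R[x,z]$ of dgas, $\psi$ is exactly the inclusion of the sub-dga $T_R[x,z] \hookrightarrow T_R[x,w,r,z]$.

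Next, the algebra map $\pi : T_R[x,w,r,z] \to T_R[x,z]$ fixing $x,z$ and killing $w,r$ is immediately seen from the formulas above to be a chain map, and $\pi\circ\psi = \mathrm{id}$. By two-out-of-three, it therefore suffices to show $\pi$ is a quasi-isomorphism.

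To do so, I would construct an explicit chain homotopy $h$ of degree $+1$ on $T_R[x,w,r,z]$ satisfying $dh + hd = \mathrm{id} - \psi\pi$. Each monomial either lies in $T_R[x,z]$ (no occurrence of $w$ or $r$) or contains a unique leftmost letter from $\{w,r\}$. I would set $h$ to vanish on monomials of the first kind and on those whose leftmost $w/r$ letter is an $r$, and send a monomial of the form $uwv$, with $u$ a (possibly empty) monomial in $x,z$ and $v$ an arbitrary monomial, to $(-1)^{|u|}urv$. A direct Leibniz computation, using only $d(w)=0$ and $d(r)=w$, then yields $dh+hd=\mathrm{id}-\psi\pi$.

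The main (and essentially only) obstacle is the sign-tracking in this last computation; the change of variables, the definition of $\pi$, and the chain-map checks are all immediate.
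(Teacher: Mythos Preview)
Your argument is correct, and it takes a genuinely different (and more direct) route than the paper. Both proofs begin with the same change of basis in the target, replacing $y$ by $z = y + 2xr$ (the paper calls this $y'$), so that $\psi$ becomes the inclusion $T_R[x,z] \hookrightarrow T_R[x,\hat{x},r,z]$. From there the paper first invokes a base-change lemma to reduce to $(R,a)=(\mathbb{Z},0)$, then filters both sides by the number of occurrences of $x_3$ (respectively $z$); on filtration quotients these letters become cycles, and the induced map is identified with tensor powers of the restriction $T_{\mathbb{Z}}[x_1]\to T_{\mathbb{Z}}[x,\hat{x},r]$, which is finally shown to be a quasi-isomorphism by a word-length / K\"unneth argument. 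Your approach instead stays with general $(R,a)$ throughout, introduces the further variable $w=\hat{x}-x$ to make the acyclic piece $(w,r)$ with $d(r)=w$, $d(w)=0$ manifest, and then writes down an explicit retraction $\pi$ and chain homotopy $h$ realising $\mathrm{id}-\psi\pi$ as a boundary. This is more elementary---no spectral sequences, no reduction to the universal case---and has the pleasant feature of yielding an explicit deformation retraction; the paper's approach, while less explicit, fits its overall filtration/spectral-sequence methodology and reuses machinery already set up for the main theorem. Your sign bookkeeping checks out (the key point being that $|r|=2$ contributes no sign in $d(rv)=wv + r\,d(v)$, so the cross terms involving $d(u)$ and $d(v)$ cancel in pairs).
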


It follows that the composite $\varphi \circ \psi$ is (one possible choice of) the model established in \cite{BBRWS}, so diagram generators can be read off. The homology of $(T_R[x_1,x_3],d)$ was computed there for certain $(R,a)$. For example, for $(\mathbb{F}_2,0)$ the differential is trivial, so the homology is just $T_{\mathbb{F}_2}[x_1,x_3]$, but for $(\mathbb{Q},0)$ the homology is $\mathbb{Q}[\alpha] \otimes \Lambda[x_1]$, where $\alpha=x_1 x_3 + x_3 x_1$ \cite[Corollary 1.2(c)]{BBRWS}.

The involution $\sigma_{\updownarrow}$ acts trivially on our model, hence also on homology with any coefficients. Over $\mathbb{F}_2$, the anti-involution $\sigma_{\leftrightarrow}$ fixes the homology generators $x_1$ and $x_3$, but acts nontrivially because it reverses word order. Over $\mathbb{Q}$, direct calculation shows that $\sigma_{\leftrightarrow}\psi(\alpha)$ and $\psi(\alpha)$ differ by the boundary $d(ry-yr + 2r^2 \hat{x} - 2x r^2)$, so the homology action is trivial (the algebra being commutative in this case).

\section*{Acknowledgements}

Thanks to Richard Hepworth-Young for productive early discussions, and to the anonymous referee for many thoughtful comments. This work was supported by the ERC (grant no. 950048), and by the NWO (VI.Veni.242.230).

\section{Temperley--Lieb diagrams, cell modules, ideals} \label{section:basicDefinitions}

Fix a commutative ring $R$ and an element $a \in R$. The \emph{Temperley--Lieb category} $\TL = \TL(R,a)$ is the $R$-linear category where the objects are non-negative integers $n$ and the morphism modules $\TL(n,m)$ are free $R$-modules on \emph{Temperley--Lieb $(n,m)$ diagrams}. Place $n$ vertices on the left edge of the unit square and $m$ on the right. A \emph{Temperley--Lieb $(n,m)$ diagram} consists of $\frac{n+m}{2}$ nonintersecting arcs which connect the vertices in pairs. The arcs are not oriented, and are considered up to isotopy, so two diagrams are regarded as the same if they give the same pairing of the vertices.

Composition of morphisms is given by concatenation, replacing loops by factors of $a$. An example composition $\TL(4,2) \otimes_R \TL(2,0) \to \TL(4,0)$ is shown below.

\begin{center}
\begin{tikzpicture}[x=1.5cm,y=-.5cm,baseline=-.7cm]

\def\wid{\standardWidth}
\def\hei{\standardHeight}
\def\nodesize{3}

\node[v, minimum size=\nodesize] (11) at (0* \wid,0*\hei) {};
\node[v, minimum size=\nodesize] (12) at (0* \wid,1*\hei) {};
\node[v, minimum size=\nodesize] (13) at (0* \wid,2*\hei) {};
\node[v, minimum size=\nodesize] (14) at (0* \wid,3*\hei) {};

\node[v, minimum size=\nodesize] (22) at (1* \wid,1*\hei) {};
\node[v, minimum size=\nodesize] (23) at (1* \wid,2*\hei) {};

\draw[e] (12) to[in =0, out =0] (13);
\draw[e] (11) to[out = 0, in = 180] (22);
\draw[e] (14) to[out = 0, in = 180] (23);

\draw[very thick] (0* \wid,-0.25*\hei) -- (0* \wid,3.25*\hei);
\draw[very thick] (1* \wid,-0.25*\hei) -- (1* \wid,3.25*\hei);
\end{tikzpicture}
$\cdot$
\begin{tikzpicture}[x=1.5cm,y=-.5cm,baseline=-.7cm]

\def\wid{\standardWidth}
\def\hei{\standardHeight}
\def\nodesize{3}

\node[v, minimum size=\nodesize] (12) at (0* \wid,1*\hei) {};
\node[v, minimum size=\nodesize] (13) at (0* \wid,2*\hei) {};

\draw[e] (12) to[in =0, out =0] (13);

\draw[very thick] (0* \wid,-0.25*\hei) -- (0* \wid,3.25*\hei);
\end{tikzpicture}
\quad
$=$
\quad
\begin{tikzpicture}[x=1.5cm,y=-.5cm,baseline=-.7cm]

\def\wid{\standardWidth}
\def\hei{\standardHeight}
\def\nodesize{3}

\node[v, minimum size=\nodesize] (11) at (0* \wid,0*\hei) {};
\node[v, minimum size=\nodesize] (12) at (0* \wid,1*\hei) {};
\node[v, minimum size=\nodesize] (13) at (0* \wid,2*\hei) {};
\node[v, minimum size=\nodesize] (14) at (0* \wid,3*\hei) {};

\draw[e] (12) to[in =0, out =0] (13);
\draw[e] (11) to[out = 0, in = 0] (14);
\draw[very thick] (0* \wid,-0.25*\hei) -- (0* \wid,3.25*\hei);
\end{tikzpicture}
\end{center}

The \emph{Temperley--Lieb algebra} $\TL_n = \TL_n(R,a)$ is the endomorphism algebra $\mathrm{End}_{\TL}(n)$. By construction, $\TL(n,m)$ is a $\TL_n$-$\TL_m$ bimodule.

\subsection{Cell modules} The \emph{cell module} $S(n,k)$ is the left $\TL_n$-module obtained as the quotient of $\TL(n,k)$ by all diagrams having a right-to-right connection. In other words, $S(n,k)$ has a basis consisting of all $(n,k)$-diagrams where every vertex on the right is attached to one on the left. In the case $k = 0$, this condition is vacuous, so we get a canonical isomorphism $S(n,0) \cong \TL(n,0)$.

We also define the \emph{opposite cell module} $S^\vee(k,n)$ to be the \emph{right} $\TL_n$-module obtained as the quotient of $\TL(k,n)$ by all diagrams having a left-to-left connection.

\subsection{Ideals} Fix $n$, and write $I_{\leq k}$ for the two-sided ideal of $\TL_n$ consisting of diagrams with at most $k$ left-to-right connections. Write $I_k$ for the quotient module $\faktor{I_{\leq k}}{I_{\leq k-1}}$, which is spanned by diagrams with precisely $k$ left-to-right connections.

Slicing down the middle of a diagram in $I_k$ gives a `left half' in $S(n,k)$, called the \emph{left link state}, and a `right half' in $S^\vee(k,n)$, called the \emph{right link state}. Graphically, this looks as follows:
\begin{center}
Slicing through
\begin{tikzpicture}[x=1.5cm,y=-.5cm,baseline=-0.7cm]
\def\wid{\standardWidth}
\def\hei{\standardHeight}
\def\nodesize{3}
\def\ang{90}

\node[v, minimum size=\nodesize] (11) at (0* \wid,0*\hei) {};
\node[v, minimum size=\nodesize] (12) at (0* \wid,1*\hei) {};
\node[v, minimum size=\nodesize] (13) at (0* \wid,2*\hei) {};
\node[v, minimum size=\nodesize] (14) at (0* \wid,3*\hei) {};

\node[v, minimum size=\nodesize] (21) at (1* \wid,0*\hei) {};
\node[v, minimum size=\nodesize] (22) at (1* \wid,1*\hei) {};
\node[v, minimum size=\nodesize] (23) at (1* \wid,2*\hei) {};
\node[v, minimum size=\nodesize] (24) at (1* \wid,3*\hei) {};

\draw[e] (12) to[in =0, out =0] (13);
\draw[e] (11) to[out = 0, in = 180] (23);
\draw[e] (14) to[out = 0, in = 180] (24);
\draw[e] (21) to[out = 180, in = 180] (22);

\draw[very thick] (0* \wid,-0.25*\hei) -- (0* \wid,3.25*\hei);
\draw[very thick] (1* \wid,-0.25*\hei) -- (1* \wid,3.25*\hei);
\end{tikzpicture}
\quad
gives link states
\quad
\begin{tikzpicture}[x=1.5cm,y=-.5cm,baseline=-0.7cm]
\def\wid{\standardWidth}
\def\hei{\standardHeight}
\def\nodesize{3}
\def\ang{90}

\node[v, minimum size=\nodesize] (11) at (0* \wid,0*\hei) {};
\node[v, minimum size=\nodesize] (12) at (0* \wid,1*\hei) {};
\node[v, minimum size=\nodesize] (13) at (0* \wid,2*\hei) {};
\node[v, minimum size=\nodesize] (14) at (0* \wid,3*\hei) {};

\node[] (1h1) at (0.5* \wid,1*\hei) {};
\node[] (1h2) at (0.5* \wid,2*\hei) {};

\draw[e] (12) to[in =0, out =0] (13);
\draw[e] (11) to[out = 0, in = 180] (1h1);
\draw[e] (14) to[out = 0, in = 180] (1h2);

\draw[very thick] (0* \wid,-0.25*\hei) -- (0* \wid,3.25*\hei);
\end{tikzpicture}
 and 
\begin{tikzpicture}[x=1.5cm,y=-.5cm,baseline=-0.7cm]
\def\wid{\standardWidth}
\def\hei{\standardHeight}
\def\nodesize{3}
\def\ang{90}

\node[] (1h1) at (0.5* \wid,1*\hei) {};
\node[] (1h2) at (0.5* \wid,2*\hei) {};

\node[v, minimum size=\nodesize] (21) at (1* \wid,0*\hei) {};
\node[v, minimum size=\nodesize] (22) at (1* \wid,1*\hei) {};
\node[v, minimum size=\nodesize] (23) at (1* \wid,2*\hei) {};
\node[v, minimum size=\nodesize] (24) at (1* \wid,3*\hei) {};

\draw[e] (1h1) to[out = 0, in = 180] (23);
\draw[e] (1h2) to[out = 0, in = 180] (24);
\draw[e] (21) to[out = 180, in = 180] (22);

\draw[very thick] (1* \wid,-0.25*\hei) -- (1* \wid,3.25*\hei);
\end{tikzpicture}
.
\end{center}
We can reconstruct the original diagram from these two halves (since there is only one planar way to join the loose ends up) so we have the following.

\begin{lemma} \label{lem:RtensorIso}
    Slicing diagrams into their left and right link states gives an isomorphism of $\TL_n$-bimodules $I_k \cong S(n,k) \otimes_R S^{\vee}(k,n)$. \qed
\end{lemma}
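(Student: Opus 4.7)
The plan is to produce mutually inverse bimodule maps by slicing and concatenation.

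First, I would define $\Phi \colon I_k \to S(n,k) \otimes_R S^{\vee}(k,n)$ on basis elements. A basis element of $I_k$ is a diagram $D$ with exactly $k$ left-to-right strands. Slicing vertically through the middle of $D$ introduces $k$ new vertices on the slice line; the left half $D_L$ is an $(n,k)$-diagram whose $k$ new vertices are each attached to a left vertex (since the strands they were cut from ran all the way across), hence $D_L$ has no right-to-right connection and so represents a basis element of $S(n,k)$. Symmetrically $D_R$ is a basis element of $S^{\vee}(k,n)$. Set $\Phi(D) = D_L \otimes D_R$ and extend $R$-linearly.

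Second, I would define the inverse $\Psi$ by concatenation: given a basis pair $(L,M)$ in $S(n,k) \otimes_R S^{\vee}(k,n)$, glue $L$ and $M$ along their $k$ common middle vertices. Each middle vertex of $L$ is attached to a left vertex and each middle vertex of $M$ to a right vertex, so each glued pair produces a single strand running from left to right. Hence the concatenation has no loops and exactly $k$ left-to-right strands, giving a basis element of $I_k$. Since the link states are defined precisely so that the planar matching on each side is determined, $\Phi$ and $\Psi$ are manifestly mutually inverse on bases and so give an $R$-module isomorphism.

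Finally, I would verify $\TL_n$-bimodule equivariance. Left multiplication by an element of $\TL_n$ on a diagram representing $D \in I_k$ stacks a new diagram on the left, which after slicing affects only $D_L$, and similarly right multiplication affects only $D_R$. Under $\Phi$ this is exactly the tensor of the standard left action on $S(n,k)$ and right action on $S^{\vee}(k,n)$. The main (though mild) point requiring care is that left multiplication can reduce the number of left-to-right strands, either by producing a right-to-right connection in the new $D_L$ or by producing closed loops that are replaced by factors of $a$; the resulting terms with fewer than $k$ left-to-right strands vanish in $I_k = I_{\leq k}/I_{\leq k-1}$, and correspondingly those with a right-to-right connection vanish in $S(n,k)$ by definition, while the loop-to-$a$ conversion is identical on both sides. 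Thus both sides reduce consistently, and the $R$-module isomorphism is a map of $\TL_n$-bimodules.
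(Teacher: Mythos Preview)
Your proof is correct and follows the same approach as the paper, which treats the lemma as immediate from the preceding discussion (the \qed is placed directly after the statement): the paper simply observes that the original diagram can be reconstructed from its two link states ``since there is only one planar way to join the loose ends up''. Your version spells out the inverse explicitly and checks bimodule equivariance, including the reduction modulo $I_{\leq k-1}$, which is more detail than the paper provides but entirely in the same spirit.
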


We will use this lemma primarily in the case $k=0$. It is really an instance of Graham and Lehrer's \emph{cellular} philosophy \cite{GrahamLehrer}, but we will not think explicitly in these terms. For more detail, see their paper, as well as \cite{RidoutSaintAubin}.

Using \cref{lem:RtensorIso}, given a diagram $\beta \in I_k$, we may write $$\beta = \beta^L \otimes \beta^R,$$ decomposing $\beta$ into its link states. This isomorphism will be central in what follows: we will usually use it graphically, but the reader may sometimes wish to use \cref{lem:RtensorIso} to spell something out in more algebraic terms.

\section{The reduced complex of loops} \label{sectionBar}

As before, let $R$ be commutative, and let $a \in R$. Let $2n \geq 4$ be even. Pictures will be drawn in the case $2n=4$. Unadorned tensor products are over $R$. Note that there is exactly one Temperley--Lieb $(0,0)$-diagram, namely the empty diagram $\emptyset$.

By definition \cite[Definition 2.4]{BBRWS}, the complex of planar loops is obtained from the shifted bar construction $\mathrm{Bar}_{*-1}(\TL(0,2n), \TL_{2n}, \TL(2n,0))$ by adding an augmentation given by the composition map $\TL(0,2n) \otimes \TL(2n,0) \xrightarrow{c} \TL(0,0)  = R\{ \emptyset \}$. In this paper we will use the reduced version of this bar construction, without augmentation. Recall that $\TL(2n,0) \cong S(2n,0)$ and $\TL(0,2n) \cong S^\vee(0,2n)$.

\begin{definition} The \emph{reduced complex of planar loops} $\CPLr(2n)=\CPLr(2n;R,a)$ is the reduced bar construction $\overline{\mathrm{Bar}}_{*-1}(S^{\vee}(0,2n),\TL_{2n},S(2n,0))$. Explicitly, in degree $p \geq 1$, the underlying $R$-module is:
$$\CPLr(2n)_p = S^{\vee}(0,2n) \otimes \overline{\TL}_{2n}^{\otimes p-1} \otimes S(2n,0),$$
where $\overline{\TL}_{2n} = I_{\leq 2n-1} \subset \TL_{2n}$ has $R$-basis all diagrams other than the identity. The module $\CPLr(2n)_p$ therefore has an $R$-basis consisting of elements of the form
$$\beta_{0} \lvert \beta_1 \lvert \dots \lvert \beta_{p-1} \lvert \beta_{p} : = \beta_0 \otimes \beta_1 \otimes \dots \otimes \beta_{p-1} \otimes \beta_p,$$
where $\beta_{0} \in S^\vee(0,2n)$, $\beta_1, \dots ,\beta_{p-1} \in \overline{\TL}_{2n}$, and $\beta_{p} \in S(2n,0)$. The differential $\CPLr(2n)_p \xrightarrow{d} \CPLr(2n)_{p-1}$ is the alternating sum $d(x) = \sum_{i=1}^p (-1)^i d_i(x)$ of face maps $d_i \colon \CPLr(2n)_p \to \CPLr(2n)_{p-1},$ with
$$d_i(\beta_0 \lvert \beta_1 \lvert \dots \lvert \beta_{p-1} \lvert \beta_p) = 
    \beta_0 \lvert \beta_1 \lvert \dots \lvert \beta_{i-1} \beta_{i} \lvert \dots \lvert \beta_{p-1} \lvert \beta_p .$$ 
\end{definition}

We will review the multiplicative structure on this chain complex in \cref{section: product and grading}. The Temperley--Lieb algebra and the dga of planar loops both depend on a chosen ground ring $R$ and an element $a \in R$: in other words, on a pointed ring $(R,a)$. In fact, to prove our result, it will suffice to consider the case $(\mathbb{Z},0)$. We will make this precise at the start of \cref{section:basechange}, in particular in \cref{lemma: reduction techniques}. With this in mind, in \cref{section: product and grading,section: spectral sequence,section:filtrationByDividers,section: mainTechnical} it will be a standing assumption that $a=0$.

\begin{remark} \label{rmk: reduced is same} When $a=0$, the augmentation $c$ of $\CPL(2n)$ is zero, so $\CPLr(2n)$ is a subcomplex of $\CPL(2n)$. The inclusion of the reduced bar construction in the full bar construction is a weak equivalence, so in this case $H_*(\CPL(2n)) \cong H_*(\CPLr(2n)) \oplus R\{\emptyset\}$. We will take advantage of this to instead study the smaller complex $\CPLr(2n)$. \end{remark}

Graphically, we will think of the face maps $d_i$ as \emph{bar deletions}. For the diagram from \cref{fig:first diagram} (a), the action of the face map $d_0$ looks as follows:
\begin{center}
$d_0$ :
\begin{tikzpicture}[x=1.5cm,y=-.5cm,baseline=-.7cm]
\def\wid{\standardWidth}
\def\hei{\standardHeight}
\def\nodesize{3}
\def\ang{90}

\node[v, minimum size=\nodesize] (11) at (0* \wid,0*\hei) {};
\node[v, minimum size=\nodesize] (12) at (0* \wid,1*\hei) {};
\node[v, minimum size=\nodesize] (13) at (0* \wid,2*\hei) {};
\node[v, minimum size=\nodesize] (14) at (0* \wid,3*\hei) {};

\draw[e] (11) to[in =180, out =180] (12);
\draw[e] (13) to[in =180, out =180] (14);

\node[v, minimum size=\nodesize] (21) at (1* \wid,0*\hei) {};
\node[v, minimum size=\nodesize] (22) at (1* \wid,1*\hei) {};
\node[v, minimum size=\nodesize] (23) at (1* \wid,2*\hei) {};
\node[v, minimum size=\nodesize] (24) at (1* \wid,3*\hei) {};

\draw[e] (12) to[in =0, out =0] (13);
\draw[e] (11) to[out = 0, in = 180] (23);
\draw[e] (14) to[out = 0, in = 180] (24);
\draw[e] (21) to[out = 180, in = 180] (22);

\node[v, minimum size=\nodesize] (31) at (2* \wid,0*\hei) {};
\node[v, minimum size=\nodesize] (32) at (2* \wid,1*\hei) {};
\node[v, minimum size=\nodesize] (33) at (2* \wid,2*\hei) {};
\node[v, minimum size=\nodesize] (34) at (2* \wid,3*\hei) {};

\draw[e] (21) to[in =0, out =0] (22);
\draw[e] (23) to[out = 0, in = 180] (31);
\draw[e] (24) to[out = 0, in = 180] (32);
\draw[e] (33) to[out = 180, in = 180] (34);

\draw[e] (31) to[in =0, out =0] (34);
\draw[e] (32) to[in =0, out =0] (33);

\draw[very thick] (0* \wid,-0.25*\hei) -- (0* \wid,3.25*\hei);
\draw[very thick] (1* \wid,-0.25*\hei) -- (1* \wid,3.25*\hei);
\draw[very thick] (2* \wid,-0.25*\hei) -- (2* \wid,3.25*\hei);
\end{tikzpicture}
\quad
$\mapsto$
\quad
\begin{tikzpicture}[x=1.5cm,y=-.5cm,baseline=-.7cm]

\def\wid{\standardWidth}
\def\hei{\standardHeight}
\def\nodesize{3}
\def\ang{90}

\node (11) at (0* \wid,0*\hei) {};
\node (12) at (0* \wid,1*\hei) {};
\node (13) at (0* \wid,2*\hei) {};
\node (14) at (0* \wid,3*\hei) {};

\draw[e] (11.center) to[in =180, out =180] (12.center);
\draw[e] (13.center) to[in =180, out =180] (14.center);

\node[v, minimum size=\nodesize] (21) at (1* \wid,0*\hei) {};
\node[v, minimum size=\nodesize] (22) at (1* \wid,1*\hei) {};
\node[v, minimum size=\nodesize] (23) at (1* \wid,2*\hei) {};
\node[v, minimum size=\nodesize] (24) at (1* \wid,3*\hei) {};

\draw[e] (12.center) to[in =0, out =0] (13.center);
\draw[e] (11.center) to[out = 0, in = 180] (23);
\draw[e] (14.center) to[out = 0, in = 180] (24);
\draw[e] (21) to[out = 180, in = 180] (22);

\node[v, minimum size=\nodesize] (31) at (2* \wid,0*\hei) {};
\node[v, minimum size=\nodesize] (32) at (2* \wid,1*\hei) {};
\node[v, minimum size=\nodesize] (33) at (2* \wid,2*\hei) {};
\node[v, minimum size=\nodesize] (34) at (2* \wid,3*\hei) {};

\draw[e] (21) to[in =0, out =0] (22);
\draw[e] (23) to[out = 0, in = 180] (31);
\draw[e] (24) to[out = 0, in = 180] (32);
\draw[e] (33) to[out = 180, in = 180] (34);

\draw[e] (31) to[in =0, out =0] (34);
\draw[e] (32) to[in =0, out =0] (33);

\draw[very thick] (1* \wid,-0.25*\hei) -- (1* \wid,3.25*\hei);
\draw[very thick] (2* \wid,-0.25*\hei) -- (2* \wid,3.25*\hei);
\end{tikzpicture}
\quad \\
=
\begin{tikzpicture}[x=1.5cm,y=-.5cm,baseline=-.7cm]
\def\wid{\standardWidth}
\def\hei{\standardHeight}
\def\nodesize{3}
\def\ang{90}

\node[v, minimum size=\nodesize] (21) at (1* \wid,0*\hei) {};
\node[v, minimum size=\nodesize] (22) at (1* \wid,1*\hei) {};
\node[v, minimum size=\nodesize] (23) at (1* \wid,2*\hei) {};
\node[v, minimum size=\nodesize] (24) at (1* \wid,3*\hei) {};

\draw[e] (24) to[out = 180, in = 180] (23);
\draw[e] (21) to[out = 180, in = 180] (22);

\node[v, minimum size=\nodesize] (31) at (2* \wid,0*\hei) {};
\node[v, minimum size=\nodesize] (32) at (2* \wid,1*\hei) {};
\node[v, minimum size=\nodesize] (33) at (2* \wid,2*\hei) {};
\node[v, minimum size=\nodesize] (34) at (2* \wid,3*\hei) {};

\draw[e] (21) to[in =0, out =0] (22);
\draw[e] (23) to[out = 0, in = 180] (31);
\draw[e] (24) to[out = 0, in = 180] (32);
\draw[e] (33) to[out = 180, in = 180] (34);

\draw[e] (31) to[in =0, out =0] (34);
\draw[e] (32) to[in =0, out =0] (33);

\draw[very thick] (1* \wid,-0.25*\hei) -- (1* \wid,3.25*\hei);
\draw[very thick] (2* \wid,-0.25*\hei) -- (2* \wid,3.25*\hei);
\end{tikzpicture}
,
\end{center}
and the effect of the total boundary map $d$ on this element is shown in \cref{fig:first diagram}.

We will talk a lot about the basis elements of $\CPLr(2n)$. In the introduction, we called these `systems of planar loops (of height $2n$) pinned by $p$ bars'; it will be useful to have a shorter name.

\begin{definition} \label{def: graffiti}
We will call a basis element in $\CPLr(2n)$ a \emph{(closed) graffito}. In the case $2n=4$, we will say that a \emph{left-open graffito} is a diagram which has two hanging edges at the left end, such as:
\begin{center}
\begin{tikzpicture}[x=1.5cm,y=-.5cm,baseline=-1.05cm]

\def\wid{\standardWidth}
\def\hei{\standardHeight}
\def\nodesize{3}
\def\ang{90}

\node[v, minimum size=\nodesize] (11) at (0* \wid,0*\hei) {};
\node[v, minimum size=\nodesize] (12) at (0* \wid,1*\hei) {};
\node[v, minimum size=\nodesize] (13) at (0* \wid,2*\hei) {};
\node[v, minimum size=\nodesize] (14) at (0* \wid,3*\hei) {};

\node (h1) at (-0.5* \wid,1*\hei) {};
\node (h2) at (-0.5* \wid,2*\hei) {};

\draw[e] (11) to[in =180, out =180] (12);
\draw[e] (13) to[in =0, out =180] (h1);
\draw[e] (14) to[in =0, out =180] (h2);

\node[v, minimum size=\nodesize] (21) at (1* \wid,0*\hei) {};
\node[v, minimum size=\nodesize] (22) at (1* \wid,1*\hei) {};
\node[v, minimum size=\nodesize] (23) at (1* \wid,2*\hei) {};
\node[v, minimum size=\nodesize] (24) at (1* \wid,3*\hei) {};

\draw[e] (12) to[in =0, out =0] (13);
\draw[e] (11) to[out = 0, in = 180] (23);
\draw[e] (14) to[out = 0, in = 180] (24);
\draw[e] (21) to[out = 180, in = 180] (22);

\node[v, minimum size=\nodesize] (31) at (2* \wid,0*\hei) {};
\node[v, minimum size=\nodesize] (32) at (2* \wid,1*\hei) {};
\node[v, minimum size=\nodesize] (33) at (2* \wid,2*\hei) {};
\node[v, minimum size=\nodesize] (34) at (2* \wid,3*\hei) {};

\draw[e] (21) to[in =0, out =0] (22);
\draw[e] (23) to[out = 0, in = 180] (31);
\draw[e] (24) to[out = 0, in = 180] (32);
\draw[e] (33) to[out = 180, in = 180] (34);

\draw[e] (31) to[in =0, out =0] (34);
\draw[e] (32) to[in =0, out =0] (33);

\draw[very thick] (0* \wid,-0.25*\hei) -- (0* \wid,3.25*\hei);
\draw[very thick] (1* \wid,-0.25*\hei) -- (1* \wid,3.25*\hei);
\draw[very thick] (2* \wid,-0.25*\hei) -- (2* \wid,3.25*\hei);
\end{tikzpicture}
\end{center}
Similarly, we define \emph{right} and \emph{two-sided} open graffiti. These correspond to the bar construction where one or both of the modules has been replaced with the cell module $S(4,2)$ (or its opposite) - in particular, the differential in these complexes gives a factor of $a$ for each loop that becomes unpinned, and gives 0 if a non-loop becomes unpinned. 
\end{definition}

We will refer to all of these bar constructions as \emph{complexes of loops}.

\begin{remark} \label{rmk:SES} Closing up ends, that is, the map $S(4,2) \to S(4,0)$ which sends a link state to the link state obtained by joining up the two hanging connections, is evidently a surjection, and we obtain a short exact sequence of $\TL_4$-modules
$$0 \to K \to S(4,2) \to S(4,0) \to 0.$$
 Reflecting diagrams left-to-right, we get a dual short exact sequence
$$0 \to K^\vee \to S^\vee(2,4) \to S^\vee(0,4) \to 0.$$
One can identify the kernel $K$ as (isomorphic to) the trivial module $\t$, showing that this is the case $2n=4$ of the \emph{complex of outermost cups} defined as \cite[Definition 5.9]{BBRWS}). We will not do this.
\end{remark}

\section{The juxtaposition product and counting loops} \label{section: product and grading}

In this section we assume that $a=0$. Juxtaposing diagrams, or, more formally, the identification $$S^{\vee}(0,2n) \otimes_R S(2n,0) \cong I_0 \subset I_{\leq 2n-1} = \overline{\TL}_{2n}$$ of \cref{lem:RtensorIso}, defines a graded product on $\CPLr(2n)$, which gives it the structure of a (nonunital) dga, and which looks as follows.

\begin{center}
If $u=$
\begin{tikzpicture}[x=1.5cm,y=-.5cm,baseline=-0.7cm]

\def\wid{\standardWidth}
\def\hei{\standardHeight}
\def\nodesize{3}
\def\ang{90}

\node[v, minimum size=\nodesize] (11) at (0* \wid,0*\hei) {};
\node[v, minimum size=\nodesize] (12) at (0* \wid,1*\hei) {};
\node[v, minimum size=\nodesize] (13) at (0* \wid,2*\hei) {};
\node[v, minimum size=\nodesize] (14) at (0* \wid,3*\hei) {};

\node[v, minimum size=\nodesize] (21) at (1* \wid,0*\hei) {};
\node[v, minimum size=\nodesize] (22) at (1* \wid,1*\hei) {};
\node[v, minimum size=\nodesize] (23) at (1* \wid,2*\hei) {};
\node[v, minimum size=\nodesize] (24) at (1* \wid,3*\hei) {};

\node[v, minimum size=\nodesize] (31) at (2* \wid,0*\hei) {};
\node[v, minimum size=\nodesize] (32) at (2* \wid,1*\hei) {};
\node[v, minimum size=\nodesize] (33) at (2* \wid,2*\hei) {};
\node[v, minimum size=\nodesize] (34) at (2* \wid,3*\hei) {};

\draw[e] (11) to[in =180, out =180] (12);
\draw[e] (13) to[in =180, out =180] (14);
\draw[e] (12) to[in =0, out =0] (13);
\draw[e] (11) to[out = 0, in = 180] (23);
\draw[e] (14) to[out = 0, in = 180] (24);
\draw[e] (21) to[out = 180, in = 180] (22);
\draw[e] (21) to[in =0, out =0] (22);
\draw[e] (23) to[out = 0, in = 180] (31);
\draw[e] (24) to[out = 0, in = 180] (32);
\draw[e] (33) to[out = 180, in = 180] (34);
\draw[e] (31) to[in =0, out =0] (34);
\draw[e] (32) to[in =0, out =0] (33);

\draw[very thick] (0* \wid,-0.25*\hei) -- (0* \wid,3.25*\hei);
\draw[very thick] (1* \wid,-0.25*\hei) -- (1* \wid,3.25*\hei);
\draw[very thick] (2* \wid,-0.25*\hei) -- (2* \wid,3.25*\hei);
\end{tikzpicture}
 $\in  \CPLr(4)_3$, \quad and $v = $
\begin{tikzpicture}[x=1.5cm,y=-.5cm,baseline=-0.7cm]

\def\wid{\standardWidth}
\def\hei{\standardHeight}
\def\nodesize{3}
\def\ang{90}

\node[v, minimum size=\nodesize] (41) at (3* \wid,0*\hei) {};
\node[v, minimum size=\nodesize] (42) at (3* \wid,1*\hei) {};
\node[v, minimum size=\nodesize] (43) at (3* \wid,2*\hei) {};
\node[v, minimum size=\nodesize] (44) at (3* \wid,3*\hei) {};

\draw[e] (41) to[in =180, out =180] (44);
\draw[e] (42) to[in =180, out =180] (43);
\draw[e] (41) to[in =0, out =0] (42);
\draw[e] (44) to[in =0, out =0] (43);

\draw[very thick] (3* \wid,-0.25*\hei) -- (3* \wid,3.25*\hei);
\end{tikzpicture}
$ \in \CPLr(4)_1$,
\end{center}

\begin{center}
then the product $uv$ is the graffito
\begin{tikzpicture}[x=1.5cm,y=-.5cm,baseline=-0.7cm]

\def\wid{\standardWidth}
\def\hei{\standardHeight}
\def\nodesize{3}
\def\ang{90}

\node[v, minimum size=\nodesize] (11) at (0* \wid,0*\hei) {};
\node[v, minimum size=\nodesize] (12) at (0* \wid,1*\hei) {};
\node[v, minimum size=\nodesize] (13) at (0* \wid,2*\hei) {};
\node[v, minimum size=\nodesize] (14) at (0* \wid,3*\hei) {};

\node[v, minimum size=\nodesize] (21) at (1* \wid,0*\hei) {};
\node[v, minimum size=\nodesize] (22) at (1* \wid,1*\hei) {};
\node[v, minimum size=\nodesize] (23) at (1* \wid,2*\hei) {};
\node[v, minimum size=\nodesize] (24) at (1* \wid,3*\hei) {};

\node[v, minimum size=\nodesize] (31) at (2* \wid,0*\hei) {};
\node[v, minimum size=\nodesize] (32) at (2* \wid,1*\hei) {};
\node[v, minimum size=\nodesize] (33) at (2* \wid,2*\hei) {};
\node[v, minimum size=\nodesize] (34) at (2* \wid,3*\hei) {};

\node[v, minimum size=\nodesize] (41) at (3* \wid,0*\hei) {};
\node[v, minimum size=\nodesize] (42) at (3* \wid,1*\hei) {};
\node[v, minimum size=\nodesize] (43) at (3* \wid,2*\hei) {};
\node[v, minimum size=\nodesize] (44) at (3* \wid,3*\hei) {};

\draw[e] (11) to[in =180, out =180] (12);
\draw[e] (13) to[in =180, out =180] (14);
\draw[e] (12) to[in =0, out =0] (13);
\draw[e] (11) to[out = 0, in = 180] (23);
\draw[e] (14) to[out = 0, in = 180] (24);
\draw[e] (21) to[out = 180, in = 180] (22);
\draw[e] (21) to[in =0, out =0] (22);
\draw[e] (23) to[out = 0, in = 180] (31);
\draw[e] (24) to[out = 0, in = 180] (32);
\draw[e] (33) to[out = 180, in = 180] (34);
\draw[e] (31) to[in =0, out =0] (34);
\draw[e] (32) to[in =0, out =0] (33);

\draw[e] (41) to[in =180, out =180] (44);
\draw[e] (42) to[in =180, out =180] (43);
\draw[e] (41) to[in =0, out =0] (42);
\draw[e] (44) to[in =0, out =0] (43);

\draw[very thick] (0* \wid,-0.25*\hei) -- (0* \wid,3.25*\hei);
\draw[very thick] (1* \wid,-0.25*\hei) -- (1* \wid,3.25*\hei);
\draw[very thick] (2* \wid,-0.25*\hei) -- (2* \wid,3.25*\hei);
\draw[very thick] (3* \wid,-0.25*\hei) -- (3* \wid,3.25*\hei);
\end{tikzpicture}
$\in \CPLr(4)_4$.
\end{center}

\begin{remark} In \cite[Definition 2.4]{BBRWS} it was shown that we always have such a product on the larger complex $\CPL(2n)$. When $a=0$ we saw in \cref{rmk: reduced is same} that $\CPLr(2n)$ is a subcomplex of $\CPL(2n)$, and the product evidently restricts.
\end{remark}

The number of loops which appear in a graffito cannot change under any face map (if one of them disappeared, we would get a factor of $a=0$). This tells us that the complex $\CPLr(2n)_*$ decomposes as a direct sum over the number of loops. Write $\#(x)$ for the number of loops in a graffito $x$, and define $\CPLr(2n)_{*,w}$ to be the subcomplex of $\CPLr(2n)_*$ on graffiti $x$ with $\#(x) = w$, so that
$\CPLr(2n)_* \cong \bigoplus_{w \geq 1} \CPLr(2n)_{*,w}.$
In \cite[Definition 2.11]{BBRWS} this additional grading is called the \emph{weight}. Juxtaposing graffiti adds the numbers of loops, so the product is additive in weight. We will write $\CPLr(2n)_{*,*}$ when we wish to emphasise both gradings.

\section{The filtration by dividers} \label{section:filtrationByDividers}

In this section we assume $a=0$. We will define a filtration on $\CPLr(2n)_{*,*}$ and study the resulting spectral sequence. We will think of this filtration graphically, but we are really calculating primitives.

For a graffito $x \in \CPLr(2n)_p$, let $\myDA(x)$ denote the number of \emph{vertical dividers which can be drawn through $x$}. The following picture shows a graffito $x$, together with three red vertical dashed lines (dividers) illustrating that $\myDA(x) = 3$.

\begin{center}
\begin{tikzpicture}[x=1.5cm,y=-.5cm,baseline=-1.05cm]

\def\wid{\standardWidth}
\def\hei{\standardHeight}
\def\nodesize{3}
\def\ang{90}

\node[v, minimum size=\nodesize] (11) at (0* \wid,0*\hei) {};
\node[v, minimum size=\nodesize] (12) at (0* \wid,1*\hei) {};
\node[v, minimum size=\nodesize] (13) at (0* \wid,2*\hei) {};
\node[v, minimum size=\nodesize] (14) at (0* \wid,3*\hei) {};

\node[v, minimum size=\nodesize] (21) at (1* \wid,0*\hei) {};
\node[v, minimum size=\nodesize] (22) at (1* \wid,1*\hei) {};
\node[v, minimum size=\nodesize] (23) at (1* \wid,2*\hei) {};
\node[v, minimum size=\nodesize] (24) at (1* \wid,3*\hei) {};

\node[v, minimum size=\nodesize] (31) at (2* \wid,0*\hei) {};
\node[v, minimum size=\nodesize] (32) at (2* \wid,1*\hei) {};
\node[v, minimum size=\nodesize] (33) at (2* \wid,2*\hei) {};
\node[v, minimum size=\nodesize] (34) at (2* \wid,3*\hei) {};

\node[v, minimum size=\nodesize] (41) at (3* \wid,0*\hei) {};
\node[v, minimum size=\nodesize] (42) at (3* \wid,1*\hei) {};
\node[v, minimum size=\nodesize] (43) at (3* \wid,2*\hei) {};
\node[v, minimum size=\nodesize] (44) at (3* \wid,3*\hei) {};

\node[v, minimum size=\nodesize] (51) at (4* \wid,0*\hei) {};
\node[v, minimum size=\nodesize] (52) at (4* \wid,1*\hei) {};
\node[v, minimum size=\nodesize] (53) at (4* \wid,2*\hei) {};
\node[v, minimum size=\nodesize] (54) at (4* \wid,3*\hei) {};

\node[v, minimum size=\nodesize] (61) at (5* \wid,0*\hei) {};
\node[v, minimum size=\nodesize] (62) at (5* \wid,1*\hei) {};
\node[v, minimum size=\nodesize] (63) at (5* \wid,2*\hei) {};
\node[v, minimum size=\nodesize] (64) at (5* \wid,3*\hei) {};

\node[v, minimum size=\nodesize] (71) at (6* \wid,0*\hei) {};
\node[v, minimum size=\nodesize] (72) at (6* \wid,1*\hei) {};
\node[v, minimum size=\nodesize] (73) at (6* \wid,2*\hei) {};
\node[v, minimum size=\nodesize] (74) at (6* \wid,3*\hei) {};

\draw[e] (11) to[in =180, out =180] (12);
\draw[e] (13) to[in =180, out =180] (14);
\draw[e] (12) to[in =0, out =0] (13);
\draw[e] (11) to[out = 0, in = 180] (23);
\draw[e] (14) to[out = 0, in = 180] (24);
\draw[e] (21) to[out = 180, in = 180] (22);
\draw[e] (21) to[in =0, out =0] (22);
\draw[e] (23) to[out = 0, in = 180] (31);
\draw[e] (24) to[out = 0, in = 180] (32);
\draw[e] (33) to[out = 180, in = 180] (34);
\draw[e] (31) to[in =0, out =0] (34);
\draw[e] (32) to[in =0, out =0] (33);

\draw[e] (41) to[in =180, out =180] (44);
\draw[e] (42) to[in =180, out =180] (43);
\draw[e] (41) to[in =0, out =0] (42);
\draw[e] (44) to[in =0, out =0] (43);

\draw[e] (51) to[in =180, out =180] (52);
\draw[e] (53) to[in =180, out =180] (54);
\draw[e] (51) to[in =0, out =0] (52);
\draw[e] (53) to[out = 0, in = 180] (61);
\draw[e] (54) to[out = 0, in = 180] (62);
\draw[e] (63) to[out = 180, in = 180] (64);
\draw[e] (61) to[in =0, out =0] (62);
\draw[e] (63) to[in =0, out =0] (64);

\draw[e] (71) to[out = 180, in = 180] (72);
\draw[e] (73) to[out = 180, in = 180] (74);
\draw[e] (71) to[in =0, out =0] (72);
\draw[e] (74) to[in =0, out =0] (73);

\draw[very thick] (0* \wid,-0.25*\hei) -- (0* \wid,3.25*\hei);
\draw[very thick] (1* \wid,-0.25*\hei) -- (1* \wid,3.25*\hei);
\draw[very thick] (2* \wid,-0.25*\hei) -- (2* \wid,3.25*\hei);
\draw[very thick] (3* \wid,-0.25*\hei) -- (3* \wid,3.25*\hei);
\draw[very thick] (4* \wid,-0.25*\hei) -- (4* \wid,3.25*\hei);
\draw[very thick] (5* \wid,-0.25*\hei) -- (5* \wid,3.25*\hei);
\draw[very thick] (6* \wid,-0.25*\hei) -- (6* \wid,3.25*\hei);

\draw[very thick, red, dashed] (2.5* \wid,-0.25*\hei) -- (2.5* \wid,3.25*\hei);
\draw[very thick, red, dashed] (3.5* \wid,-0.25*\hei) -- (3.5* \wid,3.25*\hei);
\draw[very thick, red, dashed] (5.5* \wid,-0.25*\hei) -- (5.5* \wid,3.25*\hei);
\end{tikzpicture}
\end{center}

In symbols, writing $ x = \beta_0 \lvert \beta_1 \lvert \dots \lvert \beta_{p-1} \lvert \beta_p,$ where $\beta_0 \in S^{\vee}(0,2n)$, $\beta_i \in \overline{\TL}_{2n} = I_{\leq 2n-1}$, and $\beta_p \in S(2n,0)$, $\myDA(x)$ is the number of values of $i$ for which $\beta_i$ lies in $I_0$.

\begin{lemma} \label{lem:downArrowGEQ} For a graffito $x \in \CPLr(2n)_{p}$, and $i \in \{0, \dots p-1\}$, if $d_i(x)$ is nonzero, then $\myDA(d_i(x)) \in \{ \myDA(x) , \myDA(x) + 1 \}.$
\end{lemma}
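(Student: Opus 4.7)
The argument is a case analysis on which pair of $\beta$-factors the face map $d_i$ merges, and rests on one geometric observation: if $\alpha \in S^{\vee}(0,2n)$ and $\gamma \in S(2n,0)$, then the composition $\alpha \cdot \gamma \in \TL(0,0) = R$ equals $a^k$ for some $k \geq 1$, because gluing two planar link states on $2n$ shared endpoints always produces at least one closed loop. Since $a = 0$, every such composition vanishes. Combined with the decomposition $\beta = \beta^L \otimes \beta^R$ of \cref{lem:RtensorIso}, this yields two facts I shall use: (i) if $\beta, \beta' \in I_0$ then $\beta \beta' = \beta^L \otimes (\beta^R \cdot {\beta'}^L) \otimes {\beta'}^R = 0$; and (ii) if $\beta_0 \in S^{\vee}(0,2n)$ and $\beta \in I_0$ then $\beta_0 \cdot \beta = 0$, and symmetrically on the right end.

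Next I would translate these into restrictions imposed by the hypothesis $d_i(x) \neq 0$. If $d_i$ is a middle-middle merge (that is, $1 \leq i \leq p-2$), then (i) implies that $\beta_i$ and $\beta_{i+1}$ cannot both lie in $I_0$. If $d_i$ is an endpoint merge ($i=0$ or $i=p-1$), then (ii) implies that the adjacent middle factor $\beta_1$ (resp.\ $\beta_{p-1}$) cannot lie in $I_0$. Moreover, since $I_0 \subset \TL_{2n}$ is a two-sided ideal, whenever one of $\beta_i, \beta_{i+1}$ lies in $I_0$, so does the product $\beta_i \beta_{i+1}$ provided it is nonzero.

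The proof then concludes by directly comparing $\myDA(d_i(x))$ with $\myDA(x)$, viewing each as a count of $I_0$-factors in the middle range of its respective graffito. For an endpoint merge, the adjacent middle factor was not in $I_0$, so its removal leaves the count unchanged. For a middle-middle merge with exactly one of $\beta_i, \beta_{i+1}$ in $I_0$, the ideal property forces the product to lie in $I_0$ as well, giving net change $1 - 1 = 0$. For a middle-middle merge with neither in $I_0$, the change equals $[\beta_i \beta_{i+1} \in I_0] \in \{0, +1\}$. In every case the difference lies in $\{0, +1\}$. The only substantive input is the loop-formation claim above; once that is granted, the rest is bookkeeping about the ideal $I_0$ and the indexing of the bar factors.
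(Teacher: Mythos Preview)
Your proof is correct and is the paper's argument translated into algebraic language. Where the paper argues pictorially that a divider can only disappear if a loop becomes unpinned (forcing $d_i(x)=0$), your facts (i) and (ii)---derived from the tensor decomposition of \cref{lem:RtensorIso} and the ideal property of $I_0$---are exactly the algebraic incarnations of the paper's two bulleted cases (two adjacent dividers, and a divider next to an endmost bar).
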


\begin{proof} We must argue that deleting a bar cannot decrease the number of dividers, and can increase it by at most one. After a bar deletion, any pre-existing divider certainly still doesn't intersect the graffito, but it may:
\begin{itemize}
    \item have moved `off the end' of the graffito, if the deletion was one of the endmost bars, or
    \item have been identified with an adjacent divider, if the bar between them was deleted.
\end{itemize}
In either of these cases, the bar deletion must have unpinned a loop:
\begin{itemize}
    \item if there was a divider next to an endmost bar, then necessarily there was a loop supported only on that bar, and
    \item if there were two adjacent dividers, then the bar between them contained a loop supported only on that bar.
\end{itemize}
In other words, if the number of dividers decreases, then actually we have $d_i(x) = 0$. 

Also, a bar deletion performs only one multiplication, so creates only one new diagram, and hence can increase the number of dividers by at most one.
\end{proof}

\begin{remark} An important bar deletion increasing the number of dividers is
\begin{center}
$d_1\Biggl($
\begin{tikzpicture}[x=1.5cm,y=-.5cm,baseline=-0.7cm]

\def\wid{\standardWidth}
\def\hei{\standardHeight}
\def\nodesize{3}
\def\ang{90}

\node[v, minimum size=\nodesize] (11) at (0* \wid,0*\hei) {};
\node[v, minimum size=\nodesize] (12) at (0* \wid,1*\hei) {};
\node[v, minimum size=\nodesize] (13) at (0* \wid,2*\hei) {};
\node[v, minimum size=\nodesize] (14) at (0* \wid,3*\hei) {};

\node[v, minimum size=\nodesize] (21) at (1* \wid,0*\hei) {};
\node[v, minimum size=\nodesize] (22) at (1* \wid,1*\hei) {};
\node[v, minimum size=\nodesize] (23) at (1* \wid,2*\hei) {};
\node[v, minimum size=\nodesize] (24) at (1* \wid,3*\hei) {};

\node[v, minimum size=\nodesize] (31) at (2* \wid,0*\hei) {};
\node[v, minimum size=\nodesize] (32) at (2* \wid,1*\hei) {};
\node[v, minimum size=\nodesize] (33) at (2* \wid,2*\hei) {};
\node[v, minimum size=\nodesize] (34) at (2* \wid,3*\hei) {};

\draw[e] (11) to[in =180, out =180] (12);
\draw[e] (13) to[in =180, out =180] (14);
\draw[e] (12) to[in =0, out =0] (13);
\draw[e] (11) to[out = 0, in = 180] (23);
\draw[e] (14) to[out = 0, in = 180] (24);
\draw[e] (21) to[out = 180, in = 180] (22);
\draw[e] (23) to[in =0, out =0] (24);
\draw[e] (21) to[out = 0, in = 180] (31);
\draw[e] (22) to[out = 0, in = 180] (34);
\draw[e] (32) to[out = 180, in = 180] (33);
\draw[e] (31) to[in =0, out =0] (32);
\draw[e] (34) to[in =0, out =0] (33);

\draw[very thick] (0* \wid,-0.25*\hei) -- (0* \wid,3.25*\hei);
\draw[very thick] (1* \wid,-0.25*\hei) -- (1* \wid,3.25*\hei);
\draw[very thick] (2* \wid,-0.25*\hei) -- (2* \wid,3.25*\hei);
\end{tikzpicture}
$\Biggr) = $
\begin{tikzpicture}[x=1.5cm,y=-.5cm,baseline=-0.7cm]

\def\wid{\standardWidth}
\def\hei{\standardHeight}
\def\nodesize{3}
\def\ang{90}

\node[v, minimum size=\nodesize] (11) at (0* \wid,0*\hei) {};
\node[v, minimum size=\nodesize] (12) at (0* \wid,1*\hei) {};
\node[v, minimum size=\nodesize] (13) at (0* \wid,2*\hei) {};
\node[v, minimum size=\nodesize] (14) at (0* \wid,3*\hei) {};

\node[] (21) at (1* \wid,0*\hei) {};
\node[] (22) at (1* \wid,1*\hei) {};
\node[] (23) at (1* \wid,2*\hei) {};
\node[] (24) at (1* \wid,3*\hei) {};

\node[v, minimum size=\nodesize] (31) at (2* \wid,0*\hei) {};
\node[v, minimum size=\nodesize] (32) at (2* \wid,1*\hei) {};
\node[v, minimum size=\nodesize] (33) at (2* \wid,2*\hei) {};
\node[v, minimum size=\nodesize] (34) at (2* \wid,3*\hei) {};

\draw[e] (11) to[in =180, out =180] (12);
\draw[e] (13) to[in =180, out =180] (14);
\draw[e] (12) to[in =0, out =0] (13);
\draw[e] (11) to[out = 0, in = 180] (23.center);
\draw[e] (14) to[out = 0, in = 180] (24.center);
\draw[e] (21.center) to[out = 180, in = 180] (22.center);
\draw[e] (23.center) to[in =0, out =0] (24.center);
\draw[e] (21.center) to[out = 0, in = 180] (31);
\draw[e] (22.center) to[out = 0, in = 180] (34);
\draw[e] (32) to[out = 180, in = 180] (33);
\draw[e] (31) to[in =0, out =0] (32);
\draw[e] (34) to[in =0, out =0] (33);

\draw[very thick] (0* \wid,-0.25*\hei) -- (0* \wid,3.25*\hei);
\draw[very thick] (2* \wid,-0.25*\hei) -- (2* \wid,3.25*\hei);
\end{tikzpicture}
$=$
\begin{tikzpicture}[x=1.5cm,y=-.5cm,baseline=-0.7cm]

\def\wid{\standardWidth}
\def\hei{\standardHeight}
\def\nodesize{3}
\def\ang{90}

\node[v, minimum size=\nodesize] (11) at (0* \wid,0*\hei) {};
\node[v, minimum size=\nodesize] (12) at (0* \wid,1*\hei) {};
\node[v, minimum size=\nodesize] (13) at (0* \wid,2*\hei) {};
\node[v, minimum size=\nodesize] (14) at (0* \wid,3*\hei) {};

\node[v, minimum size=\nodesize] (31) at (1* \wid,0*\hei) {};
\node[v, minimum size=\nodesize] (32) at (1* \wid,1*\hei) {};
\node[v, minimum size=\nodesize] (33) at (1* \wid,2*\hei) {};
\node[v, minimum size=\nodesize] (34) at (1* \wid,3*\hei) {};

\draw[e] (11) to[in =180, out =180] (12);
\draw[e] (13) to[in =180, out =180] (14);
\draw[e] (12) to[in =0, out =0] (13);
\draw[e] (11) to[in =0, out =0] (14);
\draw[e] (32) to[out = 180, in = 180] (33);
\draw[e] (31) to[out = 180, in = 180] (34);
\draw[e] (31) to[in =0, out =0] (32);
\draw[e] (34) to[in =0, out =0] (33);

\draw[very thick] (0* \wid,-0.25*\hei) -- (0* \wid,3.25*\hei);
\draw[very thick] (1* \wid,-0.25*\hei) -- (1* \wid,3.25*\hei);

\draw[very thick, red, dashed] (0.5* \wid,-0.25*\hei) -- (0.5* \wid,3.25*\hei);
\end{tikzpicture}
.
\end{center}
\end{remark}

This filtration also interacts well with the juxtaposition product.
\begin{lemma}
    \label{lem:DividersOfProduct} For graffiti $x$ and $y$, we have $\myDA(xy) = \myDA(x) + \myDA(y) + 1$.
\end{lemma}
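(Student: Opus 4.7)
The plan is to directly unpack the juxtaposition product in bar-construction notation and count dividers, treating the only non-trivial point as identifying the ``seam'' content.

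Write $x = \beta_0 | \beta_1 | \dots | \beta_p$ with $\beta_0 \in S^\vee(0,2n)$, $\beta_i \in \overline{\TL}_{2n}$ for $1 \le i \le p-1$, and $\beta_p \in S(2n,0)$, and similarly $y = \gamma_0 | \gamma_1 | \dots | \gamma_q$. The juxtaposition product is defined using the isomorphism $S(2n,0) \otimes_R S^\vee(0,2n) \cong I_0 \subset \overline{\TL}_{2n}$ from \cref{lem:RtensorIso} (at $k = 0$), which fuses the right end of $x$ with the left end of $y$, yielding
$$xy \;=\; \beta_0 | \beta_1 | \dots | \beta_{p-1} | \beta_p \gamma_0 | \gamma_1 | \dots | \gamma_q \;\in\; \CPLr(2n)_{p+q},$$
where $\beta_p \gamma_0$ denotes the image of $\beta_p \otimes \gamma_0$ in $I_0$.

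Now I count dividers. The $p+q-1$ internal positions of $xy$ split into three groups: the $p-1$ positions inherited from $x$ with unchanged contents $\beta_1, \dots, \beta_{p-1}$; the $q-1$ positions inherited from $y$ with unchanged contents $\gamma_1, \dots, \gamma_{q-1}$; and a single new ``seam'' position at index $p$ with content $\beta_p \gamma_0$. The first group contributes $\myDA(x)$ dividers and the second contributes $\myDA(y)$ by definition. The seam content lies in $I_0$ by construction, so it always contributes exactly one further divider, giving $\myDA(xy) = \myDA(x) + \myDA(y) + 1$.

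The only substantive observation is that the seam always lies in $I_0$, which is immediate from \cref{lem:RtensorIso}: geometrically, $\beta_p \in S(2n,0)$ consists entirely of left-to-left arcs and $\gamma_0 \in S^\vee(0,2n)$ consists entirely of right-to-right arcs, so the juxtaposed diagram $\beta_p \gamma_0$ has no left-to-right propagating strand and hence lies in $I_0$. I expect no real obstacle; the argument is essentially a bookkeeping identity about the bar-construction description of the product.
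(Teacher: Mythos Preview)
Your proof is correct and takes essentially the same approach as the paper. The paper's proof is a one-line geometric observation (``concatenation retains all dividers and creates one new divider between the two''); you have unpacked this in bar-construction notation, making explicit that the seam element $\beta_p\gamma_0$ lies in $I_0$ by construction and hence always contributes the extra divider.
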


\begin{proof} The product is juxtaposition: it retains all dividers from $x$ and $y$, and we get a new divider between the two.
\end{proof}

For the formal deduction of the spectral sequence it will be convenient to flip the indexing around to get an increasing filtration. Define $$ \myDA'(x) =  \deg(x) - 1 - \myDA(x).$$  Note that $\myDA'(x)$ counts the number of \emph{non-dividers}: positions where there could have been a divider, but isn't. \cref{lem:downArrowGEQ} gives the following corollary.

\begin{corollary} \label{cor:NonDividersLESS} For a graffito $x$, we have $\myDA'(d(x)) \in \{ \myDA'(x) - 1, \myDA'(x) - 2\}$.
\end{corollary}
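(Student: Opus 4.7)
The plan is to reduce this immediately to \cref{lem:downArrowGEQ} by unwinding the definition of $\myDA'$ and applying the lemma termwise to the sum $d(x) = \sum_i (-1)^i d_i(x)$. The statement should be read as asserting that every basis graffito appearing with nonzero coefficient in $d(x)$ satisfies the claimed bound; since $d$ is a signed sum of face maps, it suffices to prove the bound separately for each nonzero $d_i(x)$.

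First I would note that every face map $d_i$ lowers homological degree by exactly one, so whenever $d_i(x)$ is nonzero we have $\deg(d_i(x)) = \deg(x) - 1$. Combining this with \cref{lem:downArrowGEQ}, which gives $\myDA(d_i(x)) \in \{\myDA(x), \myDA(x)+1\}$, the definition $\myDA'(y) = \deg(y) - 1 - \myDA(y)$ yields
\[
\myDA'(d_i(x)) \;=\; (\deg(x) - 2) - \myDA(d_i(x)) \;\in\; \bigl\{\myDA'(x) - 1,\; \myDA'(x) - 2\bigr\},
\]
with the first possibility corresponding to $\myDA(d_i(x)) = \myDA(x)$ and the second to $\myDA(d_i(x)) = \myDA(x)+1$.

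There is no real obstacle here: the content is entirely in \cref{lem:downArrowGEQ}, and the corollary is the formal translation into the increasing filtration $\myDA'$ that is needed for setting up the spectral sequence. The only thing to be careful about is the convention for extending $\myDA'$ to linear combinations, which I would briefly flag at the start of the proof by saying that we apply the estimate to each basis graffito appearing in $d_i(x)$ separately.
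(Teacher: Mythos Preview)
Your proof is correct and follows the same route as the paper: both simply unwind the definition $\myDA'(y)=\deg(y)-1-\myDA(y)$, use that $\deg(d_i(x))=\deg(x)-1$, and invoke \cref{lem:downArrowGEQ}. Your version is slightly more explicit in spelling out that the claim must be read termwise over the basis graffiti appearing in $d(x)$, which is a reasonable clarification the paper leaves implicit.
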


\begin{proof} Use \cref{lem:downArrowGEQ}, noting that \begin{align*}
    \myDA'(d(x)) & =  \deg(d(x)) - 1 - \myDA(d(x))  = \deg(x) - 2 - \myDA(d(x)). \qedhere
\end{align*}
 \end{proof}

\begin{corollary}
    \label{cor:NonDividersOfProduct} For graffiti $x$ and $y$, we have $\myDA'(xy) = \myDA'(x) + \myDA'(y)$.
\end{corollary}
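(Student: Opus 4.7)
The plan is to compute $\myDA'(xy)$ directly from the definition and substitute in \cref{lem:DividersOfProduct}, the key observation being that the degree (the number of bars) is additive under juxtaposition: $\deg(xy) = \deg(x) + \deg(y)$, since the product simply places the bars of $y$ to the right of those of $x$ without introducing or removing any.

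First I would unpack $\myDA'(xy) = \deg(xy) - 1 - \myDA(xy)$, then substitute $\deg(xy) = \deg(x) + \deg(y)$ and $\myDA(xy) = \myDA(x) + \myDA(y) + 1$ from \cref{lem:DividersOfProduct}. The $+1$ from the new divider created between the two factors and the extra $-1$ coming from the `$-1$' in the formula for $\myDA'$ cancel precisely with the second copy of $-1$ needed to split $\deg(xy) - 1$ as $(\deg(x) - 1) + (\deg(y) - 1) + 1$. Regrouping terms then gives exactly $(\deg(x) - 1 - \myDA(x)) + (\deg(y) - 1 - \myDA(y)) = \myDA'(x) + \myDA'(y)$.

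There is no real obstacle here; the only point that deserves a sentence is the additivity of degree under juxtaposition, which is immediate from the definition of the product (concatenating the bar-sequences of $x$ and $y$, with the rightmost link state of $x$ and the leftmost link state of $y$ being joined across the new divider to produce an element of $I_0 \subset \overline{\TL}_{2n}$ in the middle tensor position). Thus the proof should occupy one short display of arithmetic.
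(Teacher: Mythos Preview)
Your proposal is correct and matches the paper's proof essentially line for line: both unfold the definition $\myDA'(xy) = \deg(xy) - 1 - \myDA(xy)$, substitute $\deg(xy) = \deg(x) + \deg(y)$ (the product is graded) and $\myDA(xy) = \myDA(x) + \myDA(y) + 1$ from \cref{lem:DividersOfProduct}, and regroup.
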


\begin{proof}
    Using \cref{lem:DividersOfProduct} and the fact that the product is graded, we have \begin{align*}
    \myDA'(xy) & = \deg(xy) - 1 - \myDA(xy)  \\
    & = \deg(x) + \deg(y) - 1 - \myDA(x) - \myDA(y) - 1 \\
    & = \myDA'(x) + \myDA'(y). \qedhere
\end{align*}
\end{proof}

We are now ready to define our filtration $$F_{*,*}^{- \infty} \subset \dots \subset  F_{*,*}^p \subset F_{*,*}^{p+1} \subset \dots \subset \CPLr(2n)_{*,*}.$$

\begin{definition} \label{def: filtration} Let $F^{p}_{*,*}$ be the subcomplex of $\CPLr(2n)_{*,*}$ consisting of those graffiti $x$ for which $\myDA'(x) \leq p$ (this is a subcomplex by \cref{cor:NonDividersLESS}).
\end{definition}

The terms $F^{p}_{*,*}$ are zero for $p < 0$, since the number of non-dividers lies between 0 and $\deg(x) - 1$. By \cref{cor:NonDividersOfProduct} this is a filtration of dgas.

\section{The spectral sequence} \label{section: spectral sequence}

Again, assume $a=0$. The previous section gives a bounded increasing filtration $F^p_{*,*}$ on the dga $\CPLr(2n)_{*,*}$, which is zero in negative degrees. We get (see e.g. \cite[Section 2.2]{McCleary}) a homological type spectral sequence of algebras 
\begin{align*} E^1_{p,q,*} = H_{p+q,*}(\faktor{F^p_{*,*}}{F^{p-1}_{*,*}}) \implies & H_{p+q,*}(\CPLr(2n)_{*,*}).
\end{align*}

\begin{lemma} The $q$-line of the $E^0$-page consists of graffiti with $q-1$ dividers.
\end{lemma}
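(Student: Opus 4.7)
The proof is essentially a bookkeeping exercise: I just need to unwind the definitions carefully. The plan is as follows.

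First, recall the standard construction of the $E^0$-page of the spectral sequence associated to the filtration $F^p_{*,*}$ (cf.\ \cite[Section 2.2]{McCleary}): we have
$$E^0_{p,q,*} = \faktor{F^p_{p+q,*}}{F^{p-1}_{p+q,*}}.$$
Since $F^p_{*,*}$ was defined as the submodule spanned by those graffiti $x$ with $\myDA'(x) \leq p$, the quotient $F^p_{p+q,*}/F^{p-1}_{p+q,*}$ has an $R$-basis consisting of (the classes of) graffiti $x$ of degree $\deg(x) = p+q$ such that $\myDA'(x) = p$ exactly.

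Next, I apply the definition $\myDA'(x) = \deg(x) - 1 - \myDA(x)$. For a graffito $x$ on the $q$-line of $E^0$, say at position $(p,q)$, this gives
$$\myDA(x) = \deg(x) - 1 - \myDA'(x) = (p+q) - 1 - p = q-1,$$
which is exactly the claim. In particular the answer depends only on $q$, as desired.

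The whole argument is a direct calculation; there is no real obstacle, and no nontrivial step beyond the definitional unwinding. The only thing worth noting is that the $E^0$-page really is concentrated in the stated strip $q \geq 1$, since we need $\myDA(x) \geq 0$, corresponding to the fact that from \cref{def: filtration} the filtration starts at $F^0$ (not below) and a graffito of degree $p$ has at most $p-1$ dividers.
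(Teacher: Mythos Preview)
Your proof is correct and follows essentially the same approach as the paper: both arguments substitute $\deg(x)=p+q$ and $\myDA'(x)=p$ into the defining relation $\myDA'(x)=\deg(x)-1-\myDA(x)$ to obtain $\myDA(x)=q-1$. The paper's version is terser, but the content is identical.
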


\begin{proof} If $x$ lies in bidegree $p,q$, then $\myDA'(x) = p$, and
$$q = (p+q)-p = \deg(x) - \myDA'(x) = \deg(x) - (\deg(x) - 1 - \myDA(x)) = 1 + \myDA(x). \qedhere$$
\end{proof}

The $d^0$-differential takes homology in each complex $\faktor{F^p}{F^{p-1}}$ individually. By \cref{cor:NonDividersLESS}, the boundary map strictly decreases filtration, so the differential $d^0$ is zero everywhere, and we can identify our $E^1$-page with the $E^0$-page, rewriting
\begin{align*}
    E^1_{p,q,*} \cong \faktor{F^p_{*,*}}{F^{p-1}_{*,*}} \implies & H_{p+q,*}(\CPLr(2n)_{*,*}).
\end{align*}

The grading by number of loops $w$ gives a direct sum decomposition
$$E^r_{p,q} \cong \bigoplus_{w \geq 1} E^r_{p,q,w}.$$
It follows from the above considerations that $E^1_{*,q,w}$ is the subquotient of $\CPLr(2n)_{*,*}$ consisting of graffiti with $w$ loops and $q-1$ dividers. The product is additive in $w$, so this is not a decomposition as spectral sequences of algebras.

\begin{definition} \label{def:complexCij} For $w \geq 1$ and $j \geq 0$, let the \emph{complex with $w$ loops and $j$ dividers}, denoted $C_*[w,j]$, be the subquotient of $\CPLr(2n)_{*,w}$ obtained by taking graffiti with $j$ dividers modulo those with more than $j$ dividers. Equivalently, $C_*[w,j]$ is the $q=j+1$-row of the $E^1$-page $E^1_{*,q,w}$, with differential $d^1$ and grading total degree.

Similarly, we write $C_*^L[w,j]$, $C_*^R[w,j]$, and  $C_*^{LR}[w,j]$ for the left, right, and two-sided open complexes (c.f. \cref{def: graffiti}) consisting of open graffiti where closing up (both) ends gives a graffito with $w$ loops and $j$ dividers.
\end{definition}

\begin{remark} Since the differential decreases filtration by at most 2 (in the strong sense of \cref{cor:NonDividersLESS}), the $r$-th cycles and $r$-th boundaries are constant for $r \geq 3$ so the spectral sequence collapses at $E^3$, and in particular the $E^\infty$-page is the homology of $E^2_{p,q,*}$ with respect to $d^2$. We will not use this.
\end{remark}

In our case, $2n=4$, we will show in \cref{section: mainTechnical} that the $E^2$-page is as follows. We will ultimately show (\cref{section:basechange}) that there is a compatible spectral sequence for the model, and \cref{thm: main} will follow by a comparison of $E^2$-pages.

\begin{theorem} \label{thm:mainTechnical} For any ground ring $R$, when $a=0$ and $2n=4$, we have:
\begin{enumerate}
    \item \label{part:1} The homology of the complex $C_*[1,0]$ with one loop and no dividers is a single copy of $R$ in total degree 1. It is generated by the homology class of
\begin{center}
$\varphi(x) : =$
\begin{tikzpicture}[x=1.5cm,y=-.5cm,baseline=-0.7cm]
\def\wid{\standardWidth}
\def\hei{\standardHeight}
\def\nodesize{3}
\def\ang{90}

\node[v, minimum size=\nodesize] (11) at (0* \wid,0*\hei) {};
\node[v, minimum size=\nodesize] (12) at (0* \wid,1*\hei) {};
\node[v, minimum size=\nodesize] (13) at (0* \wid,2*\hei) {};
\node[v, minimum size=\nodesize] (14) at (0* \wid,3*\hei) {};

\draw[e] (11) to[in =180, out =180] (12);
\draw[e] (13) to[in =180, out =180] (14);
\draw[e] (12) to[in =0, out =0] (13);
\draw[e] (11) to[out = 0, in = 10] (14);

\draw[very thick] (0* \wid,-0.25*\hei) -- (0* \wid,3.25*\hei);
\end{tikzpicture}
.
\end{center}
    \item \label{part:2} The homology of the complex $C_*[2,0]$ with two loops and no dividers is a single copy of $R$ in total degree 3. It is generated by the homology class of
    \begin{center}
    $\varphi(y) : =$
    \begin{tikzpicture}[x=1.5cm,y=-.5cm,baseline=-0.7cm]
        \def\wid{\standardWidth}
            \def\hei{\standardHeight}
            \def\nodesize{3}

            \node[v, minimum size=\nodesize] (01) at (-1* \wid,0*\hei) {};
            \node[v, minimum size=\nodesize] (02) at (-1* \wid,1*\hei) {};
            \node[v, minimum size=\nodesize] (03) at (-1* \wid,2*\hei) {};
            \node[v, minimum size=\nodesize] (04) at (-1* \wid,3*\hei) {};

            \node[v, minimum size=\nodesize] (11) at (0* \wid,0*\hei) {};
            \node[v, minimum size=\nodesize] (12) at (0* \wid,1*\hei) {};
            \node[v, minimum size=\nodesize] (13) at (0* \wid,2*\hei) {};
            \node[v, minimum size=\nodesize] (14) at (0* \wid,3*\hei) {};

            \node[v, minimum size=\nodesize] (21) at (1* \wid,0*\hei) {};
            \node[v, minimum size=\nodesize] (22) at (1* \wid,1*\hei) {};
            \node[v, minimum size=\nodesize] (23) at (1* \wid,2*\hei) {};
            \node[v, minimum size=\nodesize] (24) at (1* \wid,3*\hei) {};

            \draw[e] (02) to[in =0, out =0] (03);
            \draw[e] (01) to[in =180, out =180] (02);
            \draw[e] (03) to[in =180, out =180] (04);
            \draw[e] (11) to[in =180, out =180] (12);
            \draw[e] (13) to[out = 180, in = 0] (01);
            \draw[e] (14) to[out = 180, in = 0] (04);

            \draw[e] (11) to[in =0, out =0] (12);
            \draw[e] (23) to[in =180, out =180] (22);
            \draw[e] (21) to[in =0, out =0] (22);
            \draw[e] (23) to[in =0, out =0] (24);
            \draw[e] (13) to[out = 0, in = 180] (21);
            \draw[e] (14) to[out = 0, in = 180] (24);

            \draw[very thick] (0* \wid,-0.25*\hei) -- (0* \wid,3.25*\hei);
            \draw[very thick] (-1* \wid,-0.25*\hei) -- (-1* \wid,3.25*\hei);
            \draw[very thick] (1* \wid,-0.25*\hei) -- (1* \wid,3.25*\hei);
    \end{tikzpicture}
    $+$ 
    \begin{tikzpicture}[x=1.5cm,y=-.5cm,baseline=-0.7cm]
       \def\wid{\standardWidth}
            \def\hei{\standardHeight}
            \def\nodesize{3}

            \node[v, minimum size=\nodesize] (01) at (-1* \wid,0*\hei) {};
            \node[v, minimum size=\nodesize] (02) at (-1* \wid,1*\hei) {};
            \node[v, minimum size=\nodesize] (03) at (-1* \wid,2*\hei) {};
            \node[v, minimum size=\nodesize] (04) at (-1* \wid,3*\hei) {};

            \node[v, minimum size=\nodesize] (11) at (0* \wid,0*\hei) {};
            \node[v, minimum size=\nodesize] (12) at (0* \wid,1*\hei) {};
            \node[v, minimum size=\nodesize] (13) at (0* \wid,2*\hei) {};
            \node[v, minimum size=\nodesize] (14) at (0* \wid,3*\hei) {};

            \node[v, minimum size=\nodesize] (21) at (1* \wid,0*\hei) {};
            \node[v, minimum size=\nodesize] (22) at (1* \wid,1*\hei) {};
            \node[v, minimum size=\nodesize] (23) at (1* \wid,2*\hei) {};
            \node[v, minimum size=\nodesize] (24) at (1* \wid,3*\hei) {};

            \draw[e] (02) to[in =0, out =0] (03);
            \draw[e] (01) to[in =180, out =180] (02);
            \draw[e] (03) to[in =180, out =180] (04);
            \draw[e] (13) to[in =180, out =180] (14);
            \draw[e] (11) to[out = 180, in = 0] (01);
            \draw[e] (12) to[out = 180, in = 0] (04);

            \draw[e] (13) to[in =0, out =0] (14);
            \draw[e] (23) to[in =180, out =180] (22);
            \draw[e] (21) to[in =0, out =0] (22);
            \draw[e] (23) to[in =0, out =0] (24);
            \draw[e] (11) to[out = 0, in = 180] (21);
            \draw[e] (12) to[out = 0, in = 180] (24);

            \draw[very thick] (0* \wid,-0.25*\hei) -- (0* \wid,3.25*\hei);
            \draw[very thick] (-1* \wid,-0.25*\hei) -- (-1* \wid,3.25*\hei);
            \draw[very thick] (1* \wid,-0.25*\hei) -- (1* \wid,3.25*\hei); 
    \end{tikzpicture}
    \\
    $-$
    \begin{tikzpicture}[x=1.5cm,y=-.5cm,baseline=-0.7cm]
       \def\wid{\standardWidth}
            \def\hei{\standardHeight}
            \def\nodesize{3}

            \node[v, minimum size=\nodesize] (01) at (-1* \wid,0*\hei) {};
            \node[v, minimum size=\nodesize] (02) at (-1* \wid,1*\hei) {};
            \node[v, minimum size=\nodesize] (03) at (-1* \wid,2*\hei) {};
            \node[v, minimum size=\nodesize] (04) at (-1* \wid,3*\hei) {};

            \node[v, minimum size=\nodesize] (11) at (0* \wid,0*\hei) {};
            \node[v, minimum size=\nodesize] (12) at (0* \wid,1*\hei) {};
            \node[v, minimum size=\nodesize] (13) at (0* \wid,2*\hei) {};
            \node[v, minimum size=\nodesize] (14) at (0* \wid,3*\hei) {};

            \node[v, minimum size=\nodesize] (21) at (1* \wid,0*\hei) {};
            \node[v, minimum size=\nodesize] (22) at (1* \wid,1*\hei) {};
            \node[v, minimum size=\nodesize] (23) at (1* \wid,2*\hei) {};
            \node[v, minimum size=\nodesize] (24) at (1* \wid,3*\hei) {};

            \draw[e] (02) to[in =0, out =0] (03);
            \draw[e] (01) to[in =180, out =180] (02);
            \draw[e] (03) to[in =180, out =180] (04);
            \draw[e] (11) to[in =180, out =180] (12);
            \draw[e] (13) to[out = 180, in = 0] (01);
            \draw[e] (14) to[out = 180, in = 0] (04);

            \draw[e] (13) to[in =0, out =0] (14);
            \draw[e] (23) to[in =180, out =180] (22);
            \draw[e] (21) to[in =0, out =0] (22);
            \draw[e] (23) to[in =0, out =0] (24);
            \draw[e] (11) to[out = 0, in = 180] (21);
            \draw[e] (12) to[out = 0, in = 180] (24);

            \draw[very thick] (0* \wid,-0.25*\hei) -- (0* \wid,3.25*\hei);
            \draw[very thick] (-1* \wid,-0.25*\hei) -- (-1* \wid,3.25*\hei);
            \draw[very thick] (1* \wid,-0.25*\hei) -- (1* \wid,3.25*\hei); 
    \end{tikzpicture}
    $-$
    \begin{tikzpicture}[x=1.5cm,y=-.5cm,baseline=-0.7cm]
       \def\wid{\standardWidth}
            \def\hei{\standardHeight}
            \def\nodesize{3}

            \node[v, minimum size=\nodesize] (01) at (-1* \wid,0*\hei) {};
            \node[v, minimum size=\nodesize] (02) at (-1* \wid,1*\hei) {};
            \node[v, minimum size=\nodesize] (03) at (-1* \wid,2*\hei) {};
            \node[v, minimum size=\nodesize] (04) at (-1* \wid,3*\hei) {};

            \node[v, minimum size=\nodesize] (11) at (0* \wid,0*\hei) {};
            \node[v, minimum size=\nodesize] (12) at (0* \wid,1*\hei) {};
            \node[v, minimum size=\nodesize] (13) at (0* \wid,2*\hei) {};
            \node[v, minimum size=\nodesize] (14) at (0* \wid,3*\hei) {};

            \node[v, minimum size=\nodesize] (21) at (1* \wid,0*\hei) {};
            \node[v, minimum size=\nodesize] (22) at (1* \wid,1*\hei) {};
            \node[v, minimum size=\nodesize] (23) at (1* \wid,2*\hei) {};
            \node[v, minimum size=\nodesize] (24) at (1* \wid,3*\hei) {};

            \draw[e] (02) to[in =0, out =0] (03);
            \draw[e] (01) to[in =180, out =180] (02);
            \draw[e] (03) to[in =180, out =180] (04);
            \draw[e] (13) to[in =180, out =180] (14);
            \draw[e] (11) to[out = 180, in = 0] (01);
            \draw[e] (12) to[out = 180, in = 0] (04);

            \draw[e] (11) to[in =0, out =0] (12);
            \draw[e] (23) to[in =180, out =180] (22);
            \draw[e] (21) to[in =0, out =0] (22);
            \draw[e] (23) to[in =0, out =0] (24);
            \draw[e] (13) to[out = 0, in = 180] (21);
            \draw[e] (14) to[out = 0, in = 180] (24);

            \draw[very thick] (0* \wid,-0.25*\hei) -- (0* \wid,3.25*\hei);
            \draw[very thick] (-1* \wid,-0.25*\hei) -- (-1* \wid,3.25*\hei);
            \draw[very thick] (1* \wid,-0.25*\hei) -- (1* \wid,3.25*\hei); 
    \end{tikzpicture}
.
\end{center}
    \item \label{part:igeq3} For $w \geq 3$, the homology of the complex $C_*[w,0]$ vanishes.
\end{enumerate}    
\end{theorem}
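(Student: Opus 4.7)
The plan is to exploit the structural decomposition arising from \cref{lem:RtensorIso}: the no-divider condition forces every middle factor $\beta_i$ (for $1 \leq i \leq p-1$) to lie in $I_2 \cong S(4,2) \otimes_R S^\vee(2,4)$, so each basis element of $C_p[w,0]$ splits as a sequence of left and right link-state halves. This relates $C_*[w,0]$ to the open-ended complexes $C_*^L[w,0]$, $C_*^R[w,0]$, and $C_*^{LR}[w,0]$ of \cref{def:complexCij} via the short exact sequence $0 \to \t \to S(4,2) \to S(4,0) \to 0$ of \cref{rmk:SES} (and its left--right dual), applied at the leftmost and/or rightmost factor of a graffito. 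These yield long exact sequences in homology, reducing the problem to an analysis of the open-ended complexes together with auxiliary complexes built from the trivial module $\t$ (which, via \cref{rmk:SES}, are essentially outermost-cup complexes in the sense of \cite[Definition 5.9]{BBRWS}).

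\textbf{The low-weight cases.} For Part~(1), I would first verify that $\varphi(x)$ is a cycle in $C_*[1,0]$ (automatic in degree one) and that $\varphi(x)$ and $\varphi(\hat{x})$ represent the same class, for instance via the degree-two element $\varphi(r)$, which modulo higher-divider terms should satisfy $d\varphi(r) = \varphi(\hat{x}) - \varphi(x)$. All higher-degree cycles should then be killed by an explicit contracting homotopy obtained by sliding link-state halves together. For Part~(2), the long exact sequences combined with the Part~(1) input should isolate a single surviving class in degree three; I would then exhibit $\varphi(y)$ as the explicit representative given in the statement and verify by direct diagrammatic calculation that it is a cycle representing a nontrivial class, tracing how the face maps interact among the four summands.

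\textbf{The vanishing and main obstacle.} Part~(3) is the technical heart of the argument. The strategy is to combine the long exact sequences above with an explicit contraction of the auxiliary open-ended complexes, so that for $w \geq 3$ every entry in the relevant long exact sequence vanishes. Concretely, given a graffito with $w \geq 3$ loops and no dividers, one locates an outermost loop (for example the topmost loop adjacent to the leftmost bar) and produces a chain homotopy sliding it off; the no-divider constraint with many loops is restrictive enough that such an operation should be well-defined. The main obstacle is that the naive sliding fails to commute with $d$ on the nose and the sign bookkeeping becomes intricate as $w$ grows; the cleanest route is almost certainly induction on $w$, with the long exact sequences absorbing correction terms into already-acyclic pieces. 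Making this induction go through uniformly in the ambient degree, and in a way compatible with the subtle quotient structure defining $C_*[w,0]$, is the hardest single step.
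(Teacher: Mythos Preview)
Your outline identifies the right starting point for Part~(1): the short exact sequence $0 \to K \to S(4,2) \to S(4,0) \to 0$ and its dual do give long exact sequences relating $C_*[1,0]$, $C_*^R[1,0]$, and $C_*^{LR}[1,0]$, and the paper uses exactly this. But you are missing the ingredient that makes the endpoint computable: after a change of perspective (splitting each $\beta_i$ into its two link states and regrouping onto bars), the twosided-open complex $C_*^{LR}[1,0]$ is literally the complex of words in four letters with the ``delete the $i$-th letter'' differential, whose homology is a single $R$ by an elementary chain homotopy. Without this word-complex identification, your ``explicit contracting homotopy obtained by sliding link-state halves together'' remains a hope rather than a construction.

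For Parts~(2) and~(3), your plan diverges from the paper and has a genuine gap. The paper's organizing idea is the notion of a \emph{pivot}: in a no-divider graffito with $w$ loops, there are exactly $w-1$ bars where two distinct loops meet, and deleting such a bar is zero in $C_*[w,0]$ (it either unpins a loop or creates a divider). This lets one filter by the ``openness'' of the endmost pivots and identify each filtration quotient as a shifted tensor product of one-loop open complexes $C_*^R[1,0] \otimes C_*^{LR}[1,0]^{\otimes(w-2)} \otimes C_*^L[1,0]$; for $w \geq 3$ the middle factors are acyclic, giving Part~(3) cleanly, and for $w=2$ a small finite spectral sequence computes the single surviving class. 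Your proposed ``slide off an outermost loop'' homotopy and induction on $w$ do not obviously interact well with the no-divider quotient (sliding a loop off a bar can change the divider count), and you yourself flag the sign and compatibility issues as unresolved. The pivot decomposition sidesteps all of that: it is the missing structural lemma that turns Parts~(2) and~(3) from a delicate homotopy construction into a routine tensor-product calculation.
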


This can be thought of as describing the $E^2$-page as a tensor algebra, as follows.

\begin{corollary} \label{cor: E2 as tensor} The underlying algebra of the $E^2$-page $E^2_{*,*,*}$ of the spectral sequence associated to the filtration by dividers is the nonunital tensor algebra on the above classes $\varphi(x)$ and $\varphi(y)$.
\end{corollary}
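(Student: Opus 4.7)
The plan is to reduce the computation of the $E^2$-page to the divider-free case via a cutting-and-K\"unneth argument, and then identify the resulting algebra structure. First, I would observe that a graffito in $C_*[w,j]$ decomposes canonically along its $j$ dividers into $j+1$ divider-free sub-graffiti. Using \cref{lem:RtensorIso} at each divider to split the entry in $I_0$ into left and right link states, this gives an isomorphism of graded $R$-modules
$$C_*[w,j] \cong \bigoplus_{w_1+\cdots+w_{j+1}=w} C_*[w_1,0]\otimes\cdots\otimes C_*[w_{j+1},0],$$
summed over ordered compositions of $w$ into $j+1$ positive parts (each piece has at least one bar, hence at least one loop, so $w_k \geq 1$).

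The crucial step is to verify that the $d^1$-differential on $C_*[w,j]$ respects this decomposition. Every bar of the full graffito lies in a unique piece (the bars adjacent to a divider being naturally assigned to the pieces they bound). Deleting any such bar produces either: a term corresponding to a bar deletion in the piece with all $j$ dividers preserved; a term in which a loop has been unpinned, which vanishes since $a=0$; a term with strictly fewer than $j$ dividers, which by \cref{lem:downArrowGEQ} must already be zero; or a term with strictly more than $j$ dividers, which vanishes in the subquotient $C_*[w,j]$. With the appropriate Koszul signs (the sign of a bar-deletion at local position $\ell$ in piece $k$ in the full graffito differs from its local sign by $(-1)^{\sum_{i<k}\deg(x_i)}$), this identifies $d^1$ with the tensor product differential, so the K\"unneth theorem applies and gives
$$H_*(C_*[w,j]) \cong \bigoplus_{w_1+\cdots+w_{j+1}=w} H_*(C_*[w_1,0])\otimes\cdots\otimes H_*(C_*[w_{j+1},0]).$$

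By \cref{thm:mainTechnical}, each tensor factor vanishes unless $w_k\in\{1,2\}$, in which case it is a free rank-one $R$-module generated by the class of $\varphi(x)$ (degree 1) or $\varphi(y)$ (degree 3). Summing over $(p,q,w)$ via the identification $E^2_{p,q,w} = H_{p+q}(C_*[w,q-1])$, the $E^2$-page as a graded $R$-module is therefore free on nonempty words in the two letters $\varphi(x)$ and $\varphi(y)$. For the algebra statement, recall that the product on $E^2$ is induced from juxtaposition in $\CPLr(4)$, and that by \cref{lem:DividersOfProduct} juxtaposition of two graffiti creates exactly one new divider between them. In the cutting decomposition, this is precisely concatenation of words, which is the product in the nonunital tensor algebra.

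The main obstacle will be the second paragraph's bookkeeping: carefully tracking how a bar deletion at or adjacent to a divider interacts with the \cref{lem:RtensorIso}-splitting of the divider entry, confirming that any loss of dividers forces a loop (and so vanishes because $a=0$), and verifying that the signs from the alternating-sum differential on the full complex match the Leibniz signs of the tensor-product differential on pieces.
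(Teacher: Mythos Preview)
Your proposal is correct and follows essentially the same route as the paper: decompose $C_*[w,j]$ along its dividers as a tensor product of divider-free pieces, observe that the $d^1$-differential respects this (the paper phrases this tersely as ``dividers don't interact at all with the differential''), apply \cref{thm:mainTechnical} and K\"unneth (using that everything is $R$-free), and identify the juxtaposition product with concatenation of words. Your bookkeeping concerns in the last paragraph are the right things to check but present no real obstruction; in particular, cases (b) and (c) of your second paragraph collapse into one, since the proof of \cref{lem:downArrowGEQ} shows that any loss of a divider under a face map forces a loop to become unpinned.
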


\begin{proof}  We think of the complex $C_*[w,p]$ with $w$ loops and $q-1$ dividers as `tensored over the dividers' (over $R$). Since the dividers don't interact at all with the differential,
$$C_*[w,q-1] \cong \bigoplus_{w_1 + \dots + w_{q} = w} C_*[w_1,0] \otimes_R \dots \otimes_R C_*[w_{q},0],$$
so the underlying algebra of the $E^1$-page $E^1_{*,*,*}$ of the whole spectral sequence is the nonunital tensor algebra on $E^1_{*,1,*}=C_*[*,0]$

By \cref{thm:mainTechnical}, the homology of each factor $C_*[w,0]$ is in particular $R$-free, so homology commutes with the tensor product, and the underlying algebra of the $E^2$-page $E^2_{*,*,*}$ of the whole spectral sequence is the nonunital tensor algebra on $E^2_{*,1,*}=H_*(C_*[*,0])$, which we computed as \cref{thm:mainTechnical}. The result follows.
\end{proof}

\section{Proof of \cref{thm:mainTechnical}} \label{section: mainTechnical}

We will prove the three parts individually. In this section $a=0$ and $2n=4$.

\subsection{A basic observation}

The following lemma will be useful.

\begin{lemma} \label{lem:basicObs}Let $S$ be a nonempty set of letters. Let $A_*$ be the dga with 
\begin{itemize}
    \item underlying algebra the augmentation ideal of the free associative algebra $T_R[S]$, where $\deg(x) = 1$ for all $x \in S$ (so degree is just word length), and
    \item differential in degree $p$ the alternating sum $\sum_{i=1}^{p} (-1)^{i+1}d_i$, where $d_i$ deletes the $i$-th letter of a monomial.
\end{itemize}
Then $A_*$ is contractible, with homology generated by any single letter $\ell$, i.e. $$H_i(A_*) \cong \begin{cases} R\{\ell\} & i=1 \\ 
0 & \textrm{otherwise.}
\end{cases}$$
\end{lemma}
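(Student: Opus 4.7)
My plan is to exhibit an explicit chain contraction onto the degree-one piece. Fix any letter $\ell \in S$ (possible since $S$ is nonempty) and define an $R$-linear map $h \colon A_p \to A_{p+1}$ by prepending $\ell$, i.e.\ $h(w_1 \cdots w_p) = \ell w_1 \cdots w_p$.

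The core computation is to verify that $dh + hd = \mathrm{id}$ on $A_p$ for every $p \geq 2$. Writing $w = w_1 \cdots w_p$, I would expand $d(h(w)) = \sum_{i=1}^{p+1}(-1)^{i+1}d_i(\ell w)$. The $i = 1$ term gives $w$ (landing in $A_p$ since $p \geq 1$, so no augmentation-ideal trouble), while for $i \geq 2$ one has $d_i(\ell w) = \ell \cdot d_{i-1}(w)$, which lies in $A_p$ because $p \geq 2$ guarantees $d_{i-1}(w)$ still has length $\geq 1$. Reindexing $j=i-1$ converts the $(-1)^{i+1}$ sign into $-(-1)^{j+1}$, yielding $d(h(w)) = w - \ell \cdot d(w) = w - h(d(w))$. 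Hence every cycle in degree $\geq 2$ is a boundary, so $H_p(A_*) = 0$ for $p \geq 2$.

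For the bottom degree, note that $d \colon A_1 \to A_0$ is identically zero (the empty word is not in the augmentation ideal), so every element of $A_1 = R\langle S\rangle$ is a cycle. The boundaries $\mathrm{im}(d\colon A_2 \to A_1)$ are spanned by $d(\ell_1 \ell_2) = \ell_2 - \ell_1$ for $\ell_1, \ell_2 \in S$, which is exactly the submodule identifying all letters. Therefore $H_1(A_*) = R\langle S\rangle / \langle \ell_2 - \ell_1 : \ell_i \in S\rangle \cong R$, with the class of any chosen $\ell$ as generator. Equivalently, the homotopy identity in degree $1$ reads $(dh+hd)(\ell') = d(\ell \ell') = \ell' - \ell$, directly exhibiting $\ell'$ as homologous to $\ell$.

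There is no real obstacle here beyond bookkeeping: the only thing to watch is the edge case $p = 1$, where the formula $d_1(\ell w) = w$ would land outside the augmentation ideal, which is why $dh + hd$ equals the identity only up to a correction term landing on $\ell$ — precisely the correction that pins down $H_1$.
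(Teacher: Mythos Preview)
Your proof is correct and follows essentially the same approach as the paper: both use the chain homotopy $h(w)=\ell w$ given by prepending a fixed letter, verify $dh+hd=\mathrm{id}$ in degrees $\geq 2$ via the same reindexing computation, and handle the edge case $p=1$ by observing that $(dh+hd)(\ell')=\ell'-\ell$, identifying all letters in $H_1$.
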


For shorthand, we will call the complex $A_*$ the \emph{complex of words on $S$}.

\begin{proof} Choose a letter $\ell \in S$, and consider the map $ H: A_* \xrightarrow{w \mapsto \ell w} A_*$, which has degree 1. If $w$ is a word in $S$ of length at least $2$, then $$d(H(w)) = d(\ell w) = w - \ell d(w) = w - H(d(w)).$$

A word of length $1$ is just a single letter, $x$ say, and since $d(x)=0$, $$d(H(x)) + H(d(x))=d(H(x)) = d (\ell x) = x - \ell.$$

This shows that $H$ is a chain homotopy between the identity, and the map $f$ which is zero in degrees $\geq 2$ and which in degree 1 sends each letter $x$ to $\ell$. The claim follows.
\end{proof}

\subsection{A change of perspective} \label{subsection: ChangeOfPerspective}

We want to make our complexes look more like complexes of words. We have seen that by definition we may write a graffito $x \in \CPLr(4)_{p,*}$ as $$\beta_0 \lvert \beta_1 \lvert \dots \lvert \beta_{p-1} \lvert \beta_p,$$
where $\beta_0\in S^{\vee}(0,4)$, $\beta_i \in \overline{\TL}_{4} = I_{\leq 3}$, and $\beta_p \in S(4,0)$.

A Temperley--Lieb diagram is determined by its two link states: having selected these, there is only one planar way of joining the edges up. We want to change our point of view using this observation so that the parts of the diagrams living on the bars are what's important. In pictures, this change of perspective is as follows.

\begin{center}
\begin{tikzpicture}[x=1.5cm,y=-.5cm,baseline=-0.7cm]

\def\wid{\standardWidth}
\def\hei{\standardHeight}
\def\nodesize{3}
\def\ang{90}

\node[v, minimum size=\nodesize] (11) at (0* \wid,0*\hei) {};
\node[v, minimum size=\nodesize] (12) at (0* \wid,1*\hei) {};
\node[v, minimum size=\nodesize] (13) at (0* \wid,2*\hei) {};
\node[v, minimum size=\nodesize] (14) at (0* \wid,3*\hei) {};

\draw[e] (11) to[in =180, out =180] (12);
\draw[e] (13) to[in =180, out =180] (14);

\node[v, minimum size=\nodesize] (21) at (1* \wid,0*\hei) {};
\node[v, minimum size=\nodesize] (22) at (1* \wid,1*\hei) {};
\node[v, minimum size=\nodesize] (23) at (1* \wid,2*\hei) {};
\node[v, minimum size=\nodesize] (24) at (1* \wid,3*\hei) {};

\draw[e] (12) to[in =0, out =0] (13);
\draw[e] (11) to[out = 0, in = 180] (23);
\draw[e] (14) to[out = 0, in = 180] (24);
\draw[e] (21) to[out = 180, in = 180] (22);

\node[v, minimum size=\nodesize] (31) at (2* \wid,0*\hei) {};
\node[v, minimum size=\nodesize] (32) at (2* \wid,1*\hei) {};
\node[v, minimum size=\nodesize] (33) at (2* \wid,2*\hei) {};
\node[v, minimum size=\nodesize] (34) at (2* \wid,3*\hei) {};

\draw[e] (21) to[in =0, out =0] (22);
\draw[e] (23) to[out = 0, in = 180] (31);
\draw[e] (24) to[out = 0, in = 180] (32);
\draw[e] (33) to[out = 180, in = 180] (34);

\draw[e] (31) to[in =0, out =0] (34);
\draw[e] (32) to[in =0, out =0] (33);

\node[v, minimum size=\nodesize] (41) at (3* \wid,0*\hei) {};
\node[v, minimum size=\nodesize] (42) at (3* \wid,1*\hei) {};
\node[v, minimum size=\nodesize] (43) at (3* \wid,2*\hei) {};
\node[v, minimum size=\nodesize] (44) at (3* \wid,3*\hei) {};

\draw[e] (41) to[in =180, out =180] (44);
\draw[e] (42) to[in =180, out =180] (43);

\draw[e] (41) to[in =0, out =0] (44);
\draw[e] (42) to[in =0, out =0] (43);

\draw[very thick] (0* \wid,-0.25*\hei) -- (0* \wid,3.25*\hei);
\draw[very thick] (1* \wid,-0.25*\hei) -- (1* \wid,3.25*\hei);
\draw[very thick] (2* \wid,-0.25*\hei) -- (2* \wid,3.25*\hei);
\draw[very thick] (3* \wid,-0.25*\hei) -- (3* \wid,3.25*\hei);
\end{tikzpicture}
=
\begin{tikzpicture}[x=1.5cm,y=-.5cm,baseline=-0.7cm]

\def\wid{\standardWidth}
\def\hei{\standardHeight}
\def\nodesize{3}
\def\ang{90}

\node[v, minimum size=\nodesize] (11) at (0* \wid,0*\hei) {};
\node[v, minimum size=\nodesize] (12) at (0* \wid,1*\hei) {};
\node[v, minimum size=\nodesize] (13) at (0* \wid,2*\hei) {};
\node[v, minimum size=\nodesize] (14) at (0* \wid,3*\hei) {};

\node[] (1h1) at (0.5* \wid,1*\hei) {};
\node[] (1h2) at (0.5* \wid,2*\hei) {};

\draw[e] (11) to[in =180, out =180] (12);
\draw[e] (13) to[in =180, out =180] (14);

\node[v, minimum size=\nodesize] (21) at (1* \wid,0*\hei) {};
\node[v, minimum size=\nodesize] (22) at (1* \wid,1*\hei) {};
\node[v, minimum size=\nodesize] (23) at (1* \wid,2*\hei) {};
\node[v, minimum size=\nodesize] (24) at (1* \wid,3*\hei) {};

\node[] (2h1) at (1.5* \wid,1*\hei) {};
\node[] (2h2) at (1.5* \wid,2*\hei) {};

\draw[e] (12) to[in =0, out =0] (13);
\draw[e] (11) to[out = 0, in = 180] (1h1);
\draw[e] (1h1) to[out = 0, in = 180] (23);
\draw[e] (14) to[out = 0, in = 180] (1h2);
\draw[e] (1h2) to[out = 0, in = 180] (24);
\draw[e] (21) to[out = 180, in = 180] (22);

\node[v, minimum size=\nodesize] (31) at (2* \wid,0*\hei) {};
\node[v, minimum size=\nodesize] (32) at (2* \wid,1*\hei) {};
\node[v, minimum size=\nodesize] (33) at (2* \wid,2*\hei) {};
\node[v, minimum size=\nodesize] (34) at (2* \wid,3*\hei) {};

\draw[e] (21) to[in =0, out =0] (22);
\draw[e] (23) to[out = 0, in = 180] (2h1);
\draw[e] (2h1) to[out = 0, in = 180] (31);
\draw[e] (24) to[out = 0, in = 180] (2h2);
\draw[e] (2h2) to[out = 0, in = 180] (32);
\draw[e] (33) to[out = 180, in = 180] (34);

\draw[e] (31) to[in =0, out =0] (34);
\draw[e] (32) to[in =0, out =0] (33);

\node[v, minimum size=\nodesize] (41) at (3* \wid,0*\hei) {};
\node[v, minimum size=\nodesize] (42) at (3* \wid,1*\hei) {};
\node[v, minimum size=\nodesize] (43) at (3* \wid,2*\hei) {};
\node[v, minimum size=\nodesize] (44) at (3* \wid,3*\hei) {};

\draw[e] (41) to[in =180, out =180] (44);
\draw[e] (42) to[in =180, out =180] (43);

\draw[e] (41) to[in =0, out =0] (44);
\draw[e] (42) to[in =0, out =0] (43);

\draw[very thick] (0* \wid,-0.25*\hei) -- (0* \wid,3.25*\hei);
\draw[very thick] (1* \wid,-0.25*\hei) -- (1* \wid,3.25*\hei);
\draw[very thick] (2* \wid,-0.25*\hei) -- (2* \wid,3.25*\hei);
\draw[very thick] (3* \wid,-0.25*\hei) -- (3* \wid,3.25*\hei);
\end{tikzpicture}

\end{center}

In algebraic terms, what we are doing is this. Recall from \cref{section:basicDefinitions} that we can write a diagram $\beta$ having $k$ left-to-right connections as the tensor product $\beta = \beta^L \otimes_R \beta^R$ of its left and right link states (abusing the isomorphism of \cref{lem:RtensorIso}). In this notation, our procedure is a `taking apart and recombining' which rewrites a graffito $x = \beta_0 \lvert \beta_1 \lvert \dots \lvert \beta_{p-1} \lvert \beta_p$ as follows:
\begin{align*}
    \beta_0 \lvert \beta_1 \lvert \dots \lvert \beta_{p-1} \lvert \beta_p &  = \beta_0 \lvert ( \beta_1^L \otimes \beta_1^R)  \lvert ( \beta_2^L \otimes \beta_2^R ) \lvert \dots \lvert ( \beta_{p-1}^L \otimes \beta_{p-1}^R) \lvert \beta_p \\
    & = (\beta_0 \lvert \beta_1^L) \otimes (\beta_1^R \lvert \beta_2^L) \otimes \dots \otimes (\beta_{p-1}^R \lvert \beta_p),
\end{align*}
remembering that the vertical bars are just a different notation for tensor product (though it helps here to keep the two `different' tensor products distinct). We will regard this latter expression as a word in all `letters' which could arise as one of the diagrams $(\alpha \lvert \beta_1^L), (\beta_1^R \lvert \beta_2^L) , \dots , (\beta_{p-1}^R \lvert \gamma)$.

Enumerating the possible letters gives the following alphabet.

\begin{lemma} \label{lem:alphabet} Any graffito may be represented uniquely as a word in the following letters, which we file by how many connections they have on each side.

Two connections each side:

\begin{center}
\begin{tikzpicture}[x=1.5cm,y=-.5cm,baseline=-1.05cm]

\def\wid{\standardWidth}
\def\hei{\standardHeight}
\def\nodesize{3}
\def\ang{90}

\node[] (0h1) at (-0.5* \wid,1*\hei) {};
\node[] (0h2) at (-0.5* \wid,2*\hei) {};
\node[v, minimum size=\nodesize] (11) at (0* \wid,0*\hei) {};
\node[v, minimum size=\nodesize] (12) at (0* \wid,1*\hei) {};
\node[v, minimum size=\nodesize] (13) at (0* \wid,2*\hei) {};
\node[v, minimum size=\nodesize] (14) at (0* \wid,3*\hei) {};
\node[] (1h1) at (0.5* \wid,1*\hei) {};
\node[] (1h2) at (0.5* \wid,2*\hei) {};

\draw[e] (11) to[in =0, out =180] (0h1);
\draw[e] (12) to[in =0, out =180] (0h2);
\draw[e] (13) to[in =180, out =180] (14);

\draw[e] (12) to[in =0, out =0] (13);
\draw[e] (11) to[out = 0, in = 180] (1h1);
\draw[e] (14) to[out = 0, in = 180] (1h2);

\draw[very thick] (0* \wid,-0.25*\hei) -- (0* \wid,3.25*\hei);
\end{tikzpicture}
,
\begin{tikzpicture}[x=1.5cm,y=-.5cm,baseline=-1.05cm]

\def\wid{\standardWidth}
\def\hei{\standardHeight}
\def\nodesize{3}
\def\ang{90}

\node[] (0h1) at (-0.5* \wid,1*\hei) {};
\node[] (0h2) at (-0.5* \wid,2*\hei) {};
\node[v, minimum size=\nodesize] (11) at (0* \wid,0*\hei) {};
\node[v, minimum size=\nodesize] (12) at (0* \wid,1*\hei) {};
\node[v, minimum size=\nodesize] (13) at (0* \wid,2*\hei) {};
\node[v, minimum size=\nodesize] (14) at (0* \wid,3*\hei) {};
\node[] (1h1) at (0.5* \wid,1*\hei) {};
\node[] (1h2) at (0.5* \wid,2*\hei) {};

\draw[e] (13) to[in =00, out =180] (0h1);
\draw[e] (14) to[in =0, out =180] (0h2);
\draw[e] (12) to[in =180, out =180] (11);

\draw[e] (12) to[in =0, out =0] (13);
\draw[e] (11) to[out = 0, in = 180] (1h1);
\draw[e] (14) to[out = 0, in = 180] (1h2);

\draw[very thick] (0* \wid,-0.25*\hei) -- (0* \wid,3.25*\hei);
\end{tikzpicture}
,
\begin{tikzpicture}[x=1.5cm,y=-.5cm,baseline=-1.05cm]

\def\wid{\standardWidth}
\def\hei{\standardHeight}
\def\nodesize{3}
\def\ang{90}

\node[] (0h1) at (-0.5* \wid,1*\hei) {};
\node[] (0h2) at (-0.5* \wid,2*\hei) {};
\node[v, minimum size=\nodesize] (11) at (0* \wid,0*\hei) {};
\node[v, minimum size=\nodesize] (12) at (0* \wid,1*\hei) {};
\node[v, minimum size=\nodesize] (13) at (0* \wid,2*\hei) {};
\node[v, minimum size=\nodesize] (14) at (0* \wid,3*\hei) {};
\node[] (1h1) at (0.5* \wid,1*\hei) {};
\node[] (1h2) at (0.5* \wid,2*\hei) {};

\draw[e] (11) to[in =0, out =180] (0h1);
\draw[e] (14) to[in =0, out =180] (0h2);
\draw[e] (13) to[in =180, out =180] (12);

\draw[e] (13) to[in =0, out =0] (14);
\draw[e] (11) to[out = 0, in = 180] (1h1);
\draw[e] (12) to[out = 0, in = 180] (1h2);

\draw[very thick] (0* \wid,-0.25*\hei) -- (0* \wid,3.25*\hei);
\end{tikzpicture}
,
\begin{tikzpicture}[x=1.5cm,y=-.5cm,baseline=-1.05cm]

\def\wid{\standardWidth}
\def\hei{\standardHeight}
\def\nodesize{3}
\def\ang{90}

\node[] (0h1) at (-0.5* \wid,1*\hei) {};
\node[] (0h2) at (-0.5* \wid,2*\hei) {};
\node[v, minimum size=\nodesize] (11) at (0* \wid,0*\hei) {};
\node[v, minimum size=\nodesize] (12) at (0* \wid,1*\hei) {};
\node[v, minimum size=\nodesize] (13) at (0* \wid,2*\hei) {};
\node[v, minimum size=\nodesize] (14) at (0* \wid,3*\hei) {};
\node[] (1h1) at (0.5* \wid,1*\hei) {};
\node[] (1h2) at (0.5* \wid,2*\hei) {};

\draw[e] (11) to[in =0, out =180] (0h1);
\draw[e] (14) to[in =0, out =180] (0h2);
\draw[e] (12) to[in =180, out =180] (13);

\draw[e] (12) to[in =0, out =0] (11);
\draw[e] (14) to[out = 0, in = 180] (1h2);
\draw[e] (13) to[out = 0, in = 180] (1h1);

\draw[very thick] (0* \wid,-0.25*\hei) -- (0* \wid,3.25*\hei);
\end{tikzpicture}
,
\begin{tikzpicture}[x=1.5cm,y=-.5cm,baseline=-1.05cm]

\def\wid{\standardWidth}
\def\hei{\standardHeight}
\def\nodesize{3}
\def\ang{90}

\node[] (0h1) at (-0.5* \wid,1*\hei) {};
\node[] (0h2) at (-0.5* \wid,2*\hei) {};
\node[v, minimum size=\nodesize] (11) at (0* \wid,0*\hei) {};
\node[v, minimum size=\nodesize] (12) at (0* \wid,1*\hei) {};
\node[v, minimum size=\nodesize] (13) at (0* \wid,2*\hei) {};
\node[v, minimum size=\nodesize] (14) at (0* \wid,3*\hei) {};
\node[] (1h1) at (0.5* \wid,1*\hei) {};
\node[] (1h2) at (0.5* \wid,2*\hei) {};

\draw[e] (11) to[in =0, out =180] (0h1);
\draw[e] (14) to[in =0, out =180] (0h2);
\draw[e] (13) to[in =180, out =180] (12);

\draw[e] (12) to[in =0, out =0] (13);
\draw[e] (11) to[out = 0, in = 180] (1h1);
\draw[e] (14) to[out = 0, in = 180] (1h2);

\draw[very thick] (0* \wid,-0.25*\hei) -- (0* \wid,3.25*\hei);
\end{tikzpicture}
,
\begin{tikzpicture}[x=1.5cm,y=-.5cm,baseline=-1.05cm]

\def\wid{\standardWidth}
\def\hei{\standardHeight}
\def\nodesize{3}
\def\ang{90}

\node[] (0h1) at (-0.5* \wid,1*\hei) {};
\node[] (0h2) at (-0.5* \wid,2*\hei) {};
\node[v, minimum size=\nodesize] (11) at (0* \wid,0*\hei) {};
\node[v, minimum size=\nodesize] (12) at (0* \wid,1*\hei) {};
\node[v, minimum size=\nodesize] (13) at (0* \wid,2*\hei) {};
\node[v, minimum size=\nodesize] (14) at (0* \wid,3*\hei) {};
\node[] (1h1) at (0.5* \wid,1*\hei) {};
\node[] (1h2) at (0.5* \wid,2*\hei) {};

\draw[e] (11) to[in =0, out =180] (0h1);
\draw[e] (12) to[in =0, out =180] (0h2);
\draw[e] (13) to[in =180, out =180] (14);

\draw[e] (13) to[in =0, out =0] (14);
\draw[e] (11) to[out = 0, in = 180] (1h1);
\draw[e] (12) to[out = 0, in = 180] (1h2);

\draw[very thick] (0* \wid,-0.25*\hei) -- (0* \wid,3.25*\hei);
\end{tikzpicture}
,
\begin{tikzpicture}[x=1.5cm,y=-.5cm,baseline=-1.05cm]

\def\wid{\standardWidth}
\def\hei{\standardHeight}
\def\nodesize{3}
\def\ang{90}

\node[] (0h1) at (-0.5* \wid,1*\hei) {};
\node[] (0h2) at (-0.5* \wid,2*\hei) {};
\node[v, minimum size=\nodesize] (11) at (0* \wid,0*\hei) {};
\node[v, minimum size=\nodesize] (12) at (0* \wid,1*\hei) {};
\node[v, minimum size=\nodesize] (13) at (0* \wid,2*\hei) {};
\node[v, minimum size=\nodesize] (14) at (0* \wid,3*\hei) {};
\node[] (1h1) at (0.5* \wid,1*\hei) {};
\node[] (1h2) at (0.5* \wid,2*\hei) {};

\draw[e] (12) to[in =180, out =180] (11);
\draw[e] (13) to[in =0, out =180] (0h1);
\draw[e] (14) to[in =0, out =180] (0h2);

\draw[e] (12) to[in =0, out =0] (11);
\draw[e] (14) to[out = 0, in = 180] (1h2);
\draw[e] (13) to[out = 0, in = 180] (1h1);

\draw[very thick] (0* \wid,-0.25*\hei) -- (0* \wid,3.25*\hei);
\end{tikzpicture}
,
\begin{tikzpicture}[x=1.5cm,y=-.5cm,baseline=-1.05cm]

\def\wid{\standardWidth}
\def\hei{\standardHeight}
\def\nodesize{3}
\def\ang{90}

\node[] (0h1) at (-0.5* \wid,1*\hei) {};
\node[] (0h2) at (-0.5* \wid,2*\hei) {};
\node[v, minimum size=\nodesize] (11) at (0* \wid,0*\hei) {};
\node[v, minimum size=\nodesize] (12) at (0* \wid,1*\hei) {};
\node[v, minimum size=\nodesize] (13) at (0* \wid,2*\hei) {};
\node[v, minimum size=\nodesize] (14) at (0* \wid,3*\hei) {};
\node[] (1h1) at (0.5* \wid,1*\hei) {};
\node[] (1h2) at (0.5* \wid,2*\hei) {};

\draw[e] (13) to[in =0, out =180] (0h1);
\draw[e] (14) to[in =0, out =180] (0h2);
\draw[e] (11) to[in =180, out =180] (12);

\draw[e] (13) to[in =0, out =0] (14);
\draw[e] (11) to[out = 0, in = 180] (1h1);
\draw[e] (12) to[out = 0, in = 180] (1h2);

\draw[very thick] (0* \wid,-0.25*\hei) -- (0* \wid,3.25*\hei);
\end{tikzpicture}
,
\begin{tikzpicture}[x=1.5cm,y=-.5cm,baseline=-1.05cm]

\def\wid{\standardWidth}
\def\hei{\standardHeight}
\def\nodesize{3}
\def\ang{90}

\node[] (0h1) at (-0.5* \wid,1*\hei) {};
\node[] (0h2) at (-0.5* \wid,2*\hei) {};
\node[v, minimum size=\nodesize] (11) at (0* \wid,0*\hei) {};
\node[v, minimum size=\nodesize] (12) at (0* \wid,1*\hei) {};
\node[v, minimum size=\nodesize] (13) at (0* \wid,2*\hei) {};
\node[v, minimum size=\nodesize] (14) at (0* \wid,3*\hei) {};
\node[] (1h1) at (0.5* \wid,1*\hei) {};
\node[] (1h2) at (0.5* \wid,2*\hei) {};

\draw[e] (13) to[in =180, out =180] (14);
\draw[e] (11) to[in =0, out =180] (0h1);
\draw[e] (12) to[in =0, out =180] (0h2);

\draw[e] (12) to[in =0, out =0] (11);
\draw[e] (14) to[out = 0, in = 180] (1h2);
\draw[e] (13) to[out = 0, in = 180] (1h1);

\draw[very thick] (0* \wid,-0.25*\hei) -- (0* \wid,3.25*\hei);
\end{tikzpicture}
\end{center}

No connections on the left, two on the right:

\begin{center}
\begin{tikzpicture}[x=1.5cm,y=-.5cm,baseline=-1.05cm]

\def\wid{\standardWidth}
\def\hei{\standardHeight}
\def\nodesize{3}
\def\ang{90}

\node[] (0h1) at (0.5* \wid,1*\hei) {};
\node[] (0h2) at (0.5* \wid,2*\hei) {};
\node[v, minimum size=\nodesize] (11) at (0* \wid,0*\hei) {};
\node[v, minimum size=\nodesize] (12) at (0* \wid,1*\hei) {};
\node[v, minimum size=\nodesize] (13) at (0* \wid,2*\hei) {};
\node[v, minimum size=\nodesize] (14) at (0* \wid,3*\hei) {};
\node[] (1h1) at (0.5* \wid,1*\hei) {};
\node[] (1h2) at (0.5* \wid,2*\hei) {};

\draw[e] (11) to[in =180, out =180] (12);
\draw[e] (13) to[in =180, out =180] (14);

\draw[e] (12) to[in =0, out =0] (13);
\draw[e] (11) to[out = 0, in = 180] (1h1);
\draw[e] (14) to[out = 0, in = 180] (1h2);

\draw[very thick] (0* \wid,-0.25*\hei) -- (0* \wid,3.25*\hei);
\end{tikzpicture}
,
\begin{tikzpicture}[x=1.5cm,y=-.5cm,baseline=-1.05cm]

\def\wid{\standardWidth}
\def\hei{\standardHeight}
\def\nodesize{3}
\def\ang{90}

\node[] (0h1) at (-0.5* \wid,1*\hei) {};
\node[] (0h2) at (-0.5* \wid,2*\hei) {};
\node[v, minimum size=\nodesize] (11) at (0* \wid,0*\hei) {};
\node[v, minimum size=\nodesize] (12) at (0* \wid,1*\hei) {};
\node[v, minimum size=\nodesize] (13) at (0* \wid,2*\hei) {};
\node[v, minimum size=\nodesize] (14) at (0* \wid,3*\hei) {};
\node[] (1h1) at (0.5* \wid,1*\hei) {};
\node[] (1h2) at (0.5* \wid,2*\hei) {};

\draw[e] (11) to[in =180, out =180] (14);
\draw[e] (13) to[in =180, out =180] (12);

\draw[e] (13) to[in =0, out =0] (14);
\draw[e] (11) to[out = 0, in = 180] (1h1);
\draw[e] (12) to[out = 0, in = 180] (1h2);

\draw[very thick] (0* \wid,-0.25*\hei) -- (0* \wid,3.25*\hei);
\end{tikzpicture}
,
\begin{tikzpicture}[x=1.5cm,y=-.5cm,baseline=-1.05cm]

\def\wid{\standardWidth}
\def\hei{\standardHeight}
\def\nodesize{3}
\def\ang{90}

\node[] (0h1) at (-0.5* \wid,1*\hei) {};
\node[] (0h2) at (-0.5* \wid,2*\hei) {};
\node[v, minimum size=\nodesize] (11) at (0* \wid,0*\hei) {};
\node[v, minimum size=\nodesize] (12) at (0* \wid,1*\hei) {};
\node[v, minimum size=\nodesize] (13) at (0* \wid,2*\hei) {};
\node[v, minimum size=\nodesize] (14) at (0* \wid,3*\hei) {};
\node[] (1h1) at (0.5* \wid,1*\hei) {};
\node[] (1h2) at (0.5* \wid,2*\hei) {};

\draw[e] (14) to[in =180, out =180] (11);
\draw[e] (12) to[in =180, out =180] (13);

\draw[e] (12) to[in =0, out =0] (11);
\draw[e] (14) to[out = 0, in = 180] (1h2);
\draw[e] (13) to[out = 0, in = 180] (1h1);

\draw[very thick] (0* \wid,-0.25*\hei) -- (0* \wid,3.25*\hei);
\end{tikzpicture}
,
\begin{tikzpicture}[x=1.5cm,y=-.5cm,baseline=-1.05cm]

\def\wid{\standardWidth}
\def\hei{\standardHeight}
\def\nodesize{3}
\def\ang{90}

\node[] (0h1) at (0.5* \wid,1*\hei) {};
\node[] (0h2) at (0.5* \wid,2*\hei) {};
\node[v, minimum size=\nodesize] (11) at (0* \wid,0*\hei) {};
\node[v, minimum size=\nodesize] (12) at (0* \wid,1*\hei) {};
\node[v, minimum size=\nodesize] (13) at (0* \wid,2*\hei) {};
\node[v, minimum size=\nodesize] (14) at (0* \wid,3*\hei) {};
\node[] (1h1) at (0.5* \wid,1*\hei) {};
\node[] (1h2) at (0.5* \wid,2*\hei) {};

\draw[e] (11) to[in =180, out =180] (14);
\draw[e] (13) to[in =180, out =180] (12);

\draw[e] (12) to[in =0, out =0] (13);
\draw[e] (11) to[out = 0, in = 180] (1h1);
\draw[e] (14) to[out = 0, in = 180] (1h2);

\draw[very thick] (0* \wid,-0.25*\hei) -- (0* \wid,3.25*\hei);
\end{tikzpicture}
,
\begin{tikzpicture}[x=1.5cm,y=-.5cm,baseline=-1.05cm]

\def\wid{\standardWidth}
\def\hei{\standardHeight}
\def\nodesize{3}
\def\ang{90}

\node[] (0h1) at (-0.5* \wid,1*\hei) {};
\node[] (0h2) at (-0.5* \wid,2*\hei) {};
\node[v, minimum size=\nodesize] (11) at (0* \wid,0*\hei) {};
\node[v, minimum size=\nodesize] (12) at (0* \wid,1*\hei) {};
\node[v, minimum size=\nodesize] (13) at (0* \wid,2*\hei) {};
\node[v, minimum size=\nodesize] (14) at (0* \wid,3*\hei) {};
\node[] (1h1) at (0.5* \wid,1*\hei) {};
\node[] (1h2) at (0.5* \wid,2*\hei) {};

\draw[e] (11) to[in =180, out =180] (12);
\draw[e] (13) to[in =180, out =180] (14);

\draw[e] (13) to[in =0, out =0] (14);
\draw[e] (11) to[out = 0, in = 180] (1h1);
\draw[e] (12) to[out = 0, in = 180] (1h2);

\draw[very thick] (0* \wid,-0.25*\hei) -- (0* \wid,3.25*\hei);
\end{tikzpicture}
,
\begin{tikzpicture}[x=1.5cm,y=-.5cm,baseline=-1.05cm]

\def\wid{\standardWidth}
\def\hei{\standardHeight}
\def\nodesize{3}
\def\ang{90}

\node[] (0h1) at (-0.5* \wid,1*\hei) {};
\node[] (0h2) at (-0.5* \wid,2*\hei) {};
\node[v, minimum size=\nodesize] (11) at (0* \wid,0*\hei) {};
\node[v, minimum size=\nodesize] (12) at (0* \wid,1*\hei) {};
\node[v, minimum size=\nodesize] (13) at (0* \wid,2*\hei) {};
\node[v, minimum size=\nodesize] (14) at (0* \wid,3*\hei) {};
\node[] (1h1) at (0.5* \wid,1*\hei) {};
\node[] (1h2) at (0.5* \wid,2*\hei) {};

\draw[e] (12) to[in =180, out =180] (11);
\draw[e] (14) to[in =180, out =180] (13);

\draw[e] (12) to[in =0, out =0] (11);
\draw[e] (14) to[out = 0, in = 180] (1h2);
\draw[e] (13) to[out = 0, in = 180] (1h1);

\draw[very thick] (0* \wid,-0.25*\hei) -- (0* \wid,3.25*\hei);
\end{tikzpicture}
\end{center}

No connections on the right, two on the left (the reflections of the row above):

\begin{center}
    \begin{tikzpicture}[x=1.5cm,y=-.5cm,baseline=-1.05cm]

\def\wid{\standardWidth}
\def\hei{\standardHeight}
\def\nodesize{3}
\def\ang{90}

\node[] (0h1) at (-0.5* \wid,1*\hei) {};
\node[] (0h2) at (-0.5* \wid,2*\hei) {};
\node[v, minimum size=\nodesize] (11) at (0* \wid,0*\hei) {};
\node[v, minimum size=\nodesize] (12) at (0* \wid,1*\hei) {};
\node[v, minimum size=\nodesize] (13) at (0* \wid,2*\hei) {};
\node[v, minimum size=\nodesize] (14) at (0* \wid,3*\hei) {};
\node[] (1h1) at (0.5* \wid,1*\hei) {};
\node[] (1h2) at (0.5* \wid,2*\hei) {};

\draw[e] (11) to[in =0, out =0] (12);
\draw[e] (13) to[in =0, out =0] (14);

\draw[e] (12) to[in =180, out =180] (13);
\draw[e] (11) to[out = 180, in = 0] (0h1);
\draw[e] (14) to[out = 180, in = 0] (0h2);

\draw[very thick] (0* \wid,-0.25*\hei) -- (0* \wid,3.25*\hei);
\end{tikzpicture}
,
\begin{tikzpicture}[x=1.5cm,y=-.5cm,baseline=-1.05cm]

\def\wid{\standardWidth}
\def\hei{\standardHeight}
\def\nodesize{3}
\def\ang{90}

\node[] (0h1) at (-0.5* \wid,1*\hei) {};
\node[] (0h2) at (-0.5* \wid,2*\hei) {};
\node[v, minimum size=\nodesize] (11) at (0* \wid,0*\hei) {};
\node[v, minimum size=\nodesize] (12) at (0* \wid,1*\hei) {};
\node[v, minimum size=\nodesize] (13) at (0* \wid,2*\hei) {};
\node[v, minimum size=\nodesize] (14) at (0* \wid,3*\hei) {};
\node[] (1h1) at (0.5* \wid,1*\hei) {};
\node[] (1h2) at (0.5* \wid,2*\hei) {};

\draw[e] (11) to[in =0, out =0] (14);
\draw[e] (13) to[in =0, out =0] (12);

\draw[e] (13) to[in =180, out =180] (14);
\draw[e] (11) to[out = 180, in = 0] (0h1);
\draw[e] (12) to[out = 180, in = 0] (0h2);

\draw[very thick] (0* \wid,-0.25*\hei) -- (0* \wid,3.25*\hei);
\end{tikzpicture}
,
\begin{tikzpicture}[x=1.5cm,y=-.5cm,baseline=-1.05cm]

\def\wid{\standardWidth}
\def\hei{\standardHeight}
\def\nodesize{3}
\def\ang{90}

\node[] (0h1) at (-0.5* \wid,1*\hei) {};
\node[] (0h2) at (-0.5* \wid,2*\hei) {};
\node[v, minimum size=\nodesize] (11) at (0* \wid,0*\hei) {};
\node[v, minimum size=\nodesize] (12) at (0* \wid,1*\hei) {};
\node[v, minimum size=\nodesize] (13) at (0* \wid,2*\hei) {};
\node[v, minimum size=\nodesize] (14) at (0* \wid,3*\hei) {};
\node[] (1h1) at (0.5* \wid,1*\hei) {};
\node[] (1h2) at (0.5* \wid,2*\hei) {};

\draw[e] (14) to[in =0, out =0] (11);
\draw[e] (12) to[in =0, out =0] (13);

\draw[e] (12) to[in =180, out =180] (11);
\draw[e] (14) to[out = 180, in = 0] (0h2);
\draw[e] (13) to[out = 180, in = 0] (0h1);

\draw[very thick] (0* \wid,-0.25*\hei) -- (0* \wid,3.25*\hei);
\end{tikzpicture}
,
\begin{tikzpicture}[x=1.5cm,y=-.5cm,baseline=-1.05cm]

\def\wid{\standardWidth}
\def\hei{\standardHeight}
\def\nodesize{3}
\def\ang{90}

\node[] (0h1) at (-0.5* \wid,1*\hei) {};
\node[] (0h2) at (-0.5* \wid,2*\hei) {};
\node[v, minimum size=\nodesize] (11) at (0* \wid,0*\hei) {};
\node[v, minimum size=\nodesize] (12) at (0* \wid,1*\hei) {};
\node[v, minimum size=\nodesize] (13) at (0* \wid,2*\hei) {};
\node[v, minimum size=\nodesize] (14) at (0* \wid,3*\hei) {};
\node[] (1h1) at (0.5* \wid,1*\hei) {};
\node[] (1h2) at (0.5* \wid,2*\hei) {};

\draw[e] (11) to[in =0, out =0] (14);
\draw[e] (12) to[in =0, out =0] (13);

\draw[e] (12) to[in =180, out =180] (13);
\draw[e] (11) to[out = 180, in = 0] (0h1);
\draw[e] (14) to[out = 180, in = 0] (0h2);

\draw[very thick] (0* \wid,-0.25*\hei) -- (0* \wid,3.25*\hei);
\end{tikzpicture}
,
\begin{tikzpicture}[x=1.5cm,y=-.5cm,baseline=-1.05cm]

\def\wid{\standardWidth}
\def\hei{\standardHeight}
\def\nodesize{3}
\def\ang{90}

\node[] (0h1) at (-0.5* \wid,1*\hei) {};
\node[] (0h2) at (-0.5* \wid,2*\hei) {};
\node[v, minimum size=\nodesize] (11) at (0* \wid,0*\hei) {};
\node[v, minimum size=\nodesize] (12) at (0* \wid,1*\hei) {};
\node[v, minimum size=\nodesize] (13) at (0* \wid,2*\hei) {};
\node[v, minimum size=\nodesize] (14) at (0* \wid,3*\hei) {};
\node[] (1h1) at (0.5* \wid,1*\hei) {};
\node[] (1h2) at (0.5* \wid,2*\hei) {};

\draw[e] (11) to[in =0, out =0] (12);
\draw[e] (13) to[in =0, out =0] (14);

\draw[e] (13) to[in =180, out =180] (14);
\draw[e] (11) to[out = 180, in = 0] (0h1);
\draw[e] (12) to[out = 180, in = 0] (0h2);

\draw[very thick] (0* \wid,-0.25*\hei) -- (0* \wid,3.25*\hei);
\end{tikzpicture}
,
\begin{tikzpicture}[x=1.5cm,y=-.5cm,baseline=-1.05cm]

\def\wid{\standardWidth}
\def\hei{\standardHeight}
\def\nodesize{3}
\def\ang{90}

\node[] (0h1) at (-0.5* \wid,1*\hei) {};
\node[] (0h2) at (-0.5* \wid,2*\hei) {};
\node[v, minimum size=\nodesize] (11) at (0* \wid,0*\hei) {};
\node[v, minimum size=\nodesize] (12) at (0* \wid,1*\hei) {};
\node[v, minimum size=\nodesize] (13) at (0* \wid,2*\hei) {};
\node[v, minimum size=\nodesize] (14) at (0* \wid,3*\hei) {};
\node[] (1h1) at (0.5* \wid,1*\hei) {};
\node[] (1h2) at (0.5* \wid,2*\hei) {};

\draw[e] (14) to[in =0, out =0] (13);
\draw[e] (12) to[in =0, out =0] (11);

\draw[e] (12) to[in =180, out =180] (11);
\draw[e] (14) to[out = 180, in = 0] (0h2);
\draw[e] (13) to[out = 180, in = 0] (0h1);

\draw[very thick] (0* \wid,-0.25*\hei) -- (0* \wid,3.25*\hei);
\end{tikzpicture}
\end{center}

No connections each side:

\begin{center}
\begin{tikzpicture}[x=1.5cm,y=-.5cm,baseline=-1.05cm]
\def\wid{\standardWidth}
\def\hei{\standardHeight}
\def\nodesize{3}
\def\ang{90}

\node[v, minimum size=\nodesize] (11) at (0* \wid,0*\hei) {};
\node[v, minimum size=\nodesize] (12) at (0* \wid,1*\hei) {};
\node[v, minimum size=\nodesize] (13) at (0* \wid,2*\hei) {};
\node[v, minimum size=\nodesize] (14) at (0* \wid,3*\hei) {};

\draw[e] (11) to[in =180, out =180] (12);
\draw[e] (13) to[in =180, out =180] (14);

\draw[e] (12) to[in =0, out =0] (13);
\draw[e] (11) to[out = 0, in = 0] (14);

\draw[very thick] (0* \wid,-0.25*\hei) -- (0* \wid,3.25*\hei);
\end{tikzpicture}
,
\quad
\begin{tikzpicture}[x=1.5cm,y=-.5cm,baseline=-1.05cm]
\def\wid{\standardWidth}
\def\hei{\standardHeight}
\def\nodesize{3}
\def\ang{90}

\node[v, minimum size=\nodesize] (11) at (0* \wid,0*\hei) {};
\node[v, minimum size=\nodesize] (12) at (0* \wid,1*\hei) {};
\node[v, minimum size=\nodesize] (13) at (0* \wid,2*\hei) {};
\node[v, minimum size=\nodesize] (14) at (0* \wid,3*\hei) {};

\draw[e] (11) to[in =0, out =0] (12);
\draw[e] (13) to[in =0, out =0] (14);

\draw[e] (12) to[in =180, out =180] (13);
\draw[e] (11) to[out = 180, in = 180] (14);

\draw[very thick] (0* \wid,-0.25*\hei) -- (0* \wid,3.25*\hei);
\end{tikzpicture}
,
\quad
\begin{tikzpicture}[x=1.5cm,y=-.5cm,baseline=-1.05cm]
\def\wid{\standardWidth}
\def\hei{\standardHeight}
\def\nodesize{3}
\def\ang{90}

\node[v, minimum size=\nodesize] (11) at (0* \wid,0*\hei) {};
\node[v, minimum size=\nodesize] (12) at (0* \wid,1*\hei) {};
\node[v, minimum size=\nodesize] (13) at (0* \wid,2*\hei) {};
\node[v, minimum size=\nodesize] (14) at (0* \wid,3*\hei) {};

\draw[e] (11) to[in =180, out =180] (14);
\draw[e] (13) to[in =180, out =180] (12);

\draw[e] (12) to[in =0, out =0] (13);
\draw[e] (11) to[out = 0, in = 0] (14);

\draw[very thick] (0* \wid,-0.25*\hei) -- (0* \wid,3.25*\hei);
\end{tikzpicture}
,
\quad
\begin{tikzpicture}[x=1.5cm,y=-.5cm,baseline=-1.05cm]
\def\wid{\standardWidth}
\def\hei{\standardHeight}
\def\nodesize{3}
\def\ang{90}

\node[v, minimum size=\nodesize] (11) at (0* \wid,0*\hei) {};
\node[v, minimum size=\nodesize] (12) at (0* \wid,1*\hei) {};
\node[v, minimum size=\nodesize] (13) at (0* \wid,2*\hei) {};
\node[v, minimum size=\nodesize] (14) at (0* \wid,3*\hei) {};

\draw[e] (11) to[in =180, out =180] (12);
\draw[e] (13) to[in =180, out =180] (14);

\draw[e] (11) to[in =0, out =0] (12);
\draw[e] (13) to[out = 0, in = 0] (14);

\draw[very thick] (0* \wid,-0.25*\hei) -- (0* \wid,3.25*\hei);
\end{tikzpicture}
\end{center} 
\vspace{-0.35cm}
\qed
\end{lemma}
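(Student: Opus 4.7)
The plan is to establish two things: first, that the rewriting procedure just described yields a well-defined and unique word decomposition for each graffito; and second, that the letters produced are exactly the ones in the four displayed lists.

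For the first step, start from a graffito $\beta_0 \lvert \beta_1 \lvert \cdots \lvert \beta_p$ with $\beta_0 \in S^\vee(0,4)$, $\beta_p \in S(4,0)$, and $\beta_i \in \overline{\TL}_4 = I_{\leq 3}$ for $1 \leq i \leq p-1$. The number of left-to-right connections of a $\TL_4$-diagram has the same parity as $4$, and the identity (which has $4$ such connections) is excluded, so each interior $\beta_i$ has $k_i \in \{0, 2\}$ left-to-right connections. By \cref{lem:RtensorIso} each $\beta_i$ decomposes uniquely as $\beta_i^L \otimes \beta_i^R$; regrouping halves across bar-boundaries yields the word of letters $(\beta_0 \lvert \beta_1^L), (\beta_1^R \lvert \beta_2^L), \ldots, (\beta_{p-1}^R \lvert \beta_p)$. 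Both directions of the correspondence amount only to splitting or gluing link states, so the word is uniquely determined by the graffito.

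For the second step, classify letters by the pair $(k, k')$ of hanging half-edges on the left and right, each of which lies in $\{0, 2\}$. A letter is specified independently by one link state on each side, so the number of letters of each type is the product of the corresponding cell-module ranks. These ranks are $\lvert S(4,0) \rvert = \lvert S^\vee(0,4) \rvert = C_2 = 2$ (the two non-crossing matchings of $4$ points on a bar) and $\lvert S(4,2) \rvert = \lvert S^\vee(2,4) \rvert = 3$ (three choices for the one left-to-left cup, the two propagating arcs then being forced by planarity). The resulting counts in the cases $(k,k') = (2,2), (0,2), (2,0), (0,0)$ are $9, 6, 6, 4$, matching the sizes of the four displayed rows.

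The remaining work is to verify that the displayed pictures within each row are pairwise distinct and exhaust each case, which is a direct finite check rather than a genuine obstacle. The real content of the lemma is to fix a convenient notation with which to apply \cref{lem:basicObs} in the subsequent analysis.
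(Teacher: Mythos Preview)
Your proposal is correct and follows the same approach as the paper. The paper treats the lemma as immediate from the preceding discussion (ending simply with a \qed after ``Enumerating the possible letters gives the following alphabet''), while you have spelled out the enumeration more carefully via the counting argument $|S^\vee(k,4)| \cdot |S(4,k')|$ for each pair $(k,k') \in \{0,2\}^2$, which is a nice way to confirm completeness without drawing every case.
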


Note that not all words in these letters correspond to allowed graffiti: consecutive letters must have the same number of connections. The point of working in the reduced bar construction $\CPLr(4)_{*,*} \subset \CPL(4)_{*,*}$ is that we do not need letters with four connections on a side. This shortens the alphabet a lot.

\subsection{Part (\ref{part:1})}

In this subsection we will prove \cref{thm:mainTechnical} (\ref{part:1}). We must show that the complex $C_*[1,0]$ of graffiti with no dividers and one loop is contractible.

\cref{lem:alphabet} tells us that each basis element of this complex in degree $p$ may be written as a word of length $p$ in the letters of that lemma. Not all of these letters can appear in a word corresponding to a one-loop graffito with no dividers. For example, any graffito containing the letter
\begin{center}
\begin{tikzpicture}[x=1.5cm,y=-.5cm,baseline=-1.05cm]
\def\wid{\standardWidth}
\def\hei{\standardHeight}
\def\nodesize{3}
\def\ang{90}

\node[] (0h1) at (-0.5* \wid,1*\hei) {};
\node[] (0h2) at (-0.5* \wid,2*\hei) {};
\node[v, minimum size=\nodesize] (11) at (0* \wid,0*\hei) {};
\node[v, minimum size=\nodesize] (12) at (0* \wid,1*\hei) {};
\node[v, minimum size=\nodesize] (13) at (0* \wid,2*\hei) {};
\node[v, minimum size=\nodesize] (14) at (0* \wid,3*\hei) {};
\node[] (1h1) at (0.5* \wid,1*\hei) {};
\node[] (1h2) at (0.5* \wid,2*\hei) {};

\draw[e] (12) to[in =180, out =180] (11);
\draw[e] (13) to[in =0, out =180] (0h1);
\draw[e] (14) to[in =0, out =180] (0h2);

\draw[e] (12) to[in =0, out =0] (11);
\draw[e] (14) to[out = 0, in = 180] (1h2);
\draw[e] (13) to[out = 0, in = 180] (1h1);

\draw[very thick] (0* \wid,-0.25*\hei) -- (0* \wid,3.25*\hei);
\end{tikzpicture}
\end{center}
must have at least two loops. Precisely, we have the following.

\begin{lemma} \label{lem:oneLoopAlphabet} The elements of the graffiti basis of the complex $C_*[1,0]$ with one loop and no dividers are precisely the words in the following letters, which are a subset of those of \cref{lem:alphabet}, subject to the following rules.

Two connections each side:

\begin{center}
\begin{tikzpicture}[x=1.5cm,y=-.5cm,baseline=-1.05cm]

\def\wid{\standardWidth}
\def\hei{\standardHeight}
\def\nodesize{3}
\def\ang{90}

\node[] (0h1) at (-0.5* \wid,1*\hei) {};
\node[] (0h2) at (-0.5* \wid,2*\hei) {};
\node[v, minimum size=\nodesize] (11) at (0* \wid,0*\hei) {};
\node[v, minimum size=\nodesize] (12) at (0* \wid,1*\hei) {};
\node[v, minimum size=\nodesize] (13) at (0* \wid,2*\hei) {};
\node[v, minimum size=\nodesize] (14) at (0* \wid,3*\hei) {};
\node[] (1h1) at (0.5* \wid,1*\hei) {};
\node[] (1h2) at (0.5* \wid,2*\hei) {};

\draw[e] (11) to[in =0, out =180] (0h1);
\draw[e] (12) to[in =0, out =180] (0h2);
\draw[e] (13) to[in =180, out =180] (14);

\draw[e] (12) to[in =0, out =0] (13);
\draw[e] (11) to[out = 0, in = 180] (1h1);
\draw[e] (14) to[out = 0, in = 180] (1h2);

\draw[very thick] (0* \wid,-0.25*\hei) -- (0* \wid,3.25*\hei);
\end{tikzpicture}
,
\begin{tikzpicture}[x=1.5cm,y=-.5cm,baseline=-1.05cm]

\def\wid{\standardWidth}
\def\hei{\standardHeight}
\def\nodesize{3}
\def\ang{90}

\node[] (0h1) at (-0.5* \wid,1*\hei) {};
\node[] (0h2) at (-0.5* \wid,2*\hei) {};
\node[v, minimum size=\nodesize] (11) at (0* \wid,0*\hei) {};
\node[v, minimum size=\nodesize] (12) at (0* \wid,1*\hei) {};
\node[v, minimum size=\nodesize] (13) at (0* \wid,2*\hei) {};
\node[v, minimum size=\nodesize] (14) at (0* \wid,3*\hei) {};
\node[] (1h1) at (0.5* \wid,1*\hei) {};
\node[] (1h2) at (0.5* \wid,2*\hei) {};

\draw[e] (13) to[in =00, out =180] (0h1);
\draw[e] (14) to[in =0, out =180] (0h2);
\draw[e] (12) to[in =180, out =180] (11);

\draw[e] (12) to[in =0, out =0] (13);
\draw[e] (11) to[out = 0, in = 180] (1h1);
\draw[e] (14) to[out = 0, in = 180] (1h2);

\draw[very thick] (0* \wid,-0.25*\hei) -- (0* \wid,3.25*\hei);
\end{tikzpicture}
,
\begin{tikzpicture}[x=1.5cm,y=-.5cm,baseline=-1.05cm]

\def\wid{\standardWidth}
\def\hei{\standardHeight}
\def\nodesize{3}
\def\ang{90}

\node[] (0h1) at (-0.5* \wid,1*\hei) {};
\node[] (0h2) at (-0.5* \wid,2*\hei) {};
\node[v, minimum size=\nodesize] (11) at (0* \wid,0*\hei) {};
\node[v, minimum size=\nodesize] (12) at (0* \wid,1*\hei) {};
\node[v, minimum size=\nodesize] (13) at (0* \wid,2*\hei) {};
\node[v, minimum size=\nodesize] (14) at (0* \wid,3*\hei) {};
\node[] (1h1) at (0.5* \wid,1*\hei) {};
\node[] (1h2) at (0.5* \wid,2*\hei) {};

\draw[e] (11) to[in =0, out =180] (0h1);
\draw[e] (14) to[in =0, out =180] (0h2);
\draw[e] (13) to[in =180, out =180] (12);

\draw[e] (13) to[in =0, out =0] (14);
\draw[e] (11) to[out = 0, in = 180] (1h1);
\draw[e] (12) to[out = 0, in = 180] (1h2);

\draw[very thick] (0* \wid,-0.25*\hei) -- (0* \wid,3.25*\hei);
\end{tikzpicture}
,
\begin{tikzpicture}[x=1.5cm,y=-.5cm,baseline=-1.05cm]

\def\wid{\standardWidth}
\def\hei{\standardHeight}
\def\nodesize{3}
\def\ang{90}

\node[] (0h1) at (-0.5* \wid,1*\hei) {};
\node[] (0h2) at (-0.5* \wid,2*\hei) {};
\node[v, minimum size=\nodesize] (11) at (0* \wid,0*\hei) {};
\node[v, minimum size=\nodesize] (12) at (0* \wid,1*\hei) {};
\node[v, minimum size=\nodesize] (13) at (0* \wid,2*\hei) {};
\node[v, minimum size=\nodesize] (14) at (0* \wid,3*\hei) {};
\node[] (1h1) at (0.5* \wid,1*\hei) {};
\node[] (1h2) at (0.5* \wid,2*\hei) {};

\draw[e] (11) to[in =0, out =180] (0h1);
\draw[e] (14) to[in =0, out =180] (0h2);
\draw[e] (12) to[in =180, out =180] (13);

\draw[e] (12) to[in =0, out =0] (11);
\draw[e] (14) to[out = 0, in = 180] (1h2);
\draw[e] (13) to[out = 0, in = 180] (1h1);

\draw[very thick] (0* \wid,-0.25*\hei) -- (0* \wid,3.25*\hei);
\end{tikzpicture}
\end{center}

No connections on the left, two on the right:

\begin{center}
\begin{tikzpicture}[x=1.5cm,y=-.5cm,baseline=-1.05cm]

\def\wid{\standardWidth}
\def\hei{\standardHeight}
\def\nodesize{3}
\def\ang{90}

\node[] (0h1) at (0.5* \wid,1*\hei) {};
\node[] (0h2) at (0.5* \wid,2*\hei) {};
\node[v, minimum size=\nodesize] (11) at (0* \wid,0*\hei) {};
\node[v, minimum size=\nodesize] (12) at (0* \wid,1*\hei) {};
\node[v, minimum size=\nodesize] (13) at (0* \wid,2*\hei) {};
\node[v, minimum size=\nodesize] (14) at (0* \wid,3*\hei) {};
\node[] (1h1) at (0.5* \wid,1*\hei) {};
\node[] (1h2) at (0.5* \wid,2*\hei) {};

\draw[e] (11) to[in =180, out =180] (12);
\draw[e] (13) to[in =180, out =180] (14);

\draw[e] (12) to[in =0, out =0] (13);
\draw[e] (11) to[out = 0, in = 180] (1h1);
\draw[e] (14) to[out = 0, in = 180] (1h2);

\draw[very thick] (0* \wid,-0.25*\hei) -- (0* \wid,3.25*\hei);
\end{tikzpicture}
,
\begin{tikzpicture}[x=1.5cm,y=-.5cm,baseline=-1.05cm]

\def\wid{\standardWidth}
\def\hei{\standardHeight}
\def\nodesize{3}
\def\ang{90}

\node[] (0h1) at (-0.5* \wid,1*\hei) {};
\node[] (0h2) at (-0.5* \wid,2*\hei) {};
\node[v, minimum size=\nodesize] (11) at (0* \wid,0*\hei) {};
\node[v, minimum size=\nodesize] (12) at (0* \wid,1*\hei) {};
\node[v, minimum size=\nodesize] (13) at (0* \wid,2*\hei) {};
\node[v, minimum size=\nodesize] (14) at (0* \wid,3*\hei) {};
\node[] (1h1) at (0.5* \wid,1*\hei) {};
\node[] (1h2) at (0.5* \wid,2*\hei) {};

\draw[e] (11) to[in =180, out =180] (14);
\draw[e] (13) to[in =180, out =180] (12);

\draw[e] (13) to[in =0, out =0] (14);
\draw[e] (11) to[out = 0, in = 180] (1h1);
\draw[e] (12) to[out = 0, in = 180] (1h2);

\draw[very thick] (0* \wid,-0.25*\hei) -- (0* \wid,3.25*\hei);
\end{tikzpicture}
,
\begin{tikzpicture}[x=1.5cm,y=-.5cm,baseline=-1.05cm]

\def\wid{\standardWidth}
\def\hei{\standardHeight}
\def\nodesize{3}
\def\ang{90}

\node[] (0h1) at (-0.5* \wid,1*\hei) {};
\node[] (0h2) at (-0.5* \wid,2*\hei) {};
\node[v, minimum size=\nodesize] (11) at (0* \wid,0*\hei) {};
\node[v, minimum size=\nodesize] (12) at (0* \wid,1*\hei) {};
\node[v, minimum size=\nodesize] (13) at (0* \wid,2*\hei) {};
\node[v, minimum size=\nodesize] (14) at (0* \wid,3*\hei) {};
\node[] (1h1) at (0.5* \wid,1*\hei) {};
\node[] (1h2) at (0.5* \wid,2*\hei) {};

\draw[e] (14) to[in =180, out =180] (11);
\draw[e] (12) to[in =180, out =180] (13);

\draw[e] (12) to[in =0, out =0] (11);
\draw[e] (14) to[out = 0, in = 180] (1h2);
\draw[e] (13) to[out = 0, in = 180] (1h1);

\draw[very thick] (0* \wid,-0.25*\hei) -- (0* \wid,3.25*\hei);
\end{tikzpicture}
\end{center}

No connections on the right, two on the left (the reflections of the row above):

\begin{center}
    \begin{tikzpicture}[x=1.5cm,y=-.5cm,baseline=-1.05cm]

\def\wid{\standardWidth}
\def\hei{\standardHeight}
\def\nodesize{3}
\def\ang{90}

\node[] (0h1) at (-0.5* \wid,1*\hei) {};
\node[] (0h2) at (-0.5* \wid,2*\hei) {};
\node[v, minimum size=\nodesize] (11) at (0* \wid,0*\hei) {};
\node[v, minimum size=\nodesize] (12) at (0* \wid,1*\hei) {};
\node[v, minimum size=\nodesize] (13) at (0* \wid,2*\hei) {};
\node[v, minimum size=\nodesize] (14) at (0* \wid,3*\hei) {};
\node[] (1h1) at (0.5* \wid,1*\hei) {};
\node[] (1h2) at (0.5* \wid,2*\hei) {};

\draw[e] (11) to[in =0, out =0] (12);
\draw[e] (13) to[in =0, out =0] (14);

\draw[e] (12) to[in =180, out =180] (13);
\draw[e] (11) to[out = 180, in = 0] (0h1);
\draw[e] (14) to[out = 180, in = 0] (0h2);

\draw[very thick] (0* \wid,-0.25*\hei) -- (0* \wid,3.25*\hei);
\end{tikzpicture}
,
\begin{tikzpicture}[x=1.5cm,y=-.5cm,baseline=-1.05cm]

\def\wid{\standardWidth}
\def\hei{\standardHeight}
\def\nodesize{3}
\def\ang{90}

\node[] (0h1) at (-0.5* \wid,1*\hei) {};
\node[] (0h2) at (-0.5* \wid,2*\hei) {};
\node[v, minimum size=\nodesize] (11) at (0* \wid,0*\hei) {};
\node[v, minimum size=\nodesize] (12) at (0* \wid,1*\hei) {};
\node[v, minimum size=\nodesize] (13) at (0* \wid,2*\hei) {};
\node[v, minimum size=\nodesize] (14) at (0* \wid,3*\hei) {};
\node[] (1h1) at (0.5* \wid,1*\hei) {};
\node[] (1h2) at (0.5* \wid,2*\hei) {};

\draw[e] (11) to[in =0, out =0] (14);
\draw[e] (13) to[in =0, out =0] (12);

\draw[e] (13) to[in =180, out =180] (14);
\draw[e] (11) to[out = 180, in = 0] (0h1);
\draw[e] (12) to[out = 180, in = 0] (0h2);

\draw[very thick] (0* \wid,-0.25*\hei) -- (0* \wid,3.25*\hei);
\end{tikzpicture}
,
\begin{tikzpicture}[x=1.5cm,y=-.5cm,baseline=-1.05cm]

\def\wid{\standardWidth}
\def\hei{\standardHeight}
\def\nodesize{3}
\def\ang{90}

\node[] (0h1) at (-0.5* \wid,1*\hei) {};
\node[] (0h2) at (-0.5* \wid,2*\hei) {};
\node[v, minimum size=\nodesize] (11) at (0* \wid,0*\hei) {};
\node[v, minimum size=\nodesize] (12) at (0* \wid,1*\hei) {};
\node[v, minimum size=\nodesize] (13) at (0* \wid,2*\hei) {};
\node[v, minimum size=\nodesize] (14) at (0* \wid,3*\hei) {};
\node[] (1h1) at (0.5* \wid,1*\hei) {};
\node[] (1h2) at (0.5* \wid,2*\hei) {};

\draw[e] (14) to[in =0, out =0] (11);
\draw[e] (12) to[in =0, out =0] (13);

\draw[e] (12) to[in =180, out =180] (11);
\draw[e] (14) to[out = 180, in = 0] (0h2);
\draw[e] (13) to[out = 180, in = 0] (0h1);

\draw[very thick] (0* \wid,-0.25*\hei) -- (0* \wid,3.25*\hei);
\end{tikzpicture}
\end{center}

No connections each side (these form a basis for degree 0, and cannot appear in other degrees)

\begin{center}
\begin{tikzpicture}[x=1.5cm,y=-.5cm,baseline=-1.05cm]
\def\wid{\standardWidth}
\def\hei{\standardHeight}
\def\nodesize{3}
\def\ang{90}

\node[v, minimum size=\nodesize] (11) at (0* \wid,0*\hei) {};
\node[v, minimum size=\nodesize] (12) at (0* \wid,1*\hei) {};
\node[v, minimum size=\nodesize] (13) at (0* \wid,2*\hei) {};
\node[v, minimum size=\nodesize] (14) at (0* \wid,3*\hei) {};

\draw[e] (11) to[in =180, out =180] (12);
\draw[e] (13) to[in =180, out =180] (14);

\draw[e] (12) to[in =0, out =0] (13);
\draw[e] (11) to[out = 0, in = 0] (14);

\draw[very thick] (0* \wid,-0.25*\hei) -- (0* \wid,3.25*\hei);
\end{tikzpicture}
,
\quad
\begin{tikzpicture}[x=1.5cm,y=-.5cm,baseline=-1.05cm]
\def\wid{\standardWidth}
\def\hei{\standardHeight}
\def\nodesize{3}
\def\ang{90}

\node[v, minimum size=\nodesize] (11) at (0* \wid,0*\hei) {};
\node[v, minimum size=\nodesize] (12) at (0* \wid,1*\hei) {};
\node[v, minimum size=\nodesize] (13) at (0* \wid,2*\hei) {};
\node[v, minimum size=\nodesize] (14) at (0* \wid,3*\hei) {};

\draw[e] (11) to[in =0, out =0] (12);
\draw[e] (13) to[in =0, out =0] (14);

\draw[e] (12) to[in =180, out =180] (13);
\draw[e] (11) to[out = 180, in = 180] (14);

\draw[very thick] (0* \wid,-0.25*\hei) -- (0* \wid,3.25*\hei);
\end{tikzpicture}
\end{center}
The basis consists of any word in these letters satisfying the following rules:
\begin{itemize}
    \item the first letter of a word of length at least 2 must have no connections on the left, and two on the right,
    \item and `dually', the last letter of a word of length at least 2 must have two connections on the left, and none on the right.
    \item other letters of a word of length at least 2 must have two connections on each side, and
    \item words of length 1 consists of a letter with no connections on either side.
\end{itemize}

\end{lemma}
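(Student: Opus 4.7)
The plan is to leverage \cref{lem:alphabet}, which already expresses every graffito uniquely as a word in the full letter alphabet, and then impose the extra conditions of ``no dividers'' and ``one loop'' to carve out the appropriate subset of letters and words.

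Translating ``no dividers'' is direct: the number of left-right connections in each interior $\beta_i \in \overline{\TL}_4 = I_{\leq 3}$ must be nonzero, even, and at most three, hence equal to two. This forces the stated structural rules on the word (types $(0,2), (2,2), (2,0), (0,0)$ at the first, middle, last, and length-one positions respectively). For the ``one loop'' condition, the key observation is that the number of loops of a graffito equals the number of connected components of its $2$-regular arc graph, which decomposes as the sum of internal arc cycles within each letter plus the number of cycles obtained by chaining together the open paths of the letters through the left-right arcs at interfaces. A case check against \cref{lem:alphabet} then identifies two obstructions that each force $\#(x) \geq 2$ in any graffito containing the offending letter. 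The first is an internal closed arc cycle, which occurs precisely when a letter has a common arc on both sides. The second is specific to $(2,2)$-letters with $L \cap R = \emptyset$: although such a letter has no internal cycle, its two internal paths both connect same-side hangings, so the arc graph necessarily splits across that letter's bar into a subgraph entirely to the left and one entirely to the right, each containing at least one cycle. The letters passing both tests are exactly those listed in the statement --- a good $(2,2)$-letter is forced to have $|L \cap R| = 1$, equivalently one of its arcs is $(2,3)$ and the other is $(1,2)$ or $(3,4)$.

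The converse --- that any word of allowed letters satisfying the structural rules actually represents a one-loop graffito --- is what I expect to be the main obstacle. For length-one words, direct inspection shows the two allowed $(0,0)$-letters each give a single $4$-cycle. For longer words, each good $(2,2)$-letter admits an explicit pairing description: a passthrough at the unique node $m$ absent from both $a_{i-1}$ and $a_i$, together with a chain pair joining the two remaining non-arc positions. Using this, I would trace the global cycle by following the nontrivial pairings forward through the word from the first letter, turning around at the last, and returning via the complementary pairings, then verify combinatorially that the resulting $(4p-4)$-step trail closes up as a single cycle rather than splitting. The cleanest route is likely an induction on $p$ in which the last two letters are removed and replaced by the unique $(2,0)$-letter whose internal pairing records their composed pairings, provided one checks at each stage that this composition remains in the allowed alphabet.
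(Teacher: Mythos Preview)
Your proposal is correct and is essentially the same approach as the paper's: the paper's entire proof is the single phrase ``Case-by-case,'' and what you have written is a careful, systematised execution of exactly that case analysis, organised around the two combinatorial obstructions (an internal closed arc, and a $(2,2)$-letter with $L\cap R=\emptyset$ whose two internal paths both return to the same side).

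One small simplification for the converse, which you flag as the main worry: you do not need the induction that replaces two letters by a composed $(2,0)$-letter and then checks the composite stays in the allowed alphabet. It is cleaner to observe directly that each good $(2,2)$-letter (those with $|L\cap R|=1$) has its two internal open paths running from a left hanging to a right hanging, while each good $(0,2)$- or $(2,0)$-letter is a single open path between its two hangings. Gluing at an interface identifies the two right hangings of one piece with the two left hangings of the next, so an easy induction on the number of letters shows that the glued object is always a single open path between the two outermost hangings; closing with the final $(2,0)$-letter then yields a single cycle. This avoids any need to verify that intermediate composites land in the allowed alphabet.
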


\begin{proof} Case-by-case.
\end{proof}

\begin{remark} This complex fails to be like the one from \cref{lem:basicObs} in two ways. Firstly, there are restrictions on which letter can go where. Secondly, the differential is not quite given by letter deletion, as follows. Deleting a bar other than the first or last \emph{does} have the effect of deleting the letter living on that bar:
\begin{center}
$d_1($
\begin{tikzpicture}[x=1.5cm,y=-.5cm,baseline=-0.7cm]
\def\wid{\standardWidth}
\def\hei{\standardHeight}
\def\nodesize{3}
\def\ang{90}

\node[v, minimum size=\nodesize] (11) at (0* \wid,0*\hei) {};
\node[v, minimum size=\nodesize] (12) at (0* \wid,1*\hei) {};
\node[v, minimum size=\nodesize] (13) at (0* \wid,2*\hei) {};
\node[v, minimum size=\nodesize] (14) at (0* \wid,3*\hei) {};

\node[] (1h1) at (0.5* \wid,1*\hei) {};
\node[] (1h2) at (0.5* \wid,2*\hei) {};

\draw[e] (11) to[in =180, out =180] (14);
\draw[e] (13) to[in =180, out =180] (12);

\node[v, minimum size=\nodesize] (21) at (1* \wid,0*\hei) {};
\node[v, minimum size=\nodesize] (22) at (1* \wid,1*\hei) {};
\node[v, minimum size=\nodesize] (23) at (1* \wid,2*\hei) {};
\node[v, minimum size=\nodesize] (24) at (1* \wid,3*\hei) {};

\node[] (2h1) at (1.5* \wid,1*\hei) {};
\node[] (2h2) at (1.5* \wid,2*\hei) {};

\draw[e] (11) to[in =0, out =0] (12);
\draw[e] (13) to[out = 0, in = 180] (1h1);
\draw[e] (1h1) to[out = 0, in = 180] (23);
\draw[e] (14) to[out = 0, in = 180] (1h2);
\draw[e] (1h2) to[out = 0, in = 180] (24);
\draw[e] (21) to[out = 180, in = 180] (22);

\node[v, minimum size=\nodesize] (31) at (2* \wid,0*\hei) {};
\node[v, minimum size=\nodesize] (32) at (2* \wid,1*\hei) {};
\node[v, minimum size=\nodesize] (33) at (2* \wid,2*\hei) {};
\node[v, minimum size=\nodesize] (34) at (2* \wid,3*\hei) {};

\draw[e] (23) to[in =0, out =0] (22);
\draw[e] (21) to[out = 0, in = 180] (2h1);
\draw[e] (2h1) to[out = 0, in = 180] (31);
\draw[e] (24) to[out = 0, in = 180] (2h2);
\draw[e] (2h2) to[out = 0, in = 180] (32);
\draw[e] (33) to[out = 180, in = 180] (34);

\draw[e] (31) to[in =0, out =0] (34);
\draw[e] (32) to[in =0, out =0] (33);

\draw[very thick] (0* \wid,-0.25*\hei) -- (0* \wid,3.25*\hei);
\draw[very thick] (1* \wid,-0.25*\hei) -- (1* \wid,3.25*\hei);
\draw[very thick] (2* \wid,-0.25*\hei) -- (2* \wid,3.25*\hei);
\end{tikzpicture}
$)$
\quad
$=$
\quad
\begin{tikzpicture}[x=1.5cm,y=-.5cm,baseline=-0.7cm]
\def\wid{\standardWidth}
\def\hei{\standardHeight}
\def\nodesize{3}
\def\ang{90}

\node[v, minimum size=\nodesize] (11) at (0* \wid,0*\hei) {};
\node[v, minimum size=\nodesize] (12) at (0* \wid,1*\hei) {};
\node[v, minimum size=\nodesize] (13) at (0* \wid,2*\hei) {};
\node[v, minimum size=\nodesize] (14) at (0* \wid,3*\hei) {};

\node[] (1h1) at (0.5* \wid,1*\hei) {};
\node[] (1h2) at (0.5* \wid,2*\hei) {};

\draw[e] (11) to[in =180, out =180] (14);
\draw[e] (13) to[in =180, out =180] (12);

\draw[e] (11) to[in =0, out =0] (12);
\draw[e] (13) to[out = 0, in = 180] (1h1);
\draw[e] (14) to[out = 0, in = 180] (1h2);

\node[v, minimum size=\nodesize] (31) at (1* \wid,0*\hei) {};
\node[v, minimum size=\nodesize] (32) at (1* \wid,1*\hei) {};
\node[v, minimum size=\nodesize] (33) at (1* \wid,2*\hei) {};
\node[v, minimum size=\nodesize] (34) at (1* \wid,3*\hei) {};

\draw[e] (1h1) to[out = 0, in = 180] (31);
\draw[e] (1h2) to[out = 0, in = 180] (32);
\draw[e] (33) to[out = 180, in = 180] (34);

\draw[e] (31) to[in =0, out =0] (34);
\draw[e] (32) to[in =0, out =0] (33);

\draw[very thick] (0* \wid,-0.25*\hei) -- (0* \wid,3.25*\hei);
\draw[very thick] (1* \wid,-0.25*\hei) -- (1* \wid,3.25*\hei);
\end{tikzpicture}
,
\end{center}
but deleting an endmost bar changes the neighbouring letter, by joining up the two hanging edges:
\begin{center}
$d_0($
\begin{tikzpicture}[x=1.5cm,y=-.5cm,baseline=-0.7cm]
\def\wid{\standardWidth}
\def\hei{\standardHeight}
\def\nodesize{3}
\def\ang{90}

\node[v, minimum size=\nodesize] (11) at (0* \wid,0*\hei) {};
\node[v, minimum size=\nodesize] (12) at (0* \wid,1*\hei) {};
\node[v, minimum size=\nodesize] (13) at (0* \wid,2*\hei) {};
\node[v, minimum size=\nodesize] (14) at (0* \wid,3*\hei) {};

\node[] (1h1) at (0.5* \wid,1*\hei) {};
\node[] (1h2) at (0.5* \wid,2*\hei) {};

\draw[e] (11) to[in =180, out =180] (14);
\draw[e] (13) to[in =180, out =180] (12);

\node[v, minimum size=\nodesize] (21) at (1* \wid,0*\hei) {};
\node[v, minimum size=\nodesize] (22) at (1* \wid,1*\hei) {};
\node[v, minimum size=\nodesize] (23) at (1* \wid,2*\hei) {};
\node[v, minimum size=\nodesize] (24) at (1* \wid,3*\hei) {};

\node[] (2h1) at (1.5* \wid,1*\hei) {};
\node[] (2h2) at (1.5* \wid,2*\hei) {};

\draw[e] (11) to[in =0, out =0] (12);
\draw[e] (13) to[out = 0, in = 180] (1h1);
\draw[e] (1h1) to[out = 0, in = 180] (23);
\draw[e] (14) to[out = 0, in = 180] (1h2);
\draw[e] (1h2) to[out = 0, in = 180] (24);
\draw[e] (21) to[out = 180, in = 180] (22);

\node[v, minimum size=\nodesize] (31) at (2* \wid,0*\hei) {};
\node[v, minimum size=\nodesize] (32) at (2* \wid,1*\hei) {};
\node[v, minimum size=\nodesize] (33) at (2* \wid,2*\hei) {};
\node[v, minimum size=\nodesize] (34) at (2* \wid,3*\hei) {};

\draw[e] (23) to[in =0, out =0] (22);
\draw[e] (21) to[out = 0, in = 180] (2h1);
\draw[e] (2h1) to[out = 0, in = 180] (31);
\draw[e] (24) to[out = 0, in = 180] (2h2);
\draw[e] (2h2) to[out = 0, in = 180] (32);
\draw[e] (33) to[out = 180, in = 180] (34);

\draw[e] (31) to[in =0, out =0] (34);
\draw[e] (32) to[in =0, out =0] (33);

\draw[very thick] (0* \wid,-0.25*\hei) -- (0* \wid,3.25*\hei);
\draw[very thick] (1* \wid,-0.25*\hei) -- (1* \wid,3.25*\hei);
\draw[very thick] (2* \wid,-0.25*\hei) -- (2* \wid,3.25*\hei);
\end{tikzpicture}
$)$
\quad
$=$
\quad
\begin{tikzpicture}[x=1.5cm,y=-.5cm,baseline=-0.7cm]
\def\wid{\standardWidth}
\def\hei{\standardHeight}
\def\nodesize{3}
\def\ang{90}

\node[v, minimum size=\nodesize] (11) at (0* \wid,0*\hei) {};
\node[v, minimum size=\nodesize] (12) at (0* \wid,1*\hei) {};
\node[v, minimum size=\nodesize] (13) at (0* \wid,2*\hei) {};
\node[v, minimum size=\nodesize] (14) at (0* \wid,3*\hei) {};

\node[] (1h1) at (0.5* \wid,1*\hei) {};
\node[] (1h2) at (0.5* \wid,2*\hei) {};

\draw[e] (11) to[in =180, out =180] (12);
\draw[e] (13) to[in =180, out =180] (14);

\draw[e] (12) to[in =0, out =0] (13);
\draw[e] (11) to[out = 0, in = 180] (1h1);
\draw[e] (14) to[out = 0, in = 180] (1h2);

\node[v, minimum size=\nodesize] (31) at (1* \wid,0*\hei) {};
\node[v, minimum size=\nodesize] (32) at (1* \wid,1*\hei) {};
\node[v, minimum size=\nodesize] (33) at (1* \wid,2*\hei) {};
\node[v, minimum size=\nodesize] (34) at (1* \wid,3*\hei) {};

\draw[e] (1h1) to[out = 0, in = 180] (31);
\draw[e] (1h2) to[out = 0, in = 180] (32);
\draw[e] (33) to[out = 180, in = 180] (34);

\draw[e] (31) to[in =0, out =0] (34);
\draw[e] (32) to[in =0, out =0] (33);

\draw[very thick] (0* \wid,-0.25*\hei) -- (0* \wid,3.25*\hei);
\draw[very thick] (1* \wid,-0.25*\hei) -- (1* \wid,3.25*\hei);
\end{tikzpicture}
.
\end{center}
\end{remark}

On the other hand, for the complex $C_*^{LR}[1,0]$ of two-sided open graffiti which close up to graffiti with no loops and no dividers (\cref{def:complexCij}), we have the following, which again follows from \cref{lem:alphabet} by checking case-by-case.

\begin{lemma} \label{lem:openEnds} The complex $C_*^{LR}[1,0]$ is isomorphic to the complex of words in the four letters 
\begin{center}
\begin{tikzpicture}[x=1.5cm,y=-.5cm,baseline=-1.05cm]

\def\wid{\standardWidth}
\def\hei{\standardHeight}
\def\nodesize{3}
\def\ang{90}

\node[] (0h1) at (-0.5* \wid,1*\hei) {};
\node[] (0h2) at (-0.5* \wid,2*\hei) {};
\node[v, minimum size=\nodesize] (11) at (0* \wid,0*\hei) {};
\node[v, minimum size=\nodesize] (12) at (0* \wid,1*\hei) {};
\node[v, minimum size=\nodesize] (13) at (0* \wid,2*\hei) {};
\node[v, minimum size=\nodesize] (14) at (0* \wid,3*\hei) {};
\node[] (1h1) at (0.5* \wid,1*\hei) {};
\node[] (1h2) at (0.5* \wid,2*\hei) {};

\draw[e] (11) to[in =0, out =180] (0h1);
\draw[e] (12) to[in =0, out =180] (0h2);
\draw[e] (13) to[in =180, out =180] (14);

\draw[e] (12) to[in =0, out =0] (13);
\draw[e] (11) to[out = 0, in = 180] (1h1);
\draw[e] (14) to[out = 0, in = 180] (1h2);

\draw[very thick] (0* \wid,-0.25*\hei) -- (0* \wid,3.25*\hei);
\end{tikzpicture}
,
\begin{tikzpicture}[x=1.5cm,y=-.5cm,baseline=-1.05cm]

\def\wid{\standardWidth}
\def\hei{\standardHeight}
\def\nodesize{3}
\def\ang{90}

\node[] (0h1) at (-0.5* \wid,1*\hei) {};
\node[] (0h2) at (-0.5* \wid,2*\hei) {};
\node[v, minimum size=\nodesize] (11) at (0* \wid,0*\hei) {};
\node[v, minimum size=\nodesize] (12) at (0* \wid,1*\hei) {};
\node[v, minimum size=\nodesize] (13) at (0* \wid,2*\hei) {};
\node[v, minimum size=\nodesize] (14) at (0* \wid,3*\hei) {};
\node[] (1h1) at (0.5* \wid,1*\hei) {};
\node[] (1h2) at (0.5* \wid,2*\hei) {};

\draw[e] (13) to[in =00, out =180] (0h1);
\draw[e] (14) to[in =0, out =180] (0h2);
\draw[e] (12) to[in =180, out =180] (11);

\draw[e] (12) to[in =0, out =0] (13);
\draw[e] (11) to[out = 0, in = 180] (1h1);
\draw[e] (14) to[out = 0, in = 180] (1h2);

\draw[very thick] (0* \wid,-0.25*\hei) -- (0* \wid,3.25*\hei);
\end{tikzpicture}
,
\begin{tikzpicture}[x=1.5cm,y=-.5cm,baseline=-1.05cm]

\def\wid{\standardWidth}
\def\hei{\standardHeight}
\def\nodesize{3}
\def\ang{90}

\node[] (0h1) at (-0.5* \wid,1*\hei) {};
\node[] (0h2) at (-0.5* \wid,2*\hei) {};
\node[v, minimum size=\nodesize] (11) at (0* \wid,0*\hei) {};
\node[v, minimum size=\nodesize] (12) at (0* \wid,1*\hei) {};
\node[v, minimum size=\nodesize] (13) at (0* \wid,2*\hei) {};
\node[v, minimum size=\nodesize] (14) at (0* \wid,3*\hei) {};
\node[] (1h1) at (0.5* \wid,1*\hei) {};
\node[] (1h2) at (0.5* \wid,2*\hei) {};

\draw[e] (11) to[in =0, out =180] (0h1);
\draw[e] (14) to[in =0, out =180] (0h2);
\draw[e] (13) to[in =180, out =180] (12);

\draw[e] (13) to[in =0, out =0] (14);
\draw[e] (11) to[out = 0, in = 180] (1h1);
\draw[e] (12) to[out = 0, in = 180] (1h2);

\draw[very thick] (0* \wid,-0.25*\hei) -- (0* \wid,3.25*\hei);
\end{tikzpicture}
,
\begin{tikzpicture}[x=1.5cm,y=-.5cm,baseline=-1.05cm]

\def\wid{\standardWidth}
\def\hei{\standardHeight}
\def\nodesize{3}
\def\ang{90}

\node[] (0h1) at (-0.5* \wid,1*\hei) {};
\node[] (0h2) at (-0.5* \wid,2*\hei) {};
\node[v, minimum size=\nodesize] (11) at (0* \wid,0*\hei) {};
\node[v, minimum size=\nodesize] (12) at (0* \wid,1*\hei) {};
\node[v, minimum size=\nodesize] (13) at (0* \wid,2*\hei) {};
\node[v, minimum size=\nodesize] (14) at (0* \wid,3*\hei) {};
\node[] (1h1) at (0.5* \wid,1*\hei) {};
\node[] (1h2) at (0.5* \wid,2*\hei) {};

\draw[e] (11) to[in =0, out =180] (0h1);
\draw[e] (14) to[in =0, out =180] (0h2);
\draw[e] (12) to[in =180, out =180] (13);

\draw[e] (12) to[in =0, out =0] (11);
\draw[e] (14) to[out = 0, in = 180] (1h2);
\draw[e] (13) to[out = 0, in = 180] (1h1);

\draw[very thick] (0* \wid,-0.25*\hei) -- (0* \wid,3.25*\hei);
\end{tikzpicture}
,
\end{center}
so by \cref{lem:basicObs} its homology is generated by any single letter. \qed
\end{lemma}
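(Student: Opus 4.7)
My plan is to decompose graffiti into words in letters (via \cref{lem:alphabet}), identify which letters can appear, and recognise the differential as letter deletion.

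First, I will use the change of perspective of \cref{subsection: ChangeOfPerspective} to write each basis element of $C_*^{LR}[1,0]$ uniquely as a word in letters. The twosided-open condition forces the outer connections of the first and last letters to carry 2 hanging edges, and the zero-dividers condition forces every interior $\beta_i$ to have exactly 2 left-to-right connections (otherwise $\beta_i \in I_0$, creating a divider). Consequently every letter in the word has 2 hanging connections on each side, restricting the alphabet to the 9 letters in the ``two connections each side'' row of \cref{lem:alphabet}.

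The main obstacle is the second step: ruling out 5 of these 9 letters. Each letter realises a planar matching of its 4 external endpoints, either the \emph{straight} matching (pairing top-left with top-right and bottom-left with bottom-right) or the \emph{U-turn} matching (pairing top-left with bottom-left and top-right with bottom-right), possibly together with a closed loop lying entirely on the bar. Case-by-case inspection of the 9 letters shows that 4 realise the straight matching without any internal loop (these are the letters listed), 3 realise the straight matching plus an internal loop, and 2 realise the U-turn matching. Any letter with an internal loop contributes directly to the total loop count, while any U-turn letter breaks the path-connectivity and forces the endmost closures to produce multiple loops rather than one; in either case the closure yields more than 1 loop, so no such letter can appear. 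Conversely, concatenating any sequence of the 4 listed letters yields exactly two straight paths (one at the top, one at the bottom), and closing up the two ends joins these into a single loop, as required.

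Finally, I will check that each face map $d_i$ acts as deletion of the $i$-th letter. For each of the 4 allowed letters the top and bottom paths pass directly through the corresponding bar, so performing $d_i$ -- composing $\beta_{i-1}$ and $\beta_i$ and flattening the removed bar -- merges the neighbouring letters' top paths (and bottom paths) into one, without creating any loop (which would give a factor of $a=0$) or any divider (which would push us out of the $0$-divider subquotient). Endmost deletions behave the same way, with the outer hangings of the removed letter becoming the outer hangings of the new end letter. This identifies $C_*^{LR}[1,0]$ with the complex of words on the 4 listed letters, and \cref{lem:basicObs} gives the claim.
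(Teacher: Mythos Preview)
Your argument is correct and is precisely the case-by-case verification the paper alludes to (the paper's own proof is just the phrase ``follows from \cref{lem:alphabet} by checking case-by-case''), spelled out in full detail. Your classification of the nine two-sided letters into four straight, three straight-plus-loop, and two U-turn types is accurate, and your check that the face maps act as pure letter deletion (including at the endmost bars, where the hanging ends are simply inherited by the neighbouring letter) is exactly what is needed.
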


This information about the two-sided open complex will allow us to understand the closed complex $C_*[1,0]$, using the short exact sequences $$0 \to K \to S(4,2) \to S(4,0) \to 0 \textrm{ and } 0 \to K^\vee \to S^\vee(2,4) \to S^\vee(0,4) \to 0$$ of \cref{rmk:SES}. Some preparation is necessary.

\begin{lemma} \label{lem:SESKernelID} As an $R$-module, the kernel $K^\vee$ is free of rank 1 on generator
\begin{center}
\begin{tikzpicture}[x=1.5cm,y=-.5cm,baseline=-0.7cm]

\def\wid{\standardWidth}
\def\hei{\standardHeight}
\def\nodesize{3}
\def\ang{90}

\node[] (0h1) at (-0.5* \wid,1*\hei) {};
\node[] (0h2) at (-0.5* \wid,2*\hei) {};
\node[v, minimum size=\nodesize] (11) at (0* \wid,0*\hei) {};
\node[v, minimum size=\nodesize] (12) at (0* \wid,1*\hei) {};
\node[v, minimum size=\nodesize] (13) at (0* \wid,2*\hei) {};
\node[v, minimum size=\nodesize] (14) at (0* \wid,3*\hei) {};

\draw[e] (11) to[in =0, out =180] (0h1);
\draw[e] (12) to[in =0, out =180] (0h2);
\draw[e] (13) to[in =180, out =180] (14);

\draw[very thick] (0* \wid,-0.25*\hei) -- (0* \wid,3.25*\hei);
\end{tikzpicture}
\quad
$-$
\begin{tikzpicture}[x=1.5cm,y=-.5cm,baseline=-0.7cm]

\def\wid{\standardWidth}
\def\hei{\standardHeight}
\def\nodesize{3}
\def\ang{90}

\node[] (0h1) at (-0.5* \wid,1*\hei) {};
\node[] (0h2) at (-0.5* \wid,2*\hei) {};
\node[v, minimum size=\nodesize] (11) at (0* \wid,0*\hei) {};
\node[v, minimum size=\nodesize] (12) at (0* \wid,1*\hei) {};
\node[v, minimum size=\nodesize] (13) at (0* \wid,2*\hei) {};
\node[v, minimum size=\nodesize] (14) at (0* \wid,3*\hei) {};

\draw[e] (13) to[in =00, out =180] (0h1);
\draw[e] (14) to[in =0, out =180] (0h2);
\draw[e] (12) to[in =180, out =180] (11);

\draw[very thick] (0* \wid,-0.25*\hei) -- (0* \wid,3.25*\hei);
\end{tikzpicture}
$\in S^\vee(2,4)$.
\end{center}
\end{lemma}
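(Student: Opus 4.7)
The strategy is to enumerate an $R$-basis of $S^\vee(2,4)$ explicitly, compute the closure map $S^\vee(2,4) \to S^\vee(0,4)$ on each basis element, and read off the kernel from the resulting matrix. A basis of $S^\vee(2,4)$ consists of $(2,4)$-diagrams with no left-to-left connection, so both left vertices must connect to the right, and the two unused right vertices must then form a planar right-to-right arc. Labelling the right vertices $R_1,\ldots,R_4$ from top to bottom, a short planarity check shows that there are exactly three such diagrams: the two pictured in the statement (with the right arc at $R_3R_4$ and $R_1R_2$ respectively, the left vertices connecting to the other two right vertices in the unique planar way), together with the third diagram in which $R_2R_3$ is the right arc and the left vertices connect to $R_1$ and $R_4$. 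This matches the expected rank count $\dim S^\vee(2,4) = \binom{4}{1}-\binom{4}{0} = 3$.

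Next, I would compute the closure map diagram by diagram, joining the two hanging left edges by an outer arc. The first two basis diagrams both close up to the non-nested $(0,4)$-pairing $\{R_1R_2,\,R_3R_4\}$, while the third closes up to the nested pairing $\{R_1R_4,\,R_2R_3\}$. Relative to these bases, the closure map has matrix
$$
\begin{pmatrix} 1 & 1 & 0 \\ 0 & 0 & 1 \end{pmatrix},
$$
and this matrix visibly has $R$-kernel free of rank one, spanned by the vector $(1,-1,0)$, which is exactly the element displayed in the statement. Note that $\dim S^\vee(0,4) = C_2 = 2$, so the rank of the kernel is $3-2 = 1$ as expected.

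I do not anticipate any real obstacle here. The only care needed is in the initial enumeration: one must verify that planarity really leaves only these three basis diagrams in $S^\vee(2,4)$, which is a small finite check. Once the basis is in hand, both the matrix of the closure map and the identification of its kernel are immediate.
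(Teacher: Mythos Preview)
Your proposal is correct and follows essentially the same approach as the paper: both enumerate the three basis diagrams of $S^\vee(2,4)$, compute that closing up sends the first two to the non-nested pairing and the third to the nested one, and read off the kernel. Your version is slightly more explicit (writing the matrix and including the dimension counts), but the argument is identical.
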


\begin{proof} The right $\TL_4$-module $S^\vee(2,4)$ is as an $R$-module free on basis
\begin{center}
\begin{tikzpicture}[x=1.5cm,y=-.5cm,baseline=-0.7cm]

\def\wid{\standardWidth}
\def\hei{\standardHeight}
\def\nodesize{3}
\def\ang{90}

\node[] (0h1) at (-0.5* \wid,1*\hei) {};
\node[] (0h2) at (-0.5* \wid,2*\hei) {};
\node[v, minimum size=\nodesize] (11) at (0* \wid,0*\hei) {};
\node[v, minimum size=\nodesize] (12) at (0* \wid,1*\hei) {};
\node[v, minimum size=\nodesize] (13) at (0* \wid,2*\hei) {};
\node[v, minimum size=\nodesize] (14) at (0* \wid,3*\hei) {};

\draw[e] (11) to[in =0, out =180] (0h1);
\draw[e] (12) to[in =0, out =180] (0h2);
\draw[e] (13) to[in =180, out =180] (14);

\draw[very thick] (0* \wid,-0.25*\hei) -- (0* \wid,3.25*\hei);
\end{tikzpicture}
\quad
, \quad 
\begin{tikzpicture}[x=1.5cm,y=-.5cm,baseline=-0.7cm]

\def\wid{\standardWidth}
\def\hei{\standardHeight}
\def\nodesize{3}
\def\ang{90}

\node[] (0h1) at (-0.5* \wid,1*\hei) {};
\node[] (0h2) at (-0.5* \wid,2*\hei) {};
\node[v, minimum size=\nodesize] (11) at (0* \wid,0*\hei) {};
\node[v, minimum size=\nodesize] (12) at (0* \wid,1*\hei) {};
\node[v, minimum size=\nodesize] (13) at (0* \wid,2*\hei) {};
\node[v, minimum size=\nodesize] (14) at (0* \wid,3*\hei) {};

\draw[e] (13) to[in =00, out =180] (0h1);
\draw[e] (14) to[in =0, out =180] (0h2);
\draw[e] (12) to[in =180, out =180] (11);

\draw[very thick] (0* \wid,-0.25*\hei) -- (0* \wid,3.25*\hei);
\end{tikzpicture}
\quad
, \textrm{ and}
\quad
\begin{tikzpicture}[x=1.5cm,y=-.5cm,baseline=-0.7cm]

\def\wid{\standardWidth}
\def\hei{\standardHeight}
\def\nodesize{3}
\def\ang{90}

\node[] (0h1) at (-0.5* \wid,1*\hei) {};
\node[] (0h2) at (-0.5* \wid,2*\hei) {};
\node[v, minimum size=\nodesize] (11) at (0* \wid,0*\hei) {};
\node[v, minimum size=\nodesize] (12) at (0* \wid,1*\hei) {};
\node[v, minimum size=\nodesize] (13) at (0* \wid,2*\hei) {};
\node[v, minimum size=\nodesize] (14) at (0* \wid,3*\hei) {};

\draw[e] (11) to[in =00, out =180] (0h1);
\draw[e] (14) to[in =0, out =180] (0h2);
\draw[e] (12) to[in =180, out =180] (13);

\draw[very thick] (0* \wid,-0.25*\hei) -- (0* \wid,3.25*\hei);
\end{tikzpicture}
.
\end{center}
Closing up ends $S^{\vee}(2,4) \to S^{\vee}(0,4)$ sends the first two basis elements to 
\begin{center}
\begin{tikzpicture}[x=1.5cm,y=-.5cm,baseline=-0.7cm]

\def\wid{\standardWidth}
\def\hei{\standardHeight}
\def\nodesize{3}
\def\ang{90}

\node[v, minimum size=\nodesize] (11) at (0* \wid,0*\hei) {};
\node[v, minimum size=\nodesize] (12) at (0* \wid,1*\hei) {};
\node[v, minimum size=\nodesize] (13) at (0* \wid,2*\hei) {};
\node[v, minimum size=\nodesize] (14) at (0* \wid,3*\hei) {};

\draw[e] (11) to[in =180, out =180] (12);
\draw[e] (14) to[in =180, out =180] (13);

\draw[very thick] (0* \wid,-0.25*\hei) -- (0* \wid,3.25*\hei);
\end{tikzpicture}
, and the last one to
\begin{tikzpicture}[x=1.5cm,y=-.5cm,baseline=-0.7cm]

\def\wid{\standardWidth}
\def\hei{\standardHeight}
\def\nodesize{3}
\def\ang{90}

\node[v, minimum size=\nodesize] (11) at (0* \wid,0*\hei) {};
\node[v, minimum size=\nodesize] (12) at (0* \wid,1*\hei) {};
\node[v, minimum size=\nodesize] (13) at (0* \wid,2*\hei) {};
\node[v, minimum size=\nodesize] (14) at (0* \wid,3*\hei) {};

\draw[e] (11) to[in =180, out =180] (14);
\draw[e] (12) to[in =180, out =180] (13);

\draw[very thick] (0* \wid,-0.25*\hei) -- (0* \wid,3.25*\hei);
\end{tikzpicture}
.
\end{center}
The result follows.
\end{proof}

\begin{proposition}[\cref{thm:mainTechnical} (\ref{part:1})] \label{prop:main1} The complex $C_*[1,0]$ with one loop and no dividers is contractible, with homology generated by the (class of the) diagram \begin{center}
\begin{tikzpicture}[x=1.5cm,y=-.5cm,baseline=-1.05cm]
\def\wid{\standardWidth}
\def\hei{\standardHeight}
\def\nodesize{3}
\def\ang{90}

\node[v, minimum size=\nodesize] (11) at (0* \wid,0*\hei) {};
\node[v, minimum size=\nodesize] (12) at (0* \wid,1*\hei) {};
\node[v, minimum size=\nodesize] (13) at (0* \wid,2*\hei) {};
\node[v, minimum size=\nodesize] (14) at (0* \wid,3*\hei) {};

\draw[e] (11) to[in =180, out =180] (12);
\draw[e] (13) to[in =180, out =180] (14);

\draw[e] (12) to[in =0, out =0] (13);
\draw[e] (11) to[out = 0, in = 0] (14);

\draw[very thick] (0* \wid,-0.25*\hei) -- (0* \wid,3.25*\hei);
\end{tikzpicture}
\end{center}
The complexes $C_*^R[1,0]$ and $C_*^L[1,0]$ are also contractible.
\end{proposition}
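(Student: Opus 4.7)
The plan is to promote \cref{lem:openEnds,lem:basicObs}, which together identify $C^{LR}_*[1,0]$ as a complex of words on four letters with $H_* = R$ concentrated in degree one, to the other three complexes. The main tool will be the short exact sequences of \cref{rmk:SES}. Applying each of these to the bar construction termwise and restricting to the (1-loop, 0-divider) subquotient should yield short exact sequences of chain complexes $0 \to E^L \to C^{LR}_*[1,0] \to C^R_*[1,0] \to 0$, $0 \to E^R \to C^{LR}_*[1,0] \to C^L_*[1,0] \to 0$, and $0 \to F^R \to C^R_*[1,0] \to C_*[1,0] \to 0$ (and a symmetric version for $C^L$), where each kernel $E^L, E^R, F^R$ is a bar construction with $K$ or $K^\vee$ in one endmost factor.

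The key structural point is that $K \cong K^\vee \cong \mathbbm{1}$ as $\TL_4$-modules, as noted at the end of \cref{rmk:SES}: since $a=0$, every generator $U_i$ acts by zero on the one-dimensional trivial module, hence so does every element of $\overline{\TL}_4$. Consequently, the endmost face map adjacent to the $\mathbbm{1}$-factor vanishes identically on each kernel complex, so the $\mathbbm{1}$-factor decouples, and I expect each kernel complex to admit an identification, up to a shift of one in homological degree, with a smaller open complex in the same diagram (for instance $E^L \cong C^R_*[1,0][-1]$). Given these identifications, the long exact sequences together with the input $H_*(C^{LR}_*[1,0]) = R$ in degree one will force $H_*(C^R_*[1,0]) = 0 = H_*(C^L_*[1,0])$, and finally the SES involving $F^R$, combined with the now-established contractibility of $C^R_*[1,0]$, will identify $H_*(C_*[1,0])$ as $R$ concentrated in degree one. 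A direct check confirms that the specific graffito $\varphi(x)$ is a cycle (any putative boundary lands in $C_0[1,0] = 0$), and a length-two boundary computation should show it is homologous to the other length-one generator of $C_1[1,0]$, so $[\varphi(x)]$ generates $H_1$.

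The main obstacle will be establishing the precise kernel-to-open-complex identifications (e.g.\ $E^L \cong C^R_*[1,0][-1]$), which requires unpacking how the (1-loop, 0-divider) grading interacts with the trivial $\TL_4$-action on the kernels. An alternative, more direct route would be to construct explicit contracting homotopies on $C^R_*[1,0]$ and $C^L_*[1,0]$ by analogy with \cref{lem:basicObs}, but the inhomogeneity of the alphabet at the open endpoints, where initial, medial, and final letters are of different types, complicates this approach.
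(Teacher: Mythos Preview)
Your overall strategy---feed the short exact sequences of \cref{rmk:SES} into the bar construction, identify the kernel complexes, and climb from $C^{LR}$ to $C^R$ to $C$ via long exact sequences---is exactly the paper's approach. The gap is in the kernel identification and the intermediate conclusion it produces.

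The guess $E^L \cong C^R_*[1,0][-1]$ is not correct, and the deduction $H_*(C^R_*[1,0]) = 0$ is false: in fact $H_*(C^R_*[1,0]) \cong R$ in degree $1$ (this is what ``contractible'' means here, the complexes living in degrees $\geq 1$). Stripping the trivial factor from $\mathbbm{1} \otimes \overline{\TL}_4^{\otimes p-1} \otimes S(4,2)$ leaves $\overline{\TL}_4^{\otimes p-1} \otimes S(4,2)$, which is not $C^R_{p-1} = S^\vee(0,4) \otimes \overline{\TL}_4^{\otimes p-2} \otimes S(4,2)$: the leftmost tensor factor is of the wrong type. What actually happens is visible in the word description of \cref{lem:openEnds}: under \cref{lem:SESKernelID}, $E^L \subset C^{LR}_*[1,0]$ is spanned by differences $Aw - Bw$ of words agreeing except in the first letter, with $A,B$ two fixed letters. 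For $p \geq 2$, sending such a difference to its common tail $w$ identifies $E^L_p \cong C^{LR}_{p-1}[1,0]$ (not $C^R$), and there is in addition a rank-one piece $E^L_1 \cong R$ generated by $A-B$, with the map $E^L_2 \to E^L_1$ playing the role of an augmentation that carries the degree-$1$ generator of $H_*(C^{LR})$ onto it. Thus $H_*(E^L) = 0$, and the long exact sequence gives $H_*(C^R) \cong H_*(C^{LR})$. The second short exact sequence is handled the same way: the kernel $F^R$ is an augmented shift of $C^R$ (not of $C$), its homology vanishes by the step just completed, and one obtains $H_*(C) \cong H_*(C^R) \cong R$ in degree $1$.
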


\begin{proof} Closing up left ends (i.e.~applying $\overline{\mathrm{Bar}}_{*-1}((-),\TL_4,S(4,2))$ to the short exact sequence of \cref{rmk:SES}) gives two short exact sequences of chain complexes
\begin{align*}
    & 0 \to \overline{\mathrm{Bar}}_{*-1}(K^\vee,\TL_4,S(4,2)) \to C_*^{LR}[1,0] \to C_*^{R}[1,0] \to 0, \textrm{ and } \\ 
    & 0 \to \overline{\mathrm{Bar}}_{*-1}(K^\vee,\TL_4,S(4,0)) \to C_*^{L}[1,0] \to C_*[1,0] \to 0.
\end{align*}

Consider the first short exact sequence. By \cref{lem:openEnds,lem:SESKernelID}, the bar construction $\overline{\mathrm{Bar}}_{*-1}(K^\vee,\TL_4,S(4,2))$ is the subcomplex of $C_*^{LR}[1,0]$ spanned by differences of pairs of words differing only in their first letter, one beginning with
\begin{center}
\begin{tikzpicture}[x=1.5cm,y=-.5cm,baseline=-0.7cm]

\def\wid{\standardWidth}
\def\hei{\standardHeight}
\def\nodesize{3}
\def\ang{90}

\node[] (0h1) at (-0.5* \wid,1*\hei) {};
\node[] (0h2) at (-0.5* \wid,2*\hei) {};
\node[v, minimum size=\nodesize] (11) at (0* \wid,0*\hei) {};
\node[v, minimum size=\nodesize] (12) at (0* \wid,1*\hei) {};
\node[v, minimum size=\nodesize] (13) at (0* \wid,2*\hei) {};
\node[v, minimum size=\nodesize] (14) at (0* \wid,3*\hei) {};
\node[] (1h1) at (0.5* \wid,1*\hei) {};
\node[] (1h2) at (0.5* \wid,2*\hei) {};

\draw[e] (11) to[in =0, out =180] (0h1);
\draw[e] (12) to[in =0, out =180] (0h2);
\draw[e] (13) to[in =180, out =180] (14);

\draw[e] (12) to[in =0, out =0] (13);
\draw[e] (11) to[out = 0, in = 180] (1h1);
\draw[e] (14) to[out = 0, in = 180] (1h2);

\draw[very thick] (0* \wid,-0.25*\hei) -- (0* \wid,3.25*\hei);
\end{tikzpicture}
,
\textrm{ and one beginning with}
\begin{tikzpicture}[x=1.5cm,y=-.5cm,baseline=-0.7cm]

\def\wid{\standardWidth}
\def\hei{\standardHeight}
\def\nodesize{3}
\def\ang{90}

\node[] (0h1) at (-0.5* \wid,1*\hei) {};
\node[] (0h2) at (-0.5* \wid,2*\hei) {};
\node[v, minimum size=\nodesize] (11) at (0* \wid,0*\hei) {};
\node[v, minimum size=\nodesize] (12) at (0* \wid,1*\hei) {};
\node[v, minimum size=\nodesize] (13) at (0* \wid,2*\hei) {};
\node[v, minimum size=\nodesize] (14) at (0* \wid,3*\hei) {};
\node[] (1h1) at (0.5* \wid,1*\hei) {};
\node[] (1h2) at (0.5* \wid,2*\hei) {};

\draw[e] (13) to[in =00, out =180] (0h1);
\draw[e] (14) to[in =0, out =180] (0h2);
\draw[e] (12) to[in =180, out =180] (11);

\draw[e] (12) to[in =0, out =0] (13);
\draw[e] (11) to[out = 0, in = 180] (1h1);
\draw[e] (14) to[out = 0, in = 180] (1h2);

\draw[very thick] (0* \wid,-0.25*\hei) -- (0* \wid,3.25*\hei);
\end{tikzpicture}
.
\end{center}

The first face map $d_0$ deletes this bar, identifying the two words (since they differed only in this first letter), so carrying their difference to zero. That is, $d_0 = 0$ in $\overline{\mathrm{Bar}}_{*-1}(K^\vee,\TL_4,S(4,2))$, so the differential may be rewritten as $d = \sum_{i = 0}^{p-1}(-1)^i d_i = \sum_{i=1}^{p-1}(-1)^i d_i$. The common part of a difference of two words in $C_*^{LR}[1,0]$ differing only in their first letter is precisely a word in $C_*^{LR}[1,0]$ having one letter less.

These considerations identify $\overline{\mathrm{Bar}}_{*-1}(K^\vee,\TL_4,S(4,2))$ as an augmented suspension of $C_*^{LR}[1,0]$ (whose homology we know by \cref{lem:openEnds}). The augmentation sends (the suspension of) the single homology class to the generator
\begin{center}
    \begin{tikzpicture}[x=1.5cm,y=-.5cm,baseline=-0.7cm]

\def\wid{\standardWidth}
\def\hei{\standardHeight}
\def\nodesize{3}
\def\ang{90}

\node[] (0h1) at (-0.5* \wid,1*\hei) {};
\node[] (0h2) at (-0.5* \wid,2*\hei) {};
\node[v, minimum size=\nodesize] (11) at (0* \wid,0*\hei) {};
\node[v, minimum size=\nodesize] (12) at (0* \wid,1*\hei) {};
\node[v, minimum size=\nodesize] (13) at (0* \wid,2*\hei) {};
\node[v, minimum size=\nodesize] (14) at (0* \wid,3*\hei) {};
\node[] (1h1) at (0.5* \wid,1*\hei) {};
\node[] (1h2) at (0.5* \wid,2*\hei) {};

\draw[e] (11) to[in =0, out =180] (0h1);
\draw[e] (12) to[in =0, out =180] (0h2);
\draw[e] (13) to[in =180, out =180] (14);

\draw[e] (12) to[in =0, out =0] (13);
\draw[e] (11) to[out = 0, in = 180] (1h1);
\draw[e] (14) to[out = 0, in = 180] (1h2);

\draw[very thick] (0* \wid,-0.25*\hei) -- (0* \wid,3.25*\hei);
\end{tikzpicture}
\quad $-$
\quad
\begin{tikzpicture}[x=1.5cm,y=-.5cm,baseline=-0.7cm]

\def\wid{\standardWidth}
\def\hei{\standardHeight}
\def\nodesize{3}
\def\ang{90}

\node[] (0h1) at (-0.5* \wid,1*\hei) {};
\node[] (0h2) at (-0.5* \wid,2*\hei) {};
\node[v, minimum size=\nodesize] (11) at (0* \wid,0*\hei) {};
\node[v, minimum size=\nodesize] (12) at (0* \wid,1*\hei) {};
\node[v, minimum size=\nodesize] (13) at (0* \wid,2*\hei) {};
\node[v, minimum size=\nodesize] (14) at (0* \wid,3*\hei) {};
\node[] (1h1) at (0.5* \wid,1*\hei) {};
\node[] (1h2) at (0.5* \wid,2*\hei) {};

\draw[e] (13) to[in =00, out =180] (0h1);
\draw[e] (14) to[in =0, out =180] (0h2);
\draw[e] (12) to[in =180, out =180] (11);

\draw[e] (12) to[in =0, out =0] (13);
\draw[e] (11) to[out = 0, in = 180] (1h1);
\draw[e] (14) to[out = 0, in = 180] (1h2);

\draw[very thick] (0* \wid,-0.25*\hei) -- (0* \wid,3.25*\hei);
\end{tikzpicture}
\end{center}
of $\overline{\mathrm{Bar}}_{0}(K^\vee,\TL_4,S(4,2))$, so the homology of $\overline{\mathrm{Bar}}_{*-1}(K^\vee,\TL_4,S(4,2))$ vanishes. The long exact sequence obtained from our short exact sequence of chain complexes then identifies the homology of $C_*^R[1,0]$ with that of $C_*^{LR}[1,0]$, which again is contractible by \cref{lem:openEnds}. Since $C_*^L[1,0] \cong C_*^R[1,0]$ by horizontal reflection, it is also contractible, and an analogous argument with the second short exact sequence now gives contractbility of $C_*[1,0]$. \end{proof}

\subsection{Part (\ref{part:2})}

In this section we show that the homology of the complex $C_*[2,0]$ with $2$ loops and no dividers is a single copy of $R$ in degree 3, establishing the second part of \cref{thm:mainTechnical}.

\begin{lemma} \label{lem:2noAlphabet} A graffito with $2$ loops and no dividers contains precisely one of the following letters, and this letter occurs precisely once.
\begin{center}
\begin{tikzpicture}[x=1.5cm,y=-.5cm,baseline=-1.05cm]

\def\wid{\standardWidth}
\def\hei{\standardHeight}
\def\nodesize{3}
\def\ang{90}

\node[] (0h1) at (-0.5* \wid,1*\hei) {};
\node[] (0h2) at (-0.5* \wid,2*\hei) {};
\node[v, minimum size=\nodesize] (11) at (0* \wid,0*\hei) {};
\node[v, minimum size=\nodesize] (12) at (0* \wid,1*\hei) {};
\node[v, minimum size=\nodesize] (13) at (0* \wid,2*\hei) {};
\node[v, minimum size=\nodesize] (14) at (0* \wid,3*\hei) {};
\node[] (1h1) at (0.5* \wid,1*\hei) {};
\node[] (1h2) at (0.5* \wid,2*\hei) {};

\draw[e] (11) to[in =0, out =180] (0h1);
\draw[e] (14) to[in =0, out =180] (0h2);
\draw[e] (13) to[in =180, out =180] (12);

\draw[e] (12) to[in =0, out =0] (13);
\draw[e] (11) to[out = 0, in = 180] (1h1);
\draw[e] (14) to[out = 0, in = 180] (1h2);

\draw[very thick] (0* \wid,-0.25*\hei) -- (0* \wid,3.25*\hei);
\end{tikzpicture}
,
\begin{tikzpicture}[x=1.5cm,y=-.5cm,baseline=-1.05cm]

\def\wid{\standardWidth}
\def\hei{\standardHeight}
\def\nodesize{3}
\def\ang{90}

\node[] (0h1) at (-0.5* \wid,1*\hei) {};
\node[] (0h2) at (-0.5* \wid,2*\hei) {};
\node[v, minimum size=\nodesize] (11) at (0* \wid,0*\hei) {};
\node[v, minimum size=\nodesize] (12) at (0* \wid,1*\hei) {};
\node[v, minimum size=\nodesize] (13) at (0* \wid,2*\hei) {};
\node[v, minimum size=\nodesize] (14) at (0* \wid,3*\hei) {};
\node[] (1h1) at (0.5* \wid,1*\hei) {};
\node[] (1h2) at (0.5* \wid,2*\hei) {};

\draw[e] (11) to[in =0, out =180] (0h1);
\draw[e] (12) to[in =0, out =180] (0h2);
\draw[e] (13) to[in =180, out =180] (14);

\draw[e] (13) to[in =0, out =0] (14);
\draw[e] (11) to[out = 0, in = 180] (1h1);
\draw[e] (12) to[out = 0, in = 180] (1h2);

\draw[very thick] (0* \wid,-0.25*\hei) -- (0* \wid,3.25*\hei);
\end{tikzpicture}
,
\begin{tikzpicture}[x=1.5cm,y=-.5cm,baseline=-1.05cm]

\def\wid{\standardWidth}
\def\hei{\standardHeight}
\def\nodesize{3}
\def\ang{90}

\node[] (0h1) at (-0.5* \wid,1*\hei) {};
\node[] (0h2) at (-0.5* \wid,2*\hei) {};
\node[v, minimum size=\nodesize] (11) at (0* \wid,0*\hei) {};
\node[v, minimum size=\nodesize] (12) at (0* \wid,1*\hei) {};
\node[v, minimum size=\nodesize] (13) at (0* \wid,2*\hei) {};
\node[v, minimum size=\nodesize] (14) at (0* \wid,3*\hei) {};
\node[] (1h1) at (0.5* \wid,1*\hei) {};
\node[] (1h2) at (0.5* \wid,2*\hei) {};

\draw[e] (12) to[in =180, out =180] (11);
\draw[e] (13) to[in =0, out =180] (0h1);
\draw[e] (14) to[in =0, out =180] (0h2);

\draw[e] (12) to[in =0, out =0] (11);
\draw[e] (14) to[out = 0, in = 180] (1h2);
\draw[e] (13) to[out = 0, in = 180] (1h1);

\draw[very thick] (0* \wid,-0.25*\hei) -- (0* \wid,3.25*\hei);
\end{tikzpicture}
,
\begin{tikzpicture}[x=1.5cm,y=-.5cm,baseline=-1.05cm]

\def\wid{\standardWidth}
\def\hei{\standardHeight}
\def\nodesize{3}
\def\ang{90}

\node[] (0h1) at (-0.5* \wid,1*\hei) {};
\node[] (0h2) at (-0.5* \wid,2*\hei) {};
\node[v, minimum size=\nodesize] (11) at (0* \wid,0*\hei) {};
\node[v, minimum size=\nodesize] (12) at (0* \wid,1*\hei) {};
\node[v, minimum size=\nodesize] (13) at (0* \wid,2*\hei) {};
\node[v, minimum size=\nodesize] (14) at (0* \wid,3*\hei) {};
\node[] (1h1) at (0.5* \wid,1*\hei) {};
\node[] (1h2) at (0.5* \wid,2*\hei) {};

\draw[e] (13) to[in =0, out =180] (0h1);
\draw[e] (14) to[in =0, out =180] (0h2);
\draw[e] (11) to[in =180, out =180] (12);

\draw[e] (13) to[in =0, out =0] (14);
\draw[e] (11) to[out = 0, in = 180] (1h1);
\draw[e] (12) to[out = 0, in = 180] (1h2);

\draw[very thick] (0* \wid,-0.25*\hei) -- (0* \wid,3.25*\hei);
\end{tikzpicture}
,
\begin{tikzpicture}[x=1.5cm,y=-.5cm,baseline=-1.05cm]

\def\wid{\standardWidth}
\def\hei{\standardHeight}
\def\nodesize{3}
\def\ang{90}

\node[] (0h1) at (-0.5* \wid,1*\hei) {};
\node[] (0h2) at (-0.5* \wid,2*\hei) {};
\node[v, minimum size=\nodesize] (11) at (0* \wid,0*\hei) {};
\node[v, minimum size=\nodesize] (12) at (0* \wid,1*\hei) {};
\node[v, minimum size=\nodesize] (13) at (0* \wid,2*\hei) {};
\node[v, minimum size=\nodesize] (14) at (0* \wid,3*\hei) {};
\node[] (1h1) at (0.5* \wid,1*\hei) {};
\node[] (1h2) at (0.5* \wid,2*\hei) {};

\draw[e] (13) to[in =180, out =180] (14);
\draw[e] (11) to[in =0, out =180] (0h1);
\draw[e] (12) to[in =0, out =180] (0h2);

\draw[e] (12) to[in =0, out =0] (11);
\draw[e] (14) to[out = 0, in = 180] (1h2);
\draw[e] (13) to[out = 0, in = 180] (1h1);

\draw[very thick] (0* \wid,-0.25*\hei) -- (0* \wid,3.25*\hei);
\end{tikzpicture}
\end{center}

\begin{center}
\begin{tikzpicture}[x=1.5cm,y=-.5cm,baseline=-1.05cm]

\def\wid{\standardWidth}
\def\hei{\standardHeight}
\def\nodesize{3}
\def\ang{90}

\node[] (0h1) at (0.5* \wid,1*\hei) {};
\node[] (0h2) at (0.5* \wid,2*\hei) {};
\node[v, minimum size=\nodesize] (11) at (0* \wid,0*\hei) {};
\node[v, minimum size=\nodesize] (12) at (0* \wid,1*\hei) {};
\node[v, minimum size=\nodesize] (13) at (0* \wid,2*\hei) {};
\node[v, minimum size=\nodesize] (14) at (0* \wid,3*\hei) {};
\node[] (1h1) at (0.5* \wid,1*\hei) {};
\node[] (1h2) at (0.5* \wid,2*\hei) {};

\draw[e] (11) to[in =180, out =180] (14);
\draw[e] (13) to[in =180, out =180] (12);

\draw[e] (12) to[in =0, out =0] (13);
\draw[e] (11) to[out = 0, in = 180] (1h1);
\draw[e] (14) to[out = 0, in = 180] (1h2);

\draw[very thick] (0* \wid,-0.25*\hei) -- (0* \wid,3.25*\hei);
\end{tikzpicture}
,
\begin{tikzpicture}[x=1.5cm,y=-.5cm,baseline=-1.05cm]

\def\wid{\standardWidth}
\def\hei{\standardHeight}
\def\nodesize{3}
\def\ang{90}

\node[] (0h1) at (-0.5* \wid,1*\hei) {};
\node[] (0h2) at (-0.5* \wid,2*\hei) {};
\node[v, minimum size=\nodesize] (11) at (0* \wid,0*\hei) {};
\node[v, minimum size=\nodesize] (12) at (0* \wid,1*\hei) {};
\node[v, minimum size=\nodesize] (13) at (0* \wid,2*\hei) {};
\node[v, minimum size=\nodesize] (14) at (0* \wid,3*\hei) {};
\node[] (1h1) at (0.5* \wid,1*\hei) {};
\node[] (1h2) at (0.5* \wid,2*\hei) {};

\draw[e] (11) to[in =180, out =180] (12);
\draw[e] (13) to[in =180, out =180] (14);

\draw[e] (13) to[in =0, out =0] (14);
\draw[e] (11) to[out = 0, in = 180] (1h1);
\draw[e] (12) to[out = 0, in = 180] (1h2);

\draw[very thick] (0* \wid,-0.25*\hei) -- (0* \wid,3.25*\hei);
\end{tikzpicture}
,
\begin{tikzpicture}[x=1.5cm,y=-.5cm,baseline=-1.05cm]

\def\wid{\standardWidth}
\def\hei{\standardHeight}
\def\nodesize{3}
\def\ang{90}

\node[] (0h1) at (-0.5* \wid,1*\hei) {};
\node[] (0h2) at (-0.5* \wid,2*\hei) {};
\node[v, minimum size=\nodesize] (11) at (0* \wid,0*\hei) {};
\node[v, minimum size=\nodesize] (12) at (0* \wid,1*\hei) {};
\node[v, minimum size=\nodesize] (13) at (0* \wid,2*\hei) {};
\node[v, minimum size=\nodesize] (14) at (0* \wid,3*\hei) {};
\node[] (1h1) at (0.5* \wid,1*\hei) {};
\node[] (1h2) at (0.5* \wid,2*\hei) {};

\draw[e] (12) to[in =180, out =180] (11);
\draw[e] (14) to[in =180, out =180] (13);

\draw[e] (12) to[in =0, out =0] (11);
\draw[e] (14) to[out = 0, in = 180] (1h2);
\draw[e] (13) to[out = 0, in = 180] (1h1);

\draw[very thick] (0* \wid,-0.25*\hei) -- (0* \wid,3.25*\hei);
\end{tikzpicture}
\end{center}

\begin{center}
\begin{tikzpicture}[x=1.5cm,y=-.5cm,baseline=-1.05cm]

\def\wid{\standardWidth}
\def\hei{\standardHeight}
\def\nodesize{3}
\def\ang{90}

\node[] (0h1) at (-0.5* \wid,1*\hei) {};
\node[] (0h2) at (-0.5* \wid,2*\hei) {};
\node[v, minimum size=\nodesize] (11) at (0* \wid,0*\hei) {};
\node[v, minimum size=\nodesize] (12) at (0* \wid,1*\hei) {};
\node[v, minimum size=\nodesize] (13) at (0* \wid,2*\hei) {};
\node[v, minimum size=\nodesize] (14) at (0* \wid,3*\hei) {};
\node[] (1h1) at (0.5* \wid,1*\hei) {};
\node[] (1h2) at (0.5* \wid,2*\hei) {};

\draw[e] (11) to[in =0, out =0] (14);
\draw[e] (12) to[in =0, out =0] (13);

\draw[e] (12) to[in =180, out =180] (13);
\draw[e] (11) to[out = 180, in = 0] (0h1);
\draw[e] (14) to[out = 180, in = 0] (0h2);

\draw[very thick] (0* \wid,-0.25*\hei) -- (0* \wid,3.25*\hei);
\end{tikzpicture}
,
\begin{tikzpicture}[x=1.5cm,y=-.5cm,baseline=-1.05cm]

\def\wid{\standardWidth}
\def\hei{\standardHeight}
\def\nodesize{3}
\def\ang{90}

\node[] (0h1) at (-0.5* \wid,1*\hei) {};
\node[] (0h2) at (-0.5* \wid,2*\hei) {};
\node[v, minimum size=\nodesize] (11) at (0* \wid,0*\hei) {};
\node[v, minimum size=\nodesize] (12) at (0* \wid,1*\hei) {};
\node[v, minimum size=\nodesize] (13) at (0* \wid,2*\hei) {};
\node[v, minimum size=\nodesize] (14) at (0* \wid,3*\hei) {};
\node[] (1h1) at (0.5* \wid,1*\hei) {};
\node[] (1h2) at (0.5* \wid,2*\hei) {};

\draw[e] (11) to[in =0, out =0] (12);
\draw[e] (13) to[in =0, out =0] (14);

\draw[e] (13) to[in =180, out =180] (14);
\draw[e] (11) to[out = 180, in = 0] (0h1);
\draw[e] (12) to[out = 180, in = 0] (0h2);

\draw[very thick] (0* \wid,-0.25*\hei) -- (0* \wid,3.25*\hei);
\end{tikzpicture}
,
\begin{tikzpicture}[x=1.5cm,y=-.5cm,baseline=-1.05cm]

\def\wid{\standardWidth}
\def\hei{\standardHeight}
\def\nodesize{3}
\def\ang{90}

\node[] (0h1) at (-0.5* \wid,1*\hei) {};
\node[] (0h2) at (-0.5* \wid,2*\hei) {};
\node[v, minimum size=\nodesize] (11) at (0* \wid,0*\hei) {};
\node[v, minimum size=\nodesize] (12) at (0* \wid,1*\hei) {};
\node[v, minimum size=\nodesize] (13) at (0* \wid,2*\hei) {};
\node[v, minimum size=\nodesize] (14) at (0* \wid,3*\hei) {};
\node[] (1h1) at (0.5* \wid,1*\hei) {};
\node[] (1h2) at (0.5* \wid,2*\hei) {};

\draw[e] (14) to[in =0, out =0] (13);
\draw[e] (12) to[in =0, out =0] (11);

\draw[e] (12) to[in =180, out =180] (11);
\draw[e] (14) to[out = 180, in = 0] (0h2);
\draw[e] (13) to[out = 180, in = 0] (0h1);

\draw[very thick] (0* \wid,-0.25*\hei) -- (0* \wid,3.25*\hei);
\end{tikzpicture}
\end{center}

\begin{center}

\begin{tikzpicture}[x=1.5cm,y=-.5cm,baseline=-1.05cm]
\def\wid{\standardWidth}
\def\hei{\standardHeight}
\def\nodesize{3}
\def\ang{90}

\node[v, minimum size=\nodesize] (11) at (0* \wid,0*\hei) {};
\node[v, minimum size=\nodesize] (12) at (0* \wid,1*\hei) {};
\node[v, minimum size=\nodesize] (13) at (0* \wid,2*\hei) {};
\node[v, minimum size=\nodesize] (14) at (0* \wid,3*\hei) {};

\draw[e] (11) to[in =180, out =180] (14);
\draw[e] (13) to[in =180, out =180] (12);

\draw[e] (12) to[in =0, out =0] (13);
\draw[e] (11) to[out = 0, in = 0] (14);

\draw[very thick] (0* \wid,-0.25*\hei) -- (0* \wid,3.25*\hei);
\end{tikzpicture}
,
\quad
\begin{tikzpicture}[x=1.5cm,y=-.5cm,baseline=-1.05cm]
\def\wid{\standardWidth}
\def\hei{\standardHeight}
\def\nodesize{3}
\def\ang{90}

\node[v, minimum size=\nodesize] (11) at (0* \wid,0*\hei) {};
\node[v, minimum size=\nodesize] (12) at (0* \wid,1*\hei) {};
\node[v, minimum size=\nodesize] (13) at (0* \wid,2*\hei) {};
\node[v, minimum size=\nodesize] (14) at (0* \wid,3*\hei) {};

\draw[e] (11) to[in =180, out =180] (12);
\draw[e] (13) to[in =180, out =180] (14);

\draw[e] (11) to[in =0, out =0] (12);
\draw[e] (13) to[out = 0, in = 0] (14);

\draw[very thick] (0* \wid,-0.25*\hei) -- (0* \wid,3.25*\hei);
\end{tikzpicture}
\end{center}
\end{lemma}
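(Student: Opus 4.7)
The plan is to prove the lemma by direct case analysis, grounded in the alphabet decomposition established in \cref{subsection: ChangeOfPerspective} and \cref{lem:alphabet}. Every graffito in $\CPLr(4)_*$ is uniquely a word in letters of that alphabet, and the $0$-divider condition forces every adjacency between consecutive letters to use the $2$+$2$ matching. Hence, for length $p \geq 2$, the endpoint letters are of types $(0,2)$ and $(2,0)$ while all internal letters are of type $(2,2)$; a length-$1$ graffito is a single $(0,0)$ letter.

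First I would classify each letter of \cref{lem:alphabet} by two features of its internal arcs: (i) whether it contains a \emph{rugby-ball}, meaning its two internal caps are on the same pair of nodes, one on each side of the bar, forming a closed loop disjoint from the hanging arcs; and (ii) if no rugby-ball, whether it is a \emph{double bounce-back}, routing both hanging arcs on each side back to the same side rather than through to the other. By direct inspection of the alphabet, the letters listed in the statement are precisely those containing a rugby-ball (the three in each of rows $2$, $3$, and three of the five in row $1$), the two double bounce-backs (completing row $1$), or the two $(0,0)$ letters whose arcs form two rugby-balls (row $4$).

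The crux is the following structural claim: for any $0$-divider graffito of length $p \geq 2$, the number of loops equals $1 + r + b$, where $r$ is the number of rugby-ball letters and $b$ is the number of double bounce-back letters. Each rugby-ball contributes a disjoint $2$-cycle; the reduced $2$-regular graph on the non-rugby-ball nodes then splits into the ``main'' loops. Without double bounce-backs, planar threading through pass-through letters, closed off at the $(0,2)$ and $(2,0)$ endpoint letters, forms a single main loop. Each double bounce-back letter severs this main loop into two at its position, because the bounced thread-pairs close off locally on each side. For length-$1$ graffiti the claim reduces to direct inspection of the four $(0,0)$ letters in \cref{lem:alphabet}: exactly two have two loops, matching row $4$ of the current statement.

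Given the structural claim, the lemma is immediate: a graffito with $2$ loops and $0$ dividers has $r + b = 1$, meaning exactly one special letter occurring exactly once. The main obstacle is proving the structural claim, in particular the ``splits the main loop into two'' effect of a double bounce-back letter. I would handle this by induction on length, using the short exact sequences of \cref{rmk:SES} analogously to \cref{prop:main1}---peeling off an endpoint letter and tracking how the loop count evolves. Alternatively one could argue directly by threading: trace the main loop through the graffito, and verify that each rugby-ball letter inserts a disjoint $2$-cycle while each double bounce-back letter forces the main loop to close up separately on each side.
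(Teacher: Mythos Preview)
Your approach via the loop-counting formula $\#(\text{loops}) = 1 + r + b$ is a valid alternative, and your classification of the listed letters into ``rugby-balls'' and ``double bounce-backs'' is correct. However, the paper's argument is dramatically simpler and avoids the formula entirely. It observes directly that since there are no dividers, some bar must pin both loops; and since we are in the reduced bar construction (so at most two through-connections at each interface), at most one loop can continue past that bar to the left and at most one to the right, forcing that bar to be the unique one meeting both loops. The listed letters are then identified as exactly those whose arcs belong to two distinct loops. This is a two-sentence proof, whereas your route requires establishing and proving a structural formula.

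On your proposed proof of the formula: the suggestion to use the short exact sequences of \cref{rmk:SES} ``analogously to \cref{prop:main1}'' is a misdirection. Those sequences compare the \emph{modules} $S(4,2)$ and $S(4,0)$ and are used in \cref{prop:main1} to relate the \emph{homology} of open and closed complexes; they do not give an inductive mechanism for the combinatorial loop count of an individual graffito. Your alternative ``threading'' argument is the right idea, but as written it is just the assertion that the formula holds, not a proof. In particular, the claim that a double bounce-back ``severs the main loop into two'' needs an argument that the left and right halves each close up into \emph{exactly one} loop (absent further pivots), which is essentially \cref{lem:oneLoopAlphabet} applied to each half after capping; this is doable but is the entire content. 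The paper's local argument sidesteps all of this.
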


\begin{proof} Since there are no dividers, there must be at least one bar where both loops are pinned. Since we are working in the reduced bar construction, at most one of these loops can continue to the left, and at most one can continue to the right. In particular, this bar is the only one pinning both loops. The letters shown above are precisely those which involve components of two distinct loops. \end{proof}

We will call this letter the \emph{pivot}. Deleting the bar at the pivot either unpins a loop, giving zero, or results in an extra divider, so we obtain the following.

\begin{lemma} \label{lem:deletePivotGet0} In $C_*[2,0]$, deleting the bar at the pivot gives zero. \qed
\end{lemma}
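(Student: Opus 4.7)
The plan is to leverage the constraint that graffiti in $C_*[2,0]$ have no dividers and exactly two loops: by planarity together with the fact that arcs in a graffito only connect adjacent bars, each loop is pinned on a contiguous interval of bars, and by \cref{lem:2noAlphabet} these two intervals overlap in exactly the unique pivot bar $j$. So, after possibly swapping the two loops, their supports take the form $[\ell, j]$ and $[j, r]$ with $\ell \leq j \leq r$.

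With this shape in hand, the face map deleting bar $j$ has one of two effects. If $\ell = j$ then the first loop is pinned only at the pivot; deleting the pivot frees it as an unpinned loop and introduces a factor of $a = 0$, which kills the term in $\CPLr(4)$ and hence in $C_*[2,0]$. The case $r = j$ is symmetric. If $\ell < j < r$, then after deletion the two loops are supported on the disjoint intervals $[\ell, j-1]$ and (after renumbering) $[j, r-1]$; no bar of the resulting graffito pins both loops, and a vertical line at the former position of bar $j$ meets no arc, producing a new divider. The result therefore has at least one divider, so it is zero in the quotient defining $C_*[2,0]$.

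The main piece of work is simply establishing the support-interval observation, which is essentially immediate from planarity: if a loop were pinned at bars $i_1 < i_2$ but not at some intermediate bar $i$, then to travel from bar $i_1$ to bar $i_2$ it would need to skip bar $i$, yet arcs in the graffito only connect adjacent bars, so every such traversal forces an arc endpoint on bar $i$, a contradiction. The dichotomy ``unpinning versus new divider'' then handles all pivot letters of \cref{lem:2noAlphabet} uniformly, including the single-bar graffiti (where both loops sit at the unique bar and the deletion unpins both, yielding a factor $a^2 = 0$) and the end-type pivot letters (which are precisely the boundary cases $\ell = j$ or $r = j$). I do not expect any step to present a genuine obstacle.
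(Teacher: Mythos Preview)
Your approach is the paper's own: the justification given there is simply that deleting the pivot bar either unpins a loop (factor $a=0$) or produces an extra divider (hence zero in the quotient $C_*[2,0]$). One small slip in your write-up: from ``the two support intervals meet only at $j$'' it does not follow that they are of the form $[\ell,j]$ and $[j,r]$ --- when the pivot letter carries a small closed loop on the bar (e.g.\ the letter with a $2$--$3$ loop and through-strands at $1,4$), one loop has support $\{j\}$ while the other extends on \emph{both} sides of $j$. That case is still covered by your unpinning branch (the small loop becomes free), so the dichotomy and the conclusion stand.
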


The idea is now that if we restrict ourselves to only one pivot, for example
\begin{center}
    \begin{tikzpicture}[x=1.5cm,y=-.5cm,baseline=-1.05cm]

\def\wid{\standardWidth}
\def\hei{\standardHeight}
\def\nodesize{3}
\def\ang{90}

\node[] (0h1) at (-0.5* \wid,1*\hei) {};
\node[] (0h2) at (-0.5* \wid,2*\hei) {};
\node[v, minimum size=\nodesize] (11) at (0* \wid,0*\hei) {};
\node[v, minimum size=\nodesize] (12) at (0* \wid,1*\hei) {};
\node[v, minimum size=\nodesize] (13) at (0* \wid,2*\hei) {};
\node[v, minimum size=\nodesize] (14) at (0* \wid,3*\hei) {};
\node[] (1h1) at (0.5* \wid,1*\hei) {};
\node[] (1h2) at (0.5* \wid,2*\hei) {};

\draw[e] (11) to[in =0, out =180] (0h1);
\draw[e] (12) to[in =0, out =180] (0h2);
\draw[e] (13) to[in =180, out =180] (14);

\draw[e] (13) to[in =0, out =0] (14);
\draw[e] (11) to[out = 0, in = 180] (1h1);
\draw[e] (12) to[out = 0, in = 180] (1h2);

\draw[very thick] (0* \wid,-0.25*\hei) -- (0* \wid,3.25*\hei);
\end{tikzpicture}
\end{center}
then the resulting complex is just a copy of the suspension of the tensor product $C_*^R[1,0] \otimes_R C_*^L[1,0]$ of the open complexes of \cref{def:complexCij}. The homology of $C_*[2,0]$ can now be found by thinking about how the complexes associated to the different letters fit together.

We have a direct sum decomposition $$C_*[2,0] = C_*[2,0]^{\mathrm{in}} \oplus C_*[2,0]^{\mathrm{out}},$$ where $C_*[2,0]^{\mathrm{in}}$ is spanned by graffiti where one of the two loops is nested inside the other, and $C_*[2,0]^{\mathrm{out}}$ is spanned by the other graffiti.

In both cases, the following will be our `atomic' pieces. For a pivot $p$, let $C_*(p)$ be the subquotient of $C_*[2,0]$ where the pivot is $p$. For brevity, we say that $p$ is \emph{open} on the left or right if it has two connections on that side, and \emph{closed} otherwise.

\begin{lemma} The complexes $C_*(p)$ are described as follows:
\begin{enumerate}
    \item If $p$ is closed on both sides, then $C_*(p)$ consists only of the single letter $p$ in degree 1.
    \item If $p$ is open on the right (respectively left), then $C_*(p)$ is isomorphic to the suspension of the complex $C_*^L[1,0]$ (respectively $C_*^R[1,0]$) of half-open graffiti which close up to have one loop and no dividers.
    \item If $p$ is open on both sides, then $C_*(p)$ is isomorphic to the suspension of the tensor product $C_*^R[1,0] \otimes_R C_*^L[1,0]$.
\end{enumerate}
\end{lemma}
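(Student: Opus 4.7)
The plan is to decompose any graffito $g$ in $C_*(p)$ according to the structure of its pivot. Because $p$ is the unique bar where both loops meet (\cref{lem:2noAlphabet}), every bar strictly to the left or strictly to the right of the pivot has at most one loop passing through it. Since we are working in $C_*[2,0]$ there are no dividers in $g$; in particular, the matching condition on consecutive letters forces $p$ to be the leftmost letter of $g$ whenever $p$ is closed on the left (otherwise a vertical line between $p$ and its left neighbour would be a divider), and dually $p$ must be the rightmost letter whenever $p$ is closed on the right.

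For case~(1), if $p$ is closed on both sides then $p$ is simultaneously leftmost and rightmost, so $g=p$ and $C_*(p) = R\{p\}$ concentrated in degree~$1$. For case~(2), say $p$ is open on the right and closed on the left (the symmetric case is analogous); then $p$ is the leftmost letter and we can write $g = p \cdot g'$, where $g'$ is the right-continuation of $g$ past the pivot. Since one loop extends into $g'$ and there are no dividers in $g$, the continuation $g'$ is a left-open graffito with one loop and no dividers, i.e.\ an element of $C_*^L[1,0]$. The map $g \mapsto g'$ is an $R$-module isomorphism between the degree-$(n{+}1)$ part of $C_*(p)$ and the degree-$n$ part of $C_*^L[1,0]$, matching the claimed degree shift. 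For case~(3), $p$ being open on both sides means that neither end of $g$ can terminate at $p$, so $g$ decomposes uniquely as $g^L \cdot p \cdot g^R$ with $g^L \in C_*^R[1,0]$ and $g^R \in C_*^L[1,0]$. The map $g \mapsto g^L \otimes g^R$ gives the claimed $R$-module isomorphism, again with a degree shift of one to account for the pivot.

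Finally, I check that these $R$-module isomorphisms are chain maps. The total differential $d = \sum_i (-1)^i d_i$ on $C_*(p)$ splits into three pieces: face maps at bars strictly to the left of the pivot, the face map at the pivot bar itself, and face maps at bars strictly to the right of the pivot. The face map at the pivot vanishes by \cref{lem:deletePivotGet0}, and each remaining face map acts only on one side of the pivot and matches the corresponding face map of $C_*^R[1,0]$ or $C_*^L[1,0]$. The main point to verify carefully is the sign bookkeeping in case~(3): the relabelling of right-hand face maps across the pivot produces a shift in the alternating sign, which combines with the suspension convention to yield exactly the standard Koszul sign $(-1)^{|g^L|}$ of the tensor-product differential. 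This is essentially a consequence of the pivot's face map being identically zero; the rest of the argument is purely combinatorial bookkeeping.
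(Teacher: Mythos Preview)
Your proposal is correct and follows essentially the same approach as the paper: decompose each graffito at the pivot, use the closure conditions to force the pivot's position (leftmost, rightmost, or internal), and invoke \cref{lem:deletePivotGet0} to kill the face map at the pivot so that the remaining differential matches that of the open complexes. Your treatment is in fact more explicit than the paper's, particularly your verification that the isomorphisms are chain maps and your discussion of the Koszul sign in case~(3); the paper simply asserts that once the pivot face map vanishes ``the result follows.''
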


\begin{proof} Part $(1)$ is obtained by noting that if a word with more than one letter includes a letter with no connections on either side, then it has a divider.

For Part $(2)$, suppose given a word $w$ containing such a pivot $p$. Note that:
\begin{itemize}
    \item since there are no dividers, $p$ must be either the first or last letter of $w$,
    \item since $p$ is open on one side, $w$ must consist of at least two letters, and
    \item the part of $w$ excluding $p$ is precisely an element of $C_*^R[1,0]$ or $C_*^L[1,0]$, depending on whether $p$ is the first or last letter.
\end{itemize}
By \cref{lem:deletePivotGet0}, the face map at the pivot is zero, so the result follows.

Part $(3)$ is similar. Given a word $w$ containing such a pivot $p$, note that $p$ must be internal to $w$, and that $w$ must have at least one letter preceding $p$ and at least one letter following. The part preceding $p$ is precisely a right-open graffito in $C_*^R[1,0]$, and the part following is precisely a left-open graffito in $C_*^L[1,0]$. As before, the claim then follows from \cref{lem:deletePivotGet0}.
\end{proof}

Since $C_*^R[1,0]$ and $C_*^L[1,0]$ are contractible by \cref{prop:main1}, we obtain the following.
\begin{corollary} \label{cor:Cphomology} The complexes $C_*(p)$ have the following homology:
\begin{itemize}
    \item If $p$ is closed on both sides, then the homology of $C_*(p)$ consists only a copy of $R$ in degree 1, generated by $p$.
    \item If $p$ is open on one side, then the homology of $C_*(p)$ consists of a single copy of $R$ in degree 2, generated by any word of length 2.
    \item If $p$ is open on both sides, then the homology of $C_*(p)$ is generated by a single copy of $R$ in degree 3, generated by any word of length 3. \qed
\end{itemize}
\end{corollary}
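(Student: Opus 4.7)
The plan is to deduce each case directly from the structural identifications of $C_*(p)$ recorded in the previous lemma, feeding in the homology computation of the half-open complexes $C_*^L[1,0]$ and $C_*^R[1,0]$ provided by \cref{prop:main1}. No further spectral sequences or long exact sequences are needed.

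First I would dispatch the case where $p$ is closed on both sides: the previous lemma says $C_*(p)$ is the free $R$-module on $p$ sitting in degree~$1$, so the differential is automatically zero for degree reasons and $H_*(C_*(p))$ is $R\{p\}$ concentrated in degree~$1$. Next, for $p$ open on exactly one side, the same lemma identifies $C_*(p)$ with a suspension of $C_*^L[1,0]$ or $C_*^R[1,0]$. \cref{prop:main1} computes the homology of each of these as a single copy of $R$ in degree~$1$ generated by any single letter; suspending shifts this to degree~$2$, and the image of any length-$1$ word is a length-$2$ word in $C_*(p)$ (the pivot juxtaposed with one neighbour), yielding the stated generator.

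Finally, for $p$ open on both sides, the lemma identifies $C_*(p)$ with a suspension of $C_*^R[1,0] \otimes_R C_*^L[1,0]$. Because each factor has homology concentrated in a single degree and free of rank~$1$ over $R$ by \cref{prop:main1}, the K\"unneth formula produces no $\Tor$ correction and gives the tensor product a single copy of $R$ in degree $1+1=2$, generated by the tensor product of any two single letters. Suspending once more yields the stated single copy of $R$ in degree~$3$, generated by any length-$3$ word (the pivot flanked by one letter on each side). The only step that is not pure bookkeeping is the K\"unneth application, but it is routine here since both factors have free homology, so no flatness or projective resolution hypothesis on $R$ is required.
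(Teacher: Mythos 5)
Your proposal is correct and is exactly the reasoning the paper intends by placing the corollary directly after the structure lemma for $C_*(p)$, with the remark that $C_*^R[1,0]$ is contractible by \cref{prop:main1}: read off each case from the identification of $C_*(p)$, feed in \cref{prop:main1} for the homology of the half-open complexes (a single copy of $R$ in degree $1$), and use the suspension shift and the K\"unneth theorem (trivial here because all chain groups and the surviving homology are $R$-free) for the doubly-open case. The paper deliberately omits writing this out; you have simply supplied the implicit details correctly.
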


\subsubsection{The nested summand}

Write $F^0(C_*[2,0]^{\mathrm{in}})$ for the subcomplex of $C_*[2,0]^{\mathrm{in}}$ spanned by graffiti where the pivot is closed on both sides. Write $F^1(C_*[2,0]^{\mathrm{in}})$ for the subcomplex spanned by $F^0(C_*[2,0]^{\mathrm{in}})$ and graffiti where the pivot is open on one side (either left or right) and write $F^2(C_*[2,0]^{\mathrm{in}})$ for the subcomplex spanned by $F^1(C_*[2,0]^{\mathrm{in}})$ and graffiti where the pivot is open on both sides, so that in particular $F^2(C_*[2,0]^{\mathrm{in}})= C_*[2,0]^{\mathrm{in}}$. The boundary map can close off the sides of the pivot, but not open them, so this gives a three-step filtration of $C_*[2,0]^{\mathrm{in}}$, and we can consider the resulting spectral sequence.

Since letters can have at most two connections on each side, it follows that if one loop is nested inside the other then the inner loop must have length 2 and be entirely supported on the pivot. Consulting the table of pivots in \cref{lem:2noAlphabet} then gives the following lemma (here we actually substitute in particular diagrams $p$ in $C_*(p)$, which may make it look like new notation at first sight, though it is not).

\begin{lemma} \label{lem:2nestedQuots} The filtration quotients are described as follows.
\begin{itemize}
    \item $F^0(C_*[2,0]^{\mathrm{in}})$ consists only of the graffito \begin{center}

\begin{tikzpicture}[x=1.5cm,y=-.5cm,baseline=-1.05cm]
\def\wid{\standardWidth}
\def\hei{\standardHeight}
\def\nodesize{3}
\def\ang{90}

\node[v, minimum size=\nodesize] (11) at (0* \wid,0*\hei) {};
\node[v, minimum size=\nodesize] (12) at (0* \wid,1*\hei) {};
\node[v, minimum size=\nodesize] (13) at (0* \wid,2*\hei) {};
\node[v, minimum size=\nodesize] (14) at (0* \wid,3*\hei) {};

\draw[e] (11) to[in =180, out =180] (14);
\draw[e] (13) to[in =180, out =180] (12);

\draw[e] (12) to[in =0, out =0] (13);
\draw[e] (11) to[out = 0, in = 0] (14);

\draw[very thick] (0* \wid,-0.25*\hei) -- (0* \wid,3.25*\hei);
\end{tikzpicture}
\end{center} in degree 1.
\item We have a direct sum decomposition \begin{center}
$\faktor{F^1(C_*[2,0]^{\mathrm{in}})}{F^0(C_*[2,0]^{\mathrm{in}})}$ 
\quad
$=$
\quad
$C_*($
\begin{tikzpicture}[x=1.5cm,y=-.5cm,baseline=-0.7cm]

\def\wid{\standardWidth}
\def\hei{\standardHeight}
\def\nodesize{3}
\def\ang{90}

\node[] (0h1) at (-0.5* \wid,1*\hei) {};
\node[] (0h2) at (-0.5* \wid,2*\hei) {};
\node[v, minimum size=\nodesize] (11) at (0* \wid,0*\hei) {};
\node[v, minimum size=\nodesize] (12) at (0* \wid,1*\hei) {};
\node[v, minimum size=\nodesize] (13) at (0* \wid,2*\hei) {};
\node[v, minimum size=\nodesize] (14) at (0* \wid,3*\hei) {};

\draw[e] (11) to[in =0, out =0] (14);
\draw[e] (12) to[in =0, out =0] (13);

\draw[e] (12) to[in =180, out =180] (13);
\draw[e] (11) to[out = 180, in = 0] (0h1);
\draw[e] (14) to[out = 180, in = 0] (0h2);

\draw[very thick] (0* \wid,-0.25*\hei) -- (0* \wid,3.25*\hei);
\end{tikzpicture}
$)$
$\oplus$
$C_*($
\begin{tikzpicture}[x=1.5cm,y=-.5cm,baseline=-0.7cm]

\def\wid{\standardWidth}
\def\hei{\standardHeight}
\def\nodesize{3}
\def\ang{90}
\node[v, minimum size=\nodesize] (11) at (0* \wid,0*\hei) {};
\node[v, minimum size=\nodesize] (12) at (0* \wid,1*\hei) {};
\node[v, minimum size=\nodesize] (13) at (0* \wid,2*\hei) {};
\node[v, minimum size=\nodesize] (14) at (0* \wid,3*\hei) {};
\node[] (1h1) at (0.5* \wid,1*\hei) {};
\node[] (1h2) at (0.5* \wid,2*\hei) {};

\draw[e] (11) to[in =180, out =180] (14);
\draw[e] (13) to[in =180, out =180] (12);

\draw[e] (12) to[in =0, out =0] (13);
\draw[e] (11) to[out = 0, in = 180] (1h1);
\draw[e] (14) to[out = 0, in = 180] (1h2);

\draw[very thick] (0* \wid,-0.25*\hei) -- (0* \wid,3.25*\hei);
\end{tikzpicture}
$)$.
\end{center}
\item We have the identification \begin{center}
$\faktor{C_*[2,0]^{\mathrm{in}}}{F^1(C_*[2,0]^{\mathrm{in}})}$ 
\quad
$=$
\quad
$C_*($
\begin{tikzpicture}[x=1.5cm,y=-.5cm,baseline=-0.7cm]

\def\wid{\standardWidth}
\def\hei{\standardHeight}
\def\nodesize{3}
\def\ang{90}

\node[] (0h1) at (-0.5* \wid,1*\hei) {};
\node[] (0h2) at (-0.5* \wid,2*\hei) {};
\node[v, minimum size=\nodesize] (11) at (0* \wid,0*\hei) {};
\node[v, minimum size=\nodesize] (12) at (0* \wid,1*\hei) {};
\node[v, minimum size=\nodesize] (13) at (0* \wid,2*\hei) {};
\node[v, minimum size=\nodesize] (14) at (0* \wid,3*\hei) {};
\node[] (1h1) at (0.5* \wid,1*\hei) {};
\node[] (1h2) at (0.5* \wid,2*\hei) {};

\draw[e] (11) to[in =0, out =180] (0h1);
\draw[e] (14) to[in =0, out =180] (0h2);
\draw[e] (13) to[in =180, out =180] (12);

\draw[e] (12) to[in =0, out =0] (13);
\draw[e] (11) to[out = 0, in = 180] (1h1);
\draw[e] (14) to[out = 0, in = 180] (1h2);

\draw[very thick] (0* \wid,-0.25*\hei) -- (0* \wid,3.25*\hei);
\end{tikzpicture}
$)$. \qed
\end{center}
\end{itemize}
\end{lemma}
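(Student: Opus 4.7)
The plan is a case analysis based on the shape of the pivot, using the enumeration in \cref{lem:2noAlphabet}. First I would verify that the filtration is well-defined at all: the boundary can only close off the sides of the pivot (by deleting a neighboring bar, which glues that bar's letter into the pivot, reducing the side's connection count) and cannot open new sides, so $F^0$ and $F^1$ are genuine subcomplexes of $C_*[2,0]^{\mathrm{in}}$.

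Next I would establish the structural constraint on nested graffiti. Since one loop is nested inside the other and both meet the unique pivot (using two of its four nodes each), the inner loop must be confined to the pivot bar: any portion reaching an adjacent bar would force that bar to pin both loops (since the outer loop must enclose the inner), contradicting uniqueness of the pivot. Therefore the inner loop occupies two adjacent nodes of the pivot and consists of two matching arcs, one on each side of the pivot; the outer loop occupies the remaining two nodes.

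With this in hand, the three parts fall out by inspecting which of the thirteen pivot letters in \cref{lem:2noAlphabet} support this nested configuration. For part~(1), a pivot closed on both sides forces the entire word to consist of just the pivot (any further letter would either introduce a divider or a further loop), and the only such pivot compatible with nesting is the one shown. For part~(2), if the pivot is open on exactly one side, the outer loop extends out only on that side, and the nesting/closure condition on the other side uniquely pins the pivot to the letter displayed; splitting according to \emph{which} side is open gives the direct sum of the two $C_*(p)$ summands. For part~(3), openness on both sides similarly forces the pivot to be the unique letter with nested outer-inner structure on both sides.

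The main obstacle I expect is the bookkeeping for part~(2): namely, checking that the direct sum really splits (no boundary carries a graffito with left-open pivot to one with right-open pivot, or vice versa) and that fixing the pivot shape reduces the rest of the data to an element of $C_*(p)$ in the sense defined before the lemma. Both follow because bar deletions act locally and cannot change the side on which the pivot is open, only potentially close it further, which sends that summand into $F^0$.
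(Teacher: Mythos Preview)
Your proposal is correct and follows essentially the same route as the paper. The paper's argument is extremely terse: it simply observes that, because letters carry at most two connections per side, the inner loop in a nested configuration must have length~2 and live entirely on the pivot, and then reads off the relevant pivots from \cref{lem:2noAlphabet}. Your write-up expands exactly this reasoning, adding the check that the filtration is well-defined and that the two half-open summands in part~(2) genuinely split; both are implicit in the paper but worth stating.
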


\begin{proposition}
    The homology of $C_*[2,0]^{\mathrm{in}}$ vanishes.
\end{proposition}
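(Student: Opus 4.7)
My plan is to use the three-step filtration $F^0 \subset F^1 \subset F^2 = C_*[2,0]^{\mathrm{in}}$ from \cref{lem:2nestedQuots} and compute the two connecting homomorphisms in the long exact sequences of the short exact sequences $0 \to F^0 \to F^1 \to F^1/F^0 \to 0$ and $0 \to F^1 \to F^2 \to F^2/F^1 \to 0$. By \cref{lem:2nestedQuots} combined with \cref{cor:Cphomology}, the homology of each filtration quotient is concentrated in a single degree: $H_*(F^0) = R$ in degree $1$ generated by the concentric loops graffito $c$; $H_*(F^1/F^0) = R^2$ in degree $2$ generated by $[m \cdot p_L]$ and $[p_R \cdot m']$ for any valid choice of letters $m, m'$; and $H_*(F^2/F^1) = R$ in degree $3$ generated by $[m \cdot p_{LR} \cdot m']$. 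It will suffice to show both connecting homomorphisms are as nondegenerate as possible.

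For the first connecting homomorphism $\delta_1 \colon H_2(F^1/F^0) \to H_1(F^0)$, I will lift $m \cdot p_L$ to $F^1$ and compute its boundary $d = -d_1 + d_2$ in $C_*[2,0]^{\mathrm{in}}$. The face map $d_2$ deletes the pivot bar and vanishes by \cref{lem:deletePivotGet0}. The face map $d_1$ deletes $m$'s bar, which closes the two left-hangings of $p_L$ into the single arc determined by the (necessarily unique, planar) internal connection through $m$, producing the concentric loops $c$. Hence $\delta_1([m \cdot p_L]) = -[c]$, and by the symmetric argument $\delta_1([p_R \cdot m']) = +[c]$. This makes $\delta_1$ surjective with kernel of rank one generated by $[m \cdot p_L] + [p_R \cdot m']$, so the first long exact sequence yields $H_1(F^1) = 0$ and $H_2(F^1) \cong R$.

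For the second connecting homomorphism $\delta_2 \colon H_3(F^2/F^1) \to H_2(F^1)$, I will compute $d(m \cdot p_{LR} \cdot m') = -d_1 + d_2 - d_3$. The middle face map $d_2$ deletes the pivot and is zero. The outer face maps close off the corresponding sides of $p_{LR}$: $d_1$ produces the unique right-open nested pivot $p_R$ on the left bar, yielding $p_R \cdot m'$, and $d_3$ symmetrically produces the unique left-open nested pivot $p_L$ on the right bar, yielding $m \cdot p_L$. Thus the boundary equals $-(p_R \cdot m') - (m \cdot p_L)$, whose homology class in $H_2(F^1) \cong \ker(\delta_1)$ is, up to sign, the chosen generator $[m \cdot p_L] + [p_R \cdot m']$. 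Hence $\delta_2$ is an isomorphism, and the second long exact sequence gives $H_*(C_*[2,0]^{\mathrm{in}}) = 0$, as required.

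The main step that needs care is the geometric claim that each outer face map produces exactly the claimed graffito, independently of the auxiliary letters $m, m'$. This reduces to two observations: first, the two hangings of any admissible single-letter element of $C_*^R[1,0]$ or $C_*^L[1,0]$ are joined to one another by a path through the letter (since the letter carries no other hanging strands), so the closure just adds the corresponding arc; and second, among the letters of \cref{lem:2noAlphabet} there is exactly one left-open, one right-open, and one both-open \emph{nested} pivot, so once we are in the summand $C_*[2,0]^{\mathrm{in}}$ the specific closures are forced to be $c$, $p_R$, and $p_L$ respectively.
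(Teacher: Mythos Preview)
Your proof is correct and takes essentially the same approach as the paper: both use the three-step pivot-closure filtration and verify that the induced differentials kill everything. The paper phrases this as the vanishing of the $E^2$-page of the associated spectral sequence, while you unwind the same information via the two iterated long exact sequences; your explicit connecting-map computations are exactly the $d^1$-differentials the paper leaves to ``direct checking''.
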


\begin{proof}
Consider the spectral sequence associated to the filtration $F^p(C_*[2,0]^{\mathrm{in}})$, which converges to the homology of $C_*[2,0]^{\mathrm{in}}$. \cref{lem:2nestedQuots} and \cref{cor:Cphomology} tell us that the $E^1$-page $$E^1_{p,q} = H_{p+q}(\faktor{F^p(C_*[2,0]^{\mathrm{in}})}{F^{p-1}(C_*[2,0]^{\mathrm{in}})})$$ is concentrated in the $q=1$ row, in the entries $(0,1)$, $(1,1)$, and $(2,1)$, and looks as in \cref{fig:2nested} (a choice of generator of each copy of $R$ is indicated).

  \begin{figure}[h]
		\begin{tikzpicture}[scale=0.7]
            \def\colsp{3.8}
			\draw[line width=0.2cm, gray!20, <->] (-0.5*\colsp,5)--node[black, above, pos=0] {$q$}(-0.5*\colsp,-1)--(4*\colsp,-1) node[black, right] {$p$};
			\draw (-0.5*\colsp,0) node {$0$};	
			\draw (-0.5*\colsp,1) node {$1$};
			\draw (-0.5*\colsp,2) node {$2$};
            \draw (-0.5*\colsp,3) node {$3$};
            \draw (0,-1) node {$0$};
			\draw (1*\colsp,-1) node {$1$};	
            \draw (2*\colsp,-1) node {$2$};
			\draw (3*\colsp,-1) node {$3$};

            \draw (1*\colsp,1) node {$\oplus$};

            \foreach \x in {0,1,2,3} \foreach \y in {0,2,3} \draw (\x*\colsp,\y) node[black!80,scale=1.2] {$0$};
            \draw (3*\colsp,1) node[black!80,scale=1.2] {$0$};

			\foreach \x in {(3.5*\colsp,-1), (3.5*\colsp,0), (3.5*\colsp,1), (3.5*\colsp,2),(3.5*\colsp,3) }
			\draw \x node {$\cdots$};	
            \foreach \x in {(-0.5*\colsp,4), (0*\colsp,4), (1*\colsp,4), (2*\colsp,4),(3*\colsp,4) }
			\draw \x node {$\vdots$};	

            \foreach \x in {(0.3*\colsp,1), (1.6*\colsp,1) }
			\draw \x node {$\xleftarrow{}$};	

        \begin{scope}[xshift=0, yshift = 0.7cm, scale = 0.3] 
            \def\wid{\standardWidth}
            \def\hei{\standardHeight}
            \def\nodesize{3}
            \def\ang{90}

            \node[v, minimum size=\nodesize] (11) at (0* \wid,0*\hei) {};
            \node[v, minimum size=\nodesize] (12) at (0* \wid,1*\hei) {};
            \node[v, minimum size=\nodesize] (13) at (0* \wid,2*\hei) {};
            \node[v, minimum size=\nodesize] (14) at (0* \wid,3*\hei) {};

            \draw[e] (11) to[in =180, out =180] (14);
            \draw[e] (13) to[in =180, out =180] (12);

            \draw[e] (12) to[in =0, out =0] (13);
            \draw[e] (11) to[out = 0, in = 0] (14);

            \draw[thick] (0* \wid,-0.25*\hei) -- (0* \wid,3.25*\hei);
        \end{scope}

        \begin{scope}[xshift=0.8*\colsp cm, yshift = 0.7cm, scale = 0.3] 
            \def\wid{\standardWidth}
            \def\hei{\standardHeight}
            \def\nodesize{3}
            \def\ang{90}

            \node[v, minimum size=\nodesize] (01) at (-2* \wid,0*\hei) {};
            \node[v, minimum size=\nodesize] (02) at (-2* \wid,1*\hei) {};
            \node[v, minimum size=\nodesize] (03) at (-2* \wid,2*\hei) {};
            \node[v, minimum size=\nodesize] (04) at (-2* \wid,3*\hei) {};

            \node[v, minimum size=\nodesize] (11) at (0* \wid,0*\hei) {};
            \node[v, minimum size=\nodesize] (12) at (0* \wid,1*\hei) {};
            \node[v, minimum size=\nodesize] (13) at (0* \wid,2*\hei) {};
            \node[v, minimum size=\nodesize] (14) at (0* \wid,3*\hei) {};

            \draw[e] (11) to[in =0, out =0] (14);
            \draw[e] (12) to[in =0, out =0] (13);

            \draw[e] (12) to[in =180, out =180] (13);
            \draw[e] (11) to[out = 180, in = 0] (01);
            \draw[e] (14) to[out = 180, in = 0] (02);

            \draw[e] (03) to[in =0, out =0] (04);
            \draw[e] (01) to[in =180, out =180] (04);
            \draw[e] (02) to[in =180, out =180] (03);

            \draw[thick] (0* \wid,-0.25*\hei) -- (0* \wid,3.25*\hei);
            \draw[thick] (-2* \wid,-0.25*\hei) -- (-2* \wid,3.25*\hei);
        \end{scope}

        \begin{scope}[xshift=1.35*\colsp cm, yshift = 0.7cm, scale = 0.3] 
            \def\wid{\standardWidth}
            \def\hei{\standardHeight}
            \def\nodesize{3}
            \def\ang{90}

            \node[v, minimum size=\nodesize] (01) at (-2* \wid,0*\hei) {};
            \node[v, minimum size=\nodesize] (02) at (-2* \wid,1*\hei) {};
            \node[v, minimum size=\nodesize] (03) at (-2* \wid,2*\hei) {};
            \node[v, minimum size=\nodesize] (04) at (-2* \wid,3*\hei) {};

            \node[v, minimum size=\nodesize] (11) at (0* \wid,0*\hei) {};
            \node[v, minimum size=\nodesize] (12) at (0* \wid,1*\hei) {};
            \node[v, minimum size=\nodesize] (13) at (0* \wid,2*\hei) {};
            \node[v, minimum size=\nodesize] (14) at (0* \wid,3*\hei) {};

            \draw[e] (01) to[in =180, out =180] (04);
            \draw[e] (02) to[in =180, out =180] (03);

            \draw[e] (02) to[in =0, out =0] (03);
            \draw[e] (01) to[out = 0, in = 180] (11);
            \draw[e] (04) to[out = 0, in = 180] (12);

            \draw[e] (13) to[in =180, out =180] (14);
            \draw[e] (11) to[in =0, out =0] (14);
            \draw[e] (12) to[in =0, out =0] (13);

            \draw[thick] (0* \wid,-0.25*\hei) -- (0* \wid,3.25*\hei);
            \draw[thick] (-2* \wid,-0.25*\hei) -- (-2* \wid,3.25*\hei);
        \end{scope}

        \begin{scope}[xshift=2*\colsp cm, yshift = 0.7cm, scale = 0.3] 
            \def\wid{\standardWidth}
            \def\hei{\standardHeight}
            \def\nodesize{3}
            \def\ang{90}

            \node[v, minimum size=\nodesize] (01) at (-2* \wid,0*\hei) {};
            \node[v, minimum size=\nodesize] (02) at (-2* \wid,1*\hei) {};
            \node[v, minimum size=\nodesize] (03) at (-2* \wid,2*\hei) {};
            \node[v, minimum size=\nodesize] (04) at (-2* \wid,3*\hei) {};

            \node[v, minimum size=\nodesize] (11) at (0* \wid,0*\hei) {};
            \node[v, minimum size=\nodesize] (12) at (0* \wid,1*\hei) {};
            \node[v, minimum size=\nodesize] (13) at (0* \wid,2*\hei) {};
            \node[v, minimum size=\nodesize] (14) at (0* \wid,3*\hei) {};

            \node[v, minimum size=\nodesize] (21) at (2* \wid,0*\hei) {};
            \node[v, minimum size=\nodesize] (22) at (2* \wid,1*\hei) {};
            \node[v, minimum size=\nodesize] (23) at (2* \wid,2*\hei) {};
            \node[v, minimum size=\nodesize] (24) at (2* \wid,3*\hei) {};

            \draw[e] (12) to[in =0, out =0] (13);

            \draw[e] (12) to[in =180, out =180] (13);
            \draw[e] (11) to[out = 180, in = 0] (01);
            \draw[e] (14) to[out = 180, in = 0] (02);

            \draw[e] (03) to[in =0, out =0] (04);
            \draw[e] (01) to[in =180, out =180] (04);
            \draw[e] (02) to[in =180, out =180] (03);

            \draw[e] (11) to[out = 0, in = 180] (21);
            \draw[e] (14) to[out = 0, in = 180] (22);

            \draw[e] (23) to[in =180, out =180] (24);
            \draw[e] (21) to[in =0, out =0] (24);
            \draw[e] (22) to[in =0, out =0] (23);

            \draw[thick] (0* \wid,-0.25*\hei) -- (0* \wid,3.25*\hei);
            \draw[thick] (-2* \wid,-0.25*\hei) -- (-2* \wid,3.25*\hei);
            \draw[thick] (2* \wid,-0.25*\hei) -- (2* \wid,3.25*\hei);
        \end{scope}
			
		\end{tikzpicture}
\caption{The page $E^1_{p,q}$ of the spectral sequence computing the homology of $C_*[2,0]^{\mathrm{in}}$.}
\label{fig:2nested}
\end{figure}
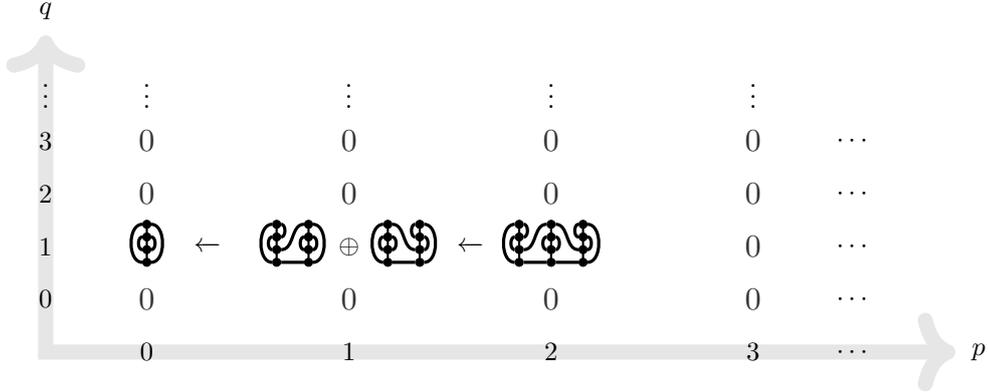
We may now check directly that these entries cancel under the $d^1$-differential, so the $E^2$-page vanishes, and the result follows.
\end{proof}

\subsubsection{The unnested summand}

As before, we will filter by `pivot closure'. For $i=0,1,2$, define $F^i(C_*[2,0]^{\textrm{out}})$ as for $C_*[2,0]^{\textrm{in}}$. This again gives a three-step filtration of $C_*[2,0]^{\textrm{out}}$, and we will again consider the resulting spectral sequence. As before, consulting the table of possible pivots (\cref{lem:2noAlphabet}) gives the following.

\begin{lemma} \label{lem:2unnestedQuots} The filtration quotients are described as follows.
\begin{itemize}
    \item $F^0(C_*[2,0]^{\mathrm{out}})$ consists only of the graffito \begin{center}
\begin{tikzpicture}[x=1.5cm,y=-.5cm,baseline=-1.05cm]
\def\wid{\standardWidth}
\def\hei{\standardHeight}
\def\nodesize{3}
\def\ang{90}

\node[v, minimum size=\nodesize] (11) at (0* \wid,0*\hei) {};
\node[v, minimum size=\nodesize] (12) at (0* \wid,1*\hei) {};
\node[v, minimum size=\nodesize] (13) at (0* \wid,2*\hei) {};
\node[v, minimum size=\nodesize] (14) at (0* \wid,3*\hei) {};

\draw[e] (11) to[in =180, out =180] (12);
\draw[e] (13) to[in =180, out =180] (14);

\draw[e] (11) to[in =0, out =0] (12);
\draw[e] (13) to[out = 0, in = 0] (14);

\draw[very thick] (0* \wid,-0.25*\hei) -- (0* \wid,3.25*\hei);
\end{tikzpicture},
\end{center} in degree 1.
\item We have a direct sum decomposition \begin{center}
$\faktor{F^1(C_*[2,0]^{\mathrm{out}})}{F^0(C_*[2,0]^{\mathrm{out}})}$ 
\quad
$=$
\end{center}

\begin{center}
$= C_*($
\begin{tikzpicture}[x=1.5cm,y=-.5cm,baseline=-0.7cm]

\def\wid{\standardWidth}
\def\hei{\standardHeight}
\def\nodesize{3}
\def\ang{90}
\node[v, minimum size=\nodesize] (11) at (0* \wid,0*\hei) {};
\node[v, minimum size=\nodesize] (12) at (0* \wid,1*\hei) {};
\node[v, minimum size=\nodesize] (13) at (0* \wid,2*\hei) {};
\node[v, minimum size=\nodesize] (14) at (0* \wid,3*\hei) {};
\node[] (1h1) at (0.5* \wid,1*\hei) {};
\node[] (1h2) at (0.5* \wid,2*\hei) {};

\draw[e] (11) to[in =180, out =180] (12);
\draw[e] (13) to[in =180, out =180] (14);

\draw[e] (13) to[in =0, out =0] (14);
\draw[e] (11) to[out = 0, in = 180] (1h1);
\draw[e] (12) to[out = 0, in = 180] (1h2);

\draw[very thick] (0* \wid,-0.25*\hei) -- (0* \wid,3.25*\hei);
\end{tikzpicture}
$)$
$\oplus$
$C_*($
\begin{tikzpicture}[x=1.5cm,y=-.5cm,baseline=-0.7cm]

\def\wid{\standardWidth}
\def\hei{\standardHeight}
\def\nodesize{3}
\def\ang{90}
\node[v, minimum size=\nodesize] (11) at (0* \wid,0*\hei) {};
\node[v, minimum size=\nodesize] (12) at (0* \wid,1*\hei) {};
\node[v, minimum size=\nodesize] (13) at (0* \wid,2*\hei) {};
\node[v, minimum size=\nodesize] (14) at (0* \wid,3*\hei) {};
\node[] (1h1) at (0.5* \wid,1*\hei) {};
\node[] (1h2) at (0.5* \wid,2*\hei) {};

\draw[e] (12) to[in =180, out =180] (11);
\draw[e] (14) to[in =180, out =180] (13);

\draw[e] (12) to[in =0, out =0] (11);
\draw[e] (14) to[out = 0, in = 180] (1h2);
\draw[e] (13) to[out = 0, in = 180] (1h1);

\draw[very thick] (0* \wid,-0.25*\hei) -- (0* \wid,3.25*\hei);
\end{tikzpicture}
$)$
$\oplus$
$C_*($
\begin{tikzpicture}[x=1.5cm,y=-.5cm,baseline=-0.7cm]

\def\wid{\standardWidth}
\def\hei{\standardHeight}
\def\nodesize{3}
\def\ang{90}

\node[] (0h1) at (-0.5* \wid,1*\hei) {};
\node[] (0h2) at (-0.5* \wid,2*\hei) {};
\node[v, minimum size=\nodesize] (11) at (0* \wid,0*\hei) {};
\node[v, minimum size=\nodesize] (12) at (0* \wid,1*\hei) {};
\node[v, minimum size=\nodesize] (13) at (0* \wid,2*\hei) {};
\node[v, minimum size=\nodesize] (14) at (0* \wid,3*\hei) {};

\draw[e] (11) to[in =0, out =0] (12);
\draw[e] (13) to[in =0, out =0] (14);

\draw[e] (13) to[in =180, out =180] (14);
\draw[e] (11) to[out = 180, in = 0] (0h1);
\draw[e] (12) to[out = 180, in = 0] (0h2);

\draw[very thick] (0* \wid,-0.25*\hei) -- (0* \wid,3.25*\hei);
\end{tikzpicture}
$)$
$\oplus$
$C_*($
\begin{tikzpicture}[x=1.5cm,y=-.5cm,baseline=-0.7cm]

\def\wid{\standardWidth}
\def\hei{\standardHeight}
\def\nodesize{3}
\def\ang{90}

\node[] (0h1) at (-0.5* \wid,1*\hei) {};
\node[] (0h2) at (-0.5* \wid,2*\hei) {};
\node[v, minimum size=\nodesize] (11) at (0* \wid,0*\hei) {};
\node[v, minimum size=\nodesize] (12) at (0* \wid,1*\hei) {};
\node[v, minimum size=\nodesize] (13) at (0* \wid,2*\hei) {};
\node[v, minimum size=\nodesize] (14) at (0* \wid,3*\hei) {};

\draw[e] (14) to[in =0, out =0] (13);
\draw[e] (12) to[in =0, out =0] (11);

\draw[e] (12) to[in =180, out =180] (11);
\draw[e] (14) to[out = 180, in = 0] (0h2);
\draw[e] (13) to[out = 180, in = 0] (0h1);

\draw[very thick] (0* \wid,-0.25*\hei) -- (0* \wid,3.25*\hei);
\end{tikzpicture}
$)$.
\end{center}
\item We have a direct sum decomposition \begin{center}
$\faktor{C_*[2,0]^{\mathrm{out}}}{F^1(C_*[2,0]^{\mathrm{out}})}$ 
\quad
$= C_*($
\begin{tikzpicture}[x=1.5cm,y=-.5cm,baseline=-0.7cm]

\def\wid{\standardWidth}
\def\hei{\standardHeight}
\def\nodesize{3}
\def\ang{90}

\node[] (0h1) at (-0.5* \wid,1*\hei) {};
\node[] (0h2) at (-0.5* \wid,2*\hei) {};
\node[v, minimum size=\nodesize] (11) at (0* \wid,0*\hei) {};
\node[v, minimum size=\nodesize] (12) at (0* \wid,1*\hei) {};
\node[v, minimum size=\nodesize] (13) at (0* \wid,2*\hei) {};
\node[v, minimum size=\nodesize] (14) at (0* \wid,3*\hei) {};
\node[] (1h1) at (0.5* \wid,1*\hei) {};
\node[] (1h2) at (0.5* \wid,2*\hei) {};

\draw[e] (11) to[in =0, out =180] (0h1);
\draw[e] (12) to[in =0, out =180] (0h2);
\draw[e] (13) to[in =180, out =180] (14);

\draw[e] (13) to[in =0, out =0] (14);
\draw[e] (11) to[out = 0, in = 180] (1h1);
\draw[e] (12) to[out = 0, in = 180] (1h2);

\draw[very thick] (0* \wid,-0.25*\hei) -- (0* \wid,3.25*\hei);
\end{tikzpicture}
$)$
$\oplus$
$C_*($
\begin{tikzpicture}[x=1.5cm,y=-.5cm,baseline=-0.7cm]

\def\wid{\standardWidth}
\def\hei{\standardHeight}
\def\nodesize{3}
\def\ang{90}

\node[] (0h1) at (-0.5* \wid,1*\hei) {};
\node[] (0h2) at (-0.5* \wid,2*\hei) {};
\node[v, minimum size=\nodesize] (11) at (0* \wid,0*\hei) {};
\node[v, minimum size=\nodesize] (12) at (0* \wid,1*\hei) {};
\node[v, minimum size=\nodesize] (13) at (0* \wid,2*\hei) {};
\node[v, minimum size=\nodesize] (14) at (0* \wid,3*\hei) {};
\node[] (1h1) at (0.5* \wid,1*\hei) {};
\node[] (1h2) at (0.5* \wid,2*\hei) {};

\draw[e] (12) to[in =180, out =180] (11);
\draw[e] (13) to[in =0, out =180] (0h1);
\draw[e] (14) to[in =0, out =180] (0h2);

\draw[e] (12) to[in =0, out =0] (11);
\draw[e] (14) to[out = 0, in = 180] (1h2);
\draw[e] (13) to[out = 0, in = 180] (1h1);

\draw[very thick] (0* \wid,-0.25*\hei) -- (0* \wid,3.25*\hei);
\end{tikzpicture}
$)$
\end{center}

\begin{center}
$\oplus$
$C_*($
\begin{tikzpicture}[x=1.5cm,y=-.5cm,baseline=-0.7cm]

\def\wid{\standardWidth}
\def\hei{\standardHeight}
\def\nodesize{3}
\def\ang{90}

\node[] (0h1) at (-0.5* \wid,1*\hei) {};
\node[] (0h2) at (-0.5* \wid,2*\hei) {};
\node[v, minimum size=\nodesize] (11) at (0* \wid,0*\hei) {};
\node[v, minimum size=\nodesize] (12) at (0* \wid,1*\hei) {};
\node[v, minimum size=\nodesize] (13) at (0* \wid,2*\hei) {};
\node[v, minimum size=\nodesize] (14) at (0* \wid,3*\hei) {};
\node[] (1h1) at (0.5* \wid,1*\hei) {};
\node[] (1h2) at (0.5* \wid,2*\hei) {};

\draw[e] (13) to[in =0, out =180] (0h1);
\draw[e] (14) to[in =0, out =180] (0h2);
\draw[e] (11) to[in =180, out =180] (12);

\draw[e] (13) to[in =0, out =0] (14);
\draw[e] (11) to[out = 0, in = 180] (1h1);
\draw[e] (12) to[out = 0, in = 180] (1h2);

\draw[very thick] (0* \wid,-0.25*\hei) -- (0* \wid,3.25*\hei);
\end{tikzpicture}
$)$
$\oplus$
$C_*($
\begin{tikzpicture}[x=1.5cm,y=-.5cm,baseline=-0.7cm]

\def\wid{\standardWidth}
\def\hei{\standardHeight}
\def\nodesize{3}
\def\ang{90}

\node[] (0h1) at (-0.5* \wid,1*\hei) {};
\node[] (0h2) at (-0.5* \wid,2*\hei) {};
\node[v, minimum size=\nodesize] (11) at (0* \wid,0*\hei) {};
\node[v, minimum size=\nodesize] (12) at (0* \wid,1*\hei) {};
\node[v, minimum size=\nodesize] (13) at (0* \wid,2*\hei) {};
\node[v, minimum size=\nodesize] (14) at (0* \wid,3*\hei) {};
\node[] (1h1) at (0.5* \wid,1*\hei) {};
\node[] (1h2) at (0.5* \wid,2*\hei) {};

\draw[e] (13) to[in =180, out =180] (14);
\draw[e] (11) to[in =0, out =180] (0h1);
\draw[e] (12) to[in =0, out =180] (0h2);

\draw[e] (12) to[in =0, out =0] (11);
\draw[e] (14) to[out = 0, in = 180] (1h2);
\draw[e] (13) to[out = 0, in = 180] (1h1);

\draw[very thick] (0* \wid,-0.25*\hei) -- (0* \wid,3.25*\hei);
\end{tikzpicture}
$)$. \qed
\end{center}
\end{itemize}
\end{lemma}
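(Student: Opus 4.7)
The plan is to mirror the proof of \cref{lem:2nestedQuots}: a direct case-by-case enumeration using the pivot classification of \cref{lem:2noAlphabet}, together with the uniqueness of the pivot and the triviality of the face map at the pivot (\cref{lem:deletePivotGet0}). The key point to leverage is that the pivot of any graffito in $C_*[2,0]$ is unique, and that whether the graffito lies in $C_*[2,0]^{\mathrm{in}}$ or $C_*[2,0]^{\mathrm{out}}$ is determined entirely by the pivot together with the rest of the word around it.

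First, I would partition the twelve letters of \cref{lem:2noAlphabet} according to two pieces of data: (a) how many connections each side of the pivot carries (which fixes the filtration level via the definition of $F^0$, $F^1$, $F^2$), and (b) whether the two loops of the graffito end up nested or unnested in any word built around that pivot. For (b), the crucial observation is that if the pivot is open on a given side, the pair of hanging strands emerging from the pivot on that side must close up (via the surrounding word) into a single component of one of the two loops; this closure cannot change which loop is the outer and which is the inner, so nesting is already determined at the level of the pivot itself. In particular, the alphabet splits cleanly into "nested" and "unnested" pivots.

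Next, I would inspect the letters. In the closed-on-both-sides row, the first letter contains a small loop placed strictly inside another, which is nested, whereas the second letter has two small stacked caps, which is unnested; this identifies the single generator of $F^0(C_*[2,0]^{\mathrm{out}})$. For the pivots open on exactly one side, I would check which of the letters in the corresponding rows of \cref{lem:2noAlphabet} are not already captured by $C_*[2,0]^{\mathrm{in}}$, arriving at the four summands in the second bullet, and I would similarly extract the four pivots open on both sides appearing in the third bullet.

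Finally, by the uniqueness of the pivot each level of the filtration splits as an $R$-module direct sum indexed by pivot type, and \cref{lem:deletePivotGet0} together with the fact that any other face map acts only on the surrounding word guarantees that this is actually a direct sum of subquotient chain complexes isomorphic to the asserted $C_*(p)$'s. The only subtle step is (b) — making sure the nested/unnested dichotomy is determined by the pivot alone — but this reduces to checking, for each one-sided or two-sided open pivot, that its hanging strands already determine the enclosing topology; since the remaining half-open graffiti must close in the unique planar way, there is no ambiguity and the argument reduces to a routine finite check.
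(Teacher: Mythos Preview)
Your proposal is correct and follows essentially the same approach as the paper: the paper's proof is the one-line ``As before, consulting the table of possible pivots (\cref{lem:2noAlphabet}) gives the following,'' and your write-up simply unpacks this direct case check, correctly noting that the nested/unnested dichotomy is already determined at the level of the pivot letter. Your explicit justification of point (b) is a reasonable addition, but is not strictly needed beyond the finite inspection.
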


We have now reduced \cref{thm:mainTechnical} (\ref{part:2}), that is, calculating the homology of $C_*[2,0]$, to calculating the homology of $C_*[2,0]^{\mathrm{out}}$. We do this now.

\begin{proposition}
    The homology of $C_*[2,0]^{\mathrm{out}}$ consists of a single copy of $R$ in degree 3, generated by the class 
    \begin{center}
    \begin{tikzpicture}[x=1.5cm,y=-.5cm,baseline=-0.7cm]
        \def\wid{\standardWidth}
            \def\hei{\standardHeight}
            \def\nodesize{3}

            \node[v, minimum size=\nodesize] (01) at (-1* \wid,0*\hei) {};
            \node[v, minimum size=\nodesize] (02) at (-1* \wid,1*\hei) {};
            \node[v, minimum size=\nodesize] (03) at (-1* \wid,2*\hei) {};
            \node[v, minimum size=\nodesize] (04) at (-1* \wid,3*\hei) {};

            \node[v, minimum size=\nodesize] (11) at (0* \wid,0*\hei) {};
            \node[v, minimum size=\nodesize] (12) at (0* \wid,1*\hei) {};
            \node[v, minimum size=\nodesize] (13) at (0* \wid,2*\hei) {};
            \node[v, minimum size=\nodesize] (14) at (0* \wid,3*\hei) {};

            \node[v, minimum size=\nodesize] (21) at (1* \wid,0*\hei) {};
            \node[v, minimum size=\nodesize] (22) at (1* \wid,1*\hei) {};
            \node[v, minimum size=\nodesize] (23) at (1* \wid,2*\hei) {};
            \node[v, minimum size=\nodesize] (24) at (1* \wid,3*\hei) {};

            \draw[e] (02) to[in =0, out =0] (03);
            \draw[e] (01) to[in =180, out =180] (02);
            \draw[e] (03) to[in =180, out =180] (04);
            \draw[e] (11) to[in =180, out =180] (12);
            \draw[e] (13) to[out = 180, in = 0] (01);
            \draw[e] (14) to[out = 180, in = 0] (04);

            \draw[e] (11) to[in =0, out =0] (12);
            \draw[e] (23) to[in =180, out =180] (22);
            \draw[e] (21) to[in =0, out =0] (22);
            \draw[e] (23) to[in =0, out =0] (24);
            \draw[e] (13) to[out = 0, in = 180] (21);
            \draw[e] (14) to[out = 0, in = 180] (24);

            \draw[very thick] (0* \wid,-0.25*\hei) -- (0* \wid,3.25*\hei);
            \draw[very thick] (-1* \wid,-0.25*\hei) -- (-1* \wid,3.25*\hei);
            \draw[very thick] (1* \wid,-0.25*\hei) -- (1* \wid,3.25*\hei);
    \end{tikzpicture}
    $+$ 
    \begin{tikzpicture}[x=1.5cm,y=-.5cm,baseline=-0.7cm]
       \def\wid{\standardWidth}
            \def\hei{\standardHeight}
            \def\nodesize{3}

            \node[v, minimum size=\nodesize] (01) at (-1* \wid,0*\hei) {};
            \node[v, minimum size=\nodesize] (02) at (-1* \wid,1*\hei) {};
            \node[v, minimum size=\nodesize] (03) at (-1* \wid,2*\hei) {};
            \node[v, minimum size=\nodesize] (04) at (-1* \wid,3*\hei) {};

            \node[v, minimum size=\nodesize] (11) at (0* \wid,0*\hei) {};
            \node[v, minimum size=\nodesize] (12) at (0* \wid,1*\hei) {};
            \node[v, minimum size=\nodesize] (13) at (0* \wid,2*\hei) {};
            \node[v, minimum size=\nodesize] (14) at (0* \wid,3*\hei) {};

            \node[v, minimum size=\nodesize] (21) at (1* \wid,0*\hei) {};
            \node[v, minimum size=\nodesize] (22) at (1* \wid,1*\hei) {};
            \node[v, minimum size=\nodesize] (23) at (1* \wid,2*\hei) {};
            \node[v, minimum size=\nodesize] (24) at (1* \wid,3*\hei) {};

            \draw[e] (02) to[in =0, out =0] (03);
            \draw[e] (01) to[in =180, out =180] (02);
            \draw[e] (03) to[in =180, out =180] (04);
            \draw[e] (13) to[in =180, out =180] (14);
            \draw[e] (11) to[out = 180, in = 0] (01);
            \draw[e] (12) to[out = 180, in = 0] (04);

            \draw[e] (13) to[in =0, out =0] (14);
            \draw[e] (23) to[in =180, out =180] (22);
            \draw[e] (21) to[in =0, out =0] (22);
            \draw[e] (23) to[in =0, out =0] (24);
            \draw[e] (11) to[out = 0, in = 180] (21);
            \draw[e] (12) to[out = 0, in = 180] (24);

            \draw[very thick] (0* \wid,-0.25*\hei) -- (0* \wid,3.25*\hei);
            \draw[very thick] (-1* \wid,-0.25*\hei) -- (-1* \wid,3.25*\hei);
            \draw[very thick] (1* \wid,-0.25*\hei) -- (1* \wid,3.25*\hei); 
    \end{tikzpicture}
    \\
    $-$
    \begin{tikzpicture}[x=1.5cm,y=-.5cm,baseline=-0.7cm]
       \def\wid{\standardWidth}
            \def\hei{\standardHeight}
            \def\nodesize{3}

            \node[v, minimum size=\nodesize] (01) at (-1* \wid,0*\hei) {};
            \node[v, minimum size=\nodesize] (02) at (-1* \wid,1*\hei) {};
            \node[v, minimum size=\nodesize] (03) at (-1* \wid,2*\hei) {};
            \node[v, minimum size=\nodesize] (04) at (-1* \wid,3*\hei) {};

            \node[v, minimum size=\nodesize] (11) at (0* \wid,0*\hei) {};
            \node[v, minimum size=\nodesize] (12) at (0* \wid,1*\hei) {};
            \node[v, minimum size=\nodesize] (13) at (0* \wid,2*\hei) {};
            \node[v, minimum size=\nodesize] (14) at (0* \wid,3*\hei) {};

            \node[v, minimum size=\nodesize] (21) at (1* \wid,0*\hei) {};
            \node[v, minimum size=\nodesize] (22) at (1* \wid,1*\hei) {};
            \node[v, minimum size=\nodesize] (23) at (1* \wid,2*\hei) {};
            \node[v, minimum size=\nodesize] (24) at (1* \wid,3*\hei) {};

            \draw[e] (02) to[in =0, out =0] (03);
            \draw[e] (01) to[in =180, out =180] (02);
            \draw[e] (03) to[in =180, out =180] (04);
            \draw[e] (11) to[in =180, out =180] (12);
            \draw[e] (13) to[out = 180, in = 0] (01);
            \draw[e] (14) to[out = 180, in = 0] (04);

            \draw[e] (13) to[in =0, out =0] (14);
            \draw[e] (23) to[in =180, out =180] (22);
            \draw[e] (21) to[in =0, out =0] (22);
            \draw[e] (23) to[in =0, out =0] (24);
            \draw[e] (11) to[out = 0, in = 180] (21);
            \draw[e] (12) to[out = 0, in = 180] (24);

            \draw[very thick] (0* \wid,-0.25*\hei) -- (0* \wid,3.25*\hei);
            \draw[very thick] (-1* \wid,-0.25*\hei) -- (-1* \wid,3.25*\hei);
            \draw[very thick] (1* \wid,-0.25*\hei) -- (1* \wid,3.25*\hei); 
    \end{tikzpicture}
    $-$
    \begin{tikzpicture}[x=1.5cm,y=-.5cm,baseline=-0.7cm]
       \def\wid{\standardWidth}
            \def\hei{\standardHeight}
            \def\nodesize{3}

            \node[v, minimum size=\nodesize] (01) at (-1* \wid,0*\hei) {};
            \node[v, minimum size=\nodesize] (02) at (-1* \wid,1*\hei) {};
            \node[v, minimum size=\nodesize] (03) at (-1* \wid,2*\hei) {};
            \node[v, minimum size=\nodesize] (04) at (-1* \wid,3*\hei) {};

            \node[v, minimum size=\nodesize] (11) at (0* \wid,0*\hei) {};
            \node[v, minimum size=\nodesize] (12) at (0* \wid,1*\hei) {};
            \node[v, minimum size=\nodesize] (13) at (0* \wid,2*\hei) {};
            \node[v, minimum size=\nodesize] (14) at (0* \wid,3*\hei) {};

            \node[v, minimum size=\nodesize] (21) at (1* \wid,0*\hei) {};
            \node[v, minimum size=\nodesize] (22) at (1* \wid,1*\hei) {};
            \node[v, minimum size=\nodesize] (23) at (1* \wid,2*\hei) {};
            \node[v, minimum size=\nodesize] (24) at (1* \wid,3*\hei) {};

            \draw[e] (02) to[in =0, out =0] (03);
            \draw[e] (01) to[in =180, out =180] (02);
            \draw[e] (03) to[in =180, out =180] (04);
            \draw[e] (13) to[in =180, out =180] (14);
            \draw[e] (11) to[out = 180, in = 0] (01);
            \draw[e] (12) to[out = 180, in = 0] (04);

            \draw[e] (11) to[in =0, out =0] (12);
            \draw[e] (23) to[in =180, out =180] (22);
            \draw[e] (21) to[in =0, out =0] (22);
            \draw[e] (23) to[in =0, out =0] (24);
            \draw[e] (13) to[out = 0, in = 180] (21);
            \draw[e] (14) to[out = 0, in = 180] (24);

            \draw[very thick] (0* \wid,-0.25*\hei) -- (0* \wid,3.25*\hei);
            \draw[very thick] (-1* \wid,-0.25*\hei) -- (-1* \wid,3.25*\hei);
            \draw[very thick] (1* \wid,-0.25*\hei) -- (1* \wid,3.25*\hei); 
    \end{tikzpicture}
    \end{center}
\end{proposition}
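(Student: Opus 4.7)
The plan is to mirror the argument used for $C_*[2,0]^{\mathrm{in}}$, applying the spectral sequence associated with the three-step filtration $F^\bullet(C_*[2,0]^{\mathrm{out}})$ and converging to the homology of $C_*[2,0]^{\mathrm{out}}$. Combining \cref{lem:2unnestedQuots} with \cref{cor:Cphomology} identifies the $E^1$-page as concentrated in the single row $q=1$: we get $E^1_{0,1} \cong R$ generated by the unique closed-closed pivot graffito (total degree $1$), $E^1_{1,1} \cong R^{\oplus 4}$ with one generator per singly-open pivot represented by a length-$2$ word (total degree $2$), and $E^1_{2,1} \cong R^{\oplus 4}$ with one generator per doubly-open pivot represented by a length-$3$ word (total degree $3$). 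For the conclusion of the proposition to hold, I need $d^1 : E^1_{1,1} \to E^1_{0,1}$ to be surjective with rank-$3$ kernel, and $d^1 : E^1_{2,1} \to E^1_{1,1}$ to land in and surject onto this kernel, leaving $E^2_{2,1} \cong R$.

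Next I would compute the two $d^1$-maps by direct bar-deletion on chosen representative words. \cref{lem:deletePivotGet0} kills the face map at the pivot, so a length-$p$ representative contributes only $p-1$ face maps. For a length-$2$ word representing a singly-open pivot, the single surviving face map either closes the open side of the pivot (mapping to the generator of $E^1_{0,1}$) or leaves it open (landing in $F^1/F^0$, which is zero in the quotient); checking the four cases in \cref{lem:2unnestedQuots} shows $d^1 : E^1_{1,1} \to E^1_{0,1}$ is surjective and computes the kernel. For a length-$3$ representative of a doubly-open pivot, each of the two surviving face maps either closes one side of the pivot (so the image lies in the subgroup of $E^1_{1,1}$ indexed by pivots obtained by closing the corresponding side) or leaves it doubly open (zero in the quotient), and a direct tabulation shows the resulting map hits precisely the rank-$3$ kernel identified above.

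Together these computations force $E^2_{0,1}=E^2_{1,1}=0$ and $E^2_{2,1}\cong R$, so the spectral sequence collapses (being concentrated in one row, there is no room for higher differentials) and $H_*(C_*[2,0]^{\mathrm{out}})$ is a single copy of $R$ in total degree $3$. To identify the generating class, I would take the four natural length-$3$ representatives of the doubly-open pivots (each obtained by inserting on either side of the pivot the unique letter which is open on the pivot side and closed on the outer side), unwind the identifications of \cref{lem:2unnestedQuots}, and read off the unique (up to scalar) kernel element of $d^1 : E^1_{2,1} \to E^1_{1,1}$. Lifted from the $E^2$-page back to $C_*[2,0]^{\mathrm{out}}$, this cycle is the claimed signed sum of the four three-bar graffiti.

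The main obstacle will be careful sign bookkeeping: combining the bar-deletion signs of the differential with the implicit signs from the identifications of \cref{lem:2unnestedQuots} must reproduce the stated $(+,+,-,-)$ pattern rather than some other sign combination. A subsidiary check is that the resulting $E^2$-representative is in fact already a cycle in $C_*[2,0]^{\mathrm{out}}$ (not merely on the associated graded); this will follow either from the fact that the face maps at the pivot vanish (so there is no filtration-strict correction) or by a short direct verification.
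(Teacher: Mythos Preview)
Your proposal is correct and follows essentially the same approach as the paper: run the spectral sequence for the three-step pivot-closure filtration, use \cref{lem:2unnestedQuots} and \cref{cor:Cphomology} to pin down the $E^1$-page as $R$, $R^{\oplus 4}$, $R^{\oplus 4}$ in the $q=1$ row, and then compute the two $d^1$-maps directly to find the unique surviving class. The paper's proof is terser (it simply asserts that ``direct computation shows the only class surviving the $d^1$-differential is the one given''), whereas you spell out more of what that computation involves, but the strategy is identical.
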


\begin{proof}
Consider the spectral sequence associated to the filtration $F^p(C_*[2,0]^{\mathrm{out}})$, which converges to the homology of $C_*[2,0]^{\mathrm{out}}$. \cref{lem:2unnestedQuots} and \cref{cor:Cphomology} tell us that the $E^1$-page $$E^1_{p,q} = H_{p+q}(\faktor{F^p(C_*[2,0]^{\mathrm{out}})}{F^{p-1}(C_*[2,0]^{\mathrm{out}})})$$ is concentrated in the $q=1$ row, in the entries $(0,1)$, $(1,1)$, and $(2,1)$, and looks as in \cref{fig:2unnested} (a choice of generator of each copy of $R$ is indicated).

  \begin{figure}[h]
		\begin{tikzpicture}[scale=0.7]
            \def\colsp{3.8}
            \def\extrarowsp{1}
			\draw[line width=0.2cm, gray!20, <->] (-0.5*\colsp,5+\extrarowsp)--node[black, above, pos=0] {$q$}(-0.5*\colsp,-1)--(4*\colsp,-1) node[black, right] {$p$};
			\draw (-0.5*\colsp,0) node {$0$};	
			\draw (-0.5*\colsp,1+0.5*\extrarowsp) node {$1$};
			\draw (-0.5*\colsp,2+\extrarowsp) node {$2$};
            \draw (-0.5*\colsp,3+\extrarowsp) node {$3$};
            \draw (0,-1) node {$0$};
			\draw (1*\colsp,-1) node {$1$};	
            \draw (2*\colsp,-1) node {$2$};
			\draw (3*\colsp,-1) node {$3$};

            \draw (1*\colsp,1+\extrarowsp) node {$\oplus$};
            \draw (0.5*\colsp,1) node {$\oplus$};
            \draw (1*\colsp,1) node {$\oplus$};

            \draw (2.2*\colsp,1+\extrarowsp) node {$\oplus$};
            \draw (1.63*\colsp,1) node {$\oplus$};
            \draw (2.2*\colsp,1) node {$\oplus$};

            \foreach \x in {0,1,2,3} \foreach \y in {0,2+\extrarowsp,3+\extrarowsp} \draw (\x*\colsp,\y) node[black!80,scale=1.2] {$0$};
            \draw (3*\colsp,1+0.5*\extrarowsp) node[black!80,scale=1.2] {$0$};

			\foreach \x in {(3.5*\colsp,-1), (3.5*\colsp,0), (3.5*\colsp,1+0.5*\extrarowsp), (3.5*\colsp,2+\extrarowsp),(3.5*\colsp,3+\extrarowsp) }
			\draw \x node {$\cdots$};	
            \foreach \x in {(-0.5*\colsp,4+\extrarowsp), (0*\colsp,4+\extrarowsp), (1*\colsp,4+\extrarowsp), (2*\colsp,4+\extrarowsp),(3*\colsp,4+\extrarowsp) }
			\draw \x node {$\vdots$};	

            \foreach \x in {(0.3*\colsp,1+0.5*\extrarowsp), (1.6*\colsp,1+0.5*\extrarowsp) }
			\draw \x node {$\xleftarrow{}$};	

        \begin{scope}[xshift=0, yshift = 0.7cm+0.5*\extrarowsp cm, scale = 0.3] 
            \def\wid{\standardWidth}
            \def\hei{\standardHeight}
            \def\nodesize{3}
            \def\ang{90}

            \node[v, minimum size=\nodesize] (11) at (0* \wid,0*\hei) {};
            \node[v, minimum size=\nodesize] (12) at (0* \wid,1*\hei) {};
            \node[v, minimum size=\nodesize] (13) at (0* \wid,2*\hei) {};
            \node[v, minimum size=\nodesize] (14) at (0* \wid,3*\hei) {};

            \draw[e] (11) to[in =180, out =180] (12);
            \draw[e] (13) to[in =180, out =180] (14);

            \draw[e] (11) to[in =0, out =0] (12);
            \draw[e] (13) to[out = 0, in = 0] (14);

            \draw[thick] (0* \wid,-0.25*\hei) -- (0* \wid,3.25*\hei);
        \end{scope}

        \begin{scope}[xshift=0.85*\colsp cm, yshift = 0.7cm+\extrarowsp cm, scale = 0.3] 
            \def\wid{\standardWidth}
            \def\hei{\standardHeight}
            \def\nodesize{3}
            \def\ang{90}

            \node[v, minimum size=\nodesize] (01) at (-2* \wid,0*\hei) {};
            \node[v, minimum size=\nodesize] (02) at (-2* \wid,1*\hei) {};
            \node[v, minimum size=\nodesize] (03) at (-2* \wid,2*\hei) {};
            \node[v, minimum size=\nodesize] (04) at (-2* \wid,3*\hei) {};

            \node[v, minimum size=\nodesize] (11) at (0* \wid,0*\hei) {};
            \node[v, minimum size=\nodesize] (12) at (0* \wid,1*\hei) {};
            \node[v, minimum size=\nodesize] (13) at (0* \wid,2*\hei) {};
            \node[v, minimum size=\nodesize] (14) at (0* \wid,3*\hei) {};

            \draw[e] (02) to[in =0, out =0] (03);
            \draw[e] (01) to[in =180, out =180] (02);
            \draw[e] (03) to[in =180, out =180] (04);

            \draw[e] (11) to[in =0, out =0] (12);
            \draw[e] (13) to[in =0, out =0] (14);

            \draw[e] (11) to[in =180, out =180] (12);
            \draw[e] (13) to[out = 180, in = 0] (01);
            \draw[e] (14) to[out = 180, in = 0] (04);

            \draw[thick] (0* \wid,-0.25*\hei) -- (0* \wid,3.25*\hei);
            \draw[thick] (-2* \wid,-0.25*\hei) -- (-2* \wid,3.25*\hei);
        \end{scope}

        \begin{scope}[xshift=1.35*\colsp cm, yshift = 0.7cm+\extrarowsp cm, scale = 0.3] 
            \def\wid{\standardWidth}
            \def\hei{\standardHeight}
            \def\nodesize{3}
            \def\ang{90}

            \node[v, minimum size=\nodesize] (01) at (-2* \wid,0*\hei) {};
            \node[v, minimum size=\nodesize] (02) at (-2* \wid,1*\hei) {};
            \node[v, minimum size=\nodesize] (03) at (-2* \wid,2*\hei) {};
            \node[v, minimum size=\nodesize] (04) at (-2* \wid,3*\hei) {};

            \node[v, minimum size=\nodesize] (11) at (0* \wid,0*\hei) {};
            \node[v, minimum size=\nodesize] (12) at (0* \wid,1*\hei) {};
            \node[v, minimum size=\nodesize] (13) at (0* \wid,2*\hei) {};
            \node[v, minimum size=\nodesize] (14) at (0* \wid,3*\hei) {};

            \draw[e] (02) to[in =0, out =0] (03);
            \draw[e] (01) to[in =180, out =180] (02);
            \draw[e] (03) to[in =180, out =180] (04);

            \draw[e] (14) to[in =0, out =0] (13);
            \draw[e] (12) to[in =0, out =0] (11);

            \draw[e] (14) to[in =180, out =180] (13);
            \draw[e] (12) to[out = 180, in = 0] (04);
            \draw[e] (11) to[out = 180, in = 0] (01);

            \draw[thick] (0* \wid,-0.25*\hei) -- (0* \wid,3.25*\hei);
            \draw[thick] (-2* \wid,-0.25*\hei) -- (-2* \wid,3.25*\hei);
        \end{scope}

        \begin{scope}[xshift=0.85*\colsp cm, yshift = 0.7cm, scale = 0.3] 
            \def\wid{\standardWidth}
            \def\hei{\standardHeight}
            \def\nodesize{3}
            \def\ang{90}

            \node[v, minimum size=\nodesize] (01) at (-2* \wid,0*\hei) {};
            \node[v, minimum size=\nodesize] (02) at (-2* \wid,1*\hei) {};
            \node[v, minimum size=\nodesize] (03) at (-2* \wid,2*\hei) {};
            \node[v, minimum size=\nodesize] (04) at (-2* \wid,3*\hei) {};

            \node[v, minimum size=\nodesize] (11) at (0* \wid,0*\hei) {};
            \node[v, minimum size=\nodesize] (12) at (0* \wid,1*\hei) {};
            \node[v, minimum size=\nodesize] (13) at (0* \wid,2*\hei) {};
            \node[v, minimum size=\nodesize] (14) at (0* \wid,3*\hei) {};

            \draw[e] (13) to[in =180, out =180] (12);
            \draw[e] (11) to[in =0, out =0] (12);
            \draw[e] (13) to[in =0, out =0] (14);

            \draw[e] (01) to[in =180, out =180] (02);
            \draw[e] (03) to[in =180, out =180] (04);

            \draw[e] (01) to[in =0, out =0] (02);
            \draw[e] (03) to[out = 0, in = 180] (11);
            \draw[e] (04) to[out = 0, in = 180] (14);

            \draw[thick] (0* \wid,-0.25*\hei) -- (0* \wid,3.25*\hei);
            \draw[thick] (-2* \wid,-0.25*\hei) -- (-2* \wid,3.25*\hei);
        \end{scope}

        \begin{scope}[xshift=1.35*\colsp cm, yshift = 0.7cm, scale = 0.3] 
            \def\wid{\standardWidth}
            \def\hei{\standardHeight}
            \def\nodesize{3}
            \def\ang{90}

            \node[v, minimum size=\nodesize] (01) at (-2* \wid,0*\hei) {};
            \node[v, minimum size=\nodesize] (02) at (-2* \wid,1*\hei) {};
            \node[v, minimum size=\nodesize] (03) at (-2* \wid,2*\hei) {};
            \node[v, minimum size=\nodesize] (04) at (-2* \wid,3*\hei) {};

            \node[v, minimum size=\nodesize] (11) at (0* \wid,0*\hei) {};
            \node[v, minimum size=\nodesize] (12) at (0* \wid,1*\hei) {};
            \node[v, minimum size=\nodesize] (13) at (0* \wid,2*\hei) {};
            \node[v, minimum size=\nodesize] (14) at (0* \wid,3*\hei) {};

            \draw[e] (13) to[in =180, out =180] (12);
            \draw[e] (11) to[in =0, out =0] (12);
            \draw[e] (13) to[in =0, out =0] (14);

            \draw[e] (04) to[in =180, out =180] (03);
            \draw[e] (02) to[in =180, out =180] (01);

            \draw[e] (04) to[in =0, out =0] (03);
            \draw[e] (02) to[out = 0, in = 180] (14);
            \draw[e] (01) to[out = 0, in = 180] (11);

            \draw[thick] (0* \wid,-0.25*\hei) -- (0* \wid,3.25*\hei);
            \draw[thick] (-2* \wid,-0.25*\hei) -- (-2* \wid,3.25*\hei);
        \end{scope}

        \begin{scope}[xshift=1.9*\colsp cm, yshift = 0.7cm+\extrarowsp cm, scale = 0.3] 
            \def\wid{\standardWidth}
            \def\hei{\standardHeight}
            \def\nodesize{3}

            \node[v, minimum size=\nodesize] (01) at (-2* \wid,0*\hei) {};
            \node[v, minimum size=\nodesize] (02) at (-2* \wid,1*\hei) {};
            \node[v, minimum size=\nodesize] (03) at (-2* \wid,2*\hei) {};
            \node[v, minimum size=\nodesize] (04) at (-2* \wid,3*\hei) {};

            \node[v, minimum size=\nodesize] (11) at (0* \wid,0*\hei) {};
            \node[v, minimum size=\nodesize] (12) at (0* \wid,1*\hei) {};
            \node[v, minimum size=\nodesize] (13) at (0* \wid,2*\hei) {};
            \node[v, minimum size=\nodesize] (14) at (0* \wid,3*\hei) {};

            \node[v, minimum size=\nodesize] (21) at (2* \wid,0*\hei) {};
            \node[v, minimum size=\nodesize] (22) at (2* \wid,1*\hei) {};
            \node[v, minimum size=\nodesize] (23) at (2* \wid,2*\hei) {};
            \node[v, minimum size=\nodesize] (24) at (2* \wid,3*\hei) {};

            \draw[e] (02) to[in =0, out =0] (03);
            \draw[e] (01) to[in =180, out =180] (02);
            \draw[e] (03) to[in =180, out =180] (04);
            \draw[e] (11) to[in =180, out =180] (12);
            \draw[e] (13) to[out = 180, in = 0] (01);
            \draw[e] (14) to[out = 180, in = 0] (04);

            \draw[e] (11) to[in =0, out =0] (12);
            \draw[e] (23) to[in =180, out =180] (22);
            \draw[e] (21) to[in =0, out =0] (22);
            \draw[e] (23) to[in =0, out =0] (24);
            \draw[e] (13) to[out = 0, in = 180] (21);
            \draw[e] (14) to[out = 0, in = 180] (24);

            \draw[thick] (0* \wid,-0.25*\hei) -- (0* \wid,3.25*\hei);
            \draw[thick] (-2* \wid,-0.25*\hei) -- (-2* \wid,3.25*\hei);
            \draw[thick] (2* \wid,-0.25*\hei) -- (2* \wid,3.25*\hei);
        \end{scope}

        \begin{scope}[xshift=2.5*\colsp cm, yshift = 0.7cm+\extrarowsp cm, scale = 0.3] 
            \def\wid{\standardWidth}
            \def\hei{\standardHeight}
            \def\nodesize{3}

            \node[v, minimum size=\nodesize] (01) at (-2* \wid,0*\hei) {};
            \node[v, minimum size=\nodesize] (02) at (-2* \wid,1*\hei) {};
            \node[v, minimum size=\nodesize] (03) at (-2* \wid,2*\hei) {};
            \node[v, minimum size=\nodesize] (04) at (-2* \wid,3*\hei) {};

            \node[v, minimum size=\nodesize] (11) at (0* \wid,0*\hei) {};
            \node[v, minimum size=\nodesize] (12) at (0* \wid,1*\hei) {};
            \node[v, minimum size=\nodesize] (13) at (0* \wid,2*\hei) {};
            \node[v, minimum size=\nodesize] (14) at (0* \wid,3*\hei) {};

            \node[v, minimum size=\nodesize] (21) at (2* \wid,0*\hei) {};
            \node[v, minimum size=\nodesize] (22) at (2* \wid,1*\hei) {};
            \node[v, minimum size=\nodesize] (23) at (2* \wid,2*\hei) {};
            \node[v, minimum size=\nodesize] (24) at (2* \wid,3*\hei) {};

            \draw[e] (02) to[in =0, out =0] (03);
            \draw[e] (01) to[in =180, out =180] (02);
            \draw[e] (03) to[in =180, out =180] (04);
            \draw[e] (11) to[in =180, out =180] (12);
            \draw[e] (13) to[out = 180, in = 0] (01);
            \draw[e] (14) to[out = 180, in = 0] (04);

            \draw[e] (13) to[in =0, out =0] (14);
            \draw[e] (23) to[in =180, out =180] (22);
            \draw[e] (21) to[in =0, out =0] (22);
            \draw[e] (23) to[in =0, out =0] (24);
            \draw[e] (11) to[out = 0, in = 180] (21);
            \draw[e] (12) to[out = 0, in = 180] (24);

            \draw[thick] (0* \wid,-0.25*\hei) -- (0* \wid,3.25*\hei);
            \draw[thick] (-2* \wid,-0.25*\hei) -- (-2* \wid,3.25*\hei);
            \draw[thick] (2* \wid,-0.25*\hei) -- (2* \wid,3.25*\hei);
        \end{scope}

        \begin{scope}[xshift=1.9*\colsp cm, yshift = 0.7cm, scale = 0.3] 
            \def\wid{\standardWidth}
            \def\hei{\standardHeight}
            \def\nodesize{3}

            \node[v, minimum size=\nodesize] (01) at (-2* \wid,0*\hei) {};
            \node[v, minimum size=\nodesize] (02) at (-2* \wid,1*\hei) {};
            \node[v, minimum size=\nodesize] (03) at (-2* \wid,2*\hei) {};
            \node[v, minimum size=\nodesize] (04) at (-2* \wid,3*\hei) {};

            \node[v, minimum size=\nodesize] (11) at (0* \wid,0*\hei) {};
            \node[v, minimum size=\nodesize] (12) at (0* \wid,1*\hei) {};
            \node[v, minimum size=\nodesize] (13) at (0* \wid,2*\hei) {};
            \node[v, minimum size=\nodesize] (14) at (0* \wid,3*\hei) {};

            \node[v, minimum size=\nodesize] (21) at (2* \wid,0*\hei) {};
            \node[v, minimum size=\nodesize] (22) at (2* \wid,1*\hei) {};
            \node[v, minimum size=\nodesize] (23) at (2* \wid,2*\hei) {};
            \node[v, minimum size=\nodesize] (24) at (2* \wid,3*\hei) {};

            \draw[e] (02) to[in =0, out =0] (03);
            \draw[e] (01) to[in =180, out =180] (02);
            \draw[e] (03) to[in =180, out =180] (04);
            \draw[e] (13) to[in =180, out =180] (14);
            \draw[e] (11) to[out = 180, in = 0] (01);
            \draw[e] (12) to[out = 180, in = 0] (04);

            \draw[e] (13) to[in =0, out =0] (14);
            \draw[e] (23) to[in =180, out =180] (22);
            \draw[e] (21) to[in =0, out =0] (22);
            \draw[e] (23) to[in =0, out =0] (24);
            \draw[e] (11) to[out = 0, in = 180] (21);
            \draw[e] (12) to[out = 0, in = 180] (24);

            \draw[thick] (0* \wid,-0.25*\hei) -- (0* \wid,3.25*\hei);
            \draw[thick] (-2* \wid,-0.25*\hei) -- (-2* \wid,3.25*\hei);
            \draw[thick] (2* \wid,-0.25*\hei) -- (2* \wid,3.25*\hei);
        \end{scope}

        \begin{scope}[xshift=2.5*\colsp cm, yshift = 0.7cm, scale = 0.3] 
            \def\wid{\standardWidth}
            \def\hei{\standardHeight}
            \def\nodesize{3}

            \node[v, minimum size=\nodesize] (01) at (-2* \wid,0*\hei) {};
            \node[v, minimum size=\nodesize] (02) at (-2* \wid,1*\hei) {};
            \node[v, minimum size=\nodesize] (03) at (-2* \wid,2*\hei) {};
            \node[v, minimum size=\nodesize] (04) at (-2* \wid,3*\hei) {};

            \node[v, minimum size=\nodesize] (11) at (0* \wid,0*\hei) {};
            \node[v, minimum size=\nodesize] (12) at (0* \wid,1*\hei) {};
            \node[v, minimum size=\nodesize] (13) at (0* \wid,2*\hei) {};
            \node[v, minimum size=\nodesize] (14) at (0* \wid,3*\hei) {};

            \node[v, minimum size=\nodesize] (21) at (2* \wid,0*\hei) {};
            \node[v, minimum size=\nodesize] (22) at (2* \wid,1*\hei) {};
            \node[v, minimum size=\nodesize] (23) at (2* \wid,2*\hei) {};
            \node[v, minimum size=\nodesize] (24) at (2* \wid,3*\hei) {};

            \draw[e] (02) to[in =0, out =0] (03);
            \draw[e] (01) to[in =180, out =180] (02);
            \draw[e] (03) to[in =180, out =180] (04);
            \draw[e] (13) to[in =180, out =180] (14);
            \draw[e] (11) to[out = 180, in = 0] (01);
            \draw[e] (12) to[out = 180, in = 0] (04);

            \draw[e] (11) to[in =0, out =0] (12);
            \draw[e] (23) to[in =180, out =180] (22);
            \draw[e] (21) to[in =0, out =0] (22);
            \draw[e] (23) to[in =0, out =0] (24);
            \draw[e] (13) to[out = 0, in = 180] (21);
            \draw[e] (14) to[out = 0, in = 180] (24);

            \draw[thick] (0* \wid,-0.25*\hei) -- (0* \wid,3.25*\hei);
            \draw[thick] (-2* \wid,-0.25*\hei) -- (-2* \wid,3.25*\hei);
            \draw[thick] (2* \wid,-0.25*\hei) -- (2* \wid,3.25*\hei);
        \end{scope}
			
		\end{tikzpicture}
\caption{The page $E^1_{p,q}$ of the spectral sequence computing the homology of $C_*[2,0]^{\mathrm{out}}$.}
\label{fig:2unnested}
\end{figure}
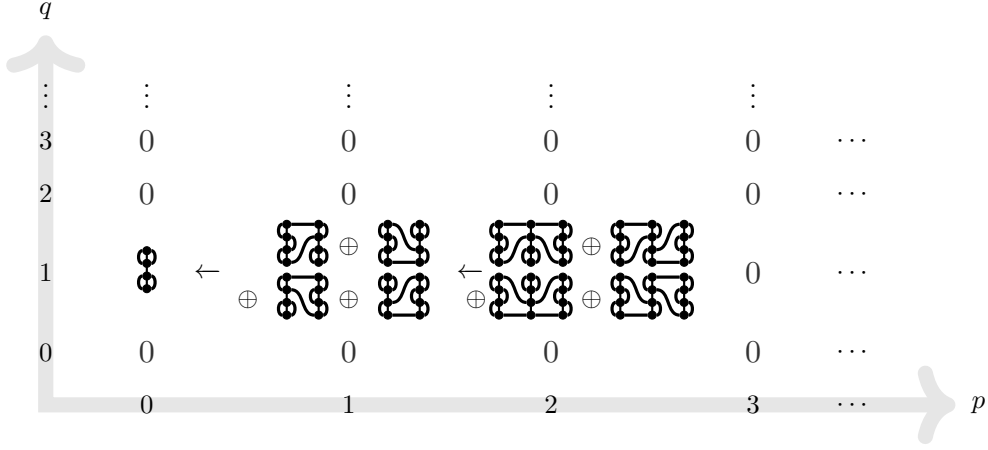
This time it does not happen that everything cancels under $d^1$, but direct computation shows that the only class surviving the $d^1$-differential is the one given in the theorem statement. The result follows.
\end{proof}

\subsection{Part (\ref{part:igeq3})}

We now show that the homology of the complex with $i$ loops and no dividers vanishes when $i \geq 3$, establishing the last part of \cref{thm:mainTechnical}.

Consider the pivots, i.e., the letters from \cref{lem:2noAlphabet}. Of these letters, we call
\begin{center}
\begin{tikzpicture}[x=1.5cm,y=-.5cm,baseline=-1.05cm]

\def\wid{\standardWidth}
\def\hei{\standardHeight}
\def\nodesize{3}
\def\ang{90}

\node[] (0h1) at (0.5* \wid,1*\hei) {};
\node[] (0h2) at (0.5* \wid,2*\hei) {};
\node[v, minimum size=\nodesize] (11) at (0* \wid,0*\hei) {};
\node[v, minimum size=\nodesize] (12) at (0* \wid,1*\hei) {};
\node[v, minimum size=\nodesize] (13) at (0* \wid,2*\hei) {};
\node[v, minimum size=\nodesize] (14) at (0* \wid,3*\hei) {};
\node[] (1h1) at (0.5* \wid,1*\hei) {};
\node[] (1h2) at (0.5* \wid,2*\hei) {};

\draw[e] (11) to[in =180, out =180] (14);
\draw[e] (13) to[in =180, out =180] (12);

\draw[e] (12) to[in =0, out =0] (13);
\draw[e] (11) to[out = 0, in = 180] (1h1);
\draw[e] (14) to[out = 0, in = 180] (1h2);

\draw[very thick] (0* \wid,-0.25*\hei) -- (0* \wid,3.25*\hei);
\end{tikzpicture}
,
\begin{tikzpicture}[x=1.5cm,y=-.5cm,baseline=-1.05cm]

\def\wid{\standardWidth}
\def\hei{\standardHeight}
\def\nodesize{3}
\def\ang{90}

\node[] (0h1) at (-0.5* \wid,1*\hei) {};
\node[] (0h2) at (-0.5* \wid,2*\hei) {};
\node[v, minimum size=\nodesize] (11) at (0* \wid,0*\hei) {};
\node[v, minimum size=\nodesize] (12) at (0* \wid,1*\hei) {};
\node[v, minimum size=\nodesize] (13) at (0* \wid,2*\hei) {};
\node[v, minimum size=\nodesize] (14) at (0* \wid,3*\hei) {};
\node[] (1h1) at (0.5* \wid,1*\hei) {};
\node[] (1h2) at (0.5* \wid,2*\hei) {};

\draw[e] (11) to[in =180, out =180] (12);
\draw[e] (13) to[in =180, out =180] (14);

\draw[e] (13) to[in =0, out =0] (14);
\draw[e] (11) to[out = 0, in = 180] (1h1);
\draw[e] (12) to[out = 0, in = 180] (1h2);

\draw[very thick] (0* \wid,-0.25*\hei) -- (0* \wid,3.25*\hei);
\end{tikzpicture}
,
\begin{tikzpicture}[x=1.5cm,y=-.5cm,baseline=-1.05cm]

\def\wid{\standardWidth}
\def\hei{\standardHeight}
\def\nodesize{3}
\def\ang{90}

\node[] (0h1) at (-0.5* \wid,1*\hei) {};
\node[] (0h2) at (-0.5* \wid,2*\hei) {};
\node[v, minimum size=\nodesize] (11) at (0* \wid,0*\hei) {};
\node[v, minimum size=\nodesize] (12) at (0* \wid,1*\hei) {};
\node[v, minimum size=\nodesize] (13) at (0* \wid,2*\hei) {};
\node[v, minimum size=\nodesize] (14) at (0* \wid,3*\hei) {};
\node[] (1h1) at (0.5* \wid,1*\hei) {};
\node[] (1h2) at (0.5* \wid,2*\hei) {};

\draw[e] (12) to[in =180, out =180] (11);
\draw[e] (14) to[in =180, out =180] (13);

\draw[e] (12) to[in =0, out =0] (11);
\draw[e] (14) to[out = 0, in = 180] (1h2);
\draw[e] (13) to[out = 0, in = 180] (1h1);

\draw[very thick] (0* \wid,-0.25*\hei) -- (0* \wid,3.25*\hei);
\end{tikzpicture}
\end{center}
\emph{left end pivots}, and we call
\begin{center}
\begin{tikzpicture}[x=1.5cm,y=-.5cm,baseline=-1.05cm]

\def\wid{\standardWidth}
\def\hei{\standardHeight}
\def\nodesize{3}
\def\ang{90}

\node[] (0h1) at (-0.5* \wid,1*\hei) {};
\node[] (0h2) at (-0.5* \wid,2*\hei) {};
\node[v, minimum size=\nodesize] (11) at (0* \wid,0*\hei) {};
\node[v, minimum size=\nodesize] (12) at (0* \wid,1*\hei) {};
\node[v, minimum size=\nodesize] (13) at (0* \wid,2*\hei) {};
\node[v, minimum size=\nodesize] (14) at (0* \wid,3*\hei) {};
\node[] (1h1) at (0.5* \wid,1*\hei) {};
\node[] (1h2) at (0.5* \wid,2*\hei) {};

\draw[e] (11) to[in =0, out =0] (14);
\draw[e] (12) to[in =0, out =0] (13);

\draw[e] (12) to[in =180, out =180] (13);
\draw[e] (11) to[out = 180, in = 0] (0h1);
\draw[e] (14) to[out = 180, in = 0] (0h2);

\draw[very thick] (0* \wid,-0.25*\hei) -- (0* \wid,3.25*\hei);
\end{tikzpicture}
,
\begin{tikzpicture}[x=1.5cm,y=-.5cm,baseline=-1.05cm]

\def\wid{\standardWidth}
\def\hei{\standardHeight}
\def\nodesize{3}
\def\ang{90}

\node[] (0h1) at (-0.5* \wid,1*\hei) {};
\node[] (0h2) at (-0.5* \wid,2*\hei) {};
\node[v, minimum size=\nodesize] (11) at (0* \wid,0*\hei) {};
\node[v, minimum size=\nodesize] (12) at (0* \wid,1*\hei) {};
\node[v, minimum size=\nodesize] (13) at (0* \wid,2*\hei) {};
\node[v, minimum size=\nodesize] (14) at (0* \wid,3*\hei) {};
\node[] (1h1) at (0.5* \wid,1*\hei) {};
\node[] (1h2) at (0.5* \wid,2*\hei) {};

\draw[e] (11) to[in =0, out =0] (12);
\draw[e] (13) to[in =0, out =0] (14);

\draw[e] (13) to[in =180, out =180] (14);
\draw[e] (11) to[out = 180, in = 0] (0h1);
\draw[e] (12) to[out = 180, in = 0] (0h2);

\draw[very thick] (0* \wid,-0.25*\hei) -- (0* \wid,3.25*\hei);
\end{tikzpicture}
,
\begin{tikzpicture}[x=1.5cm,y=-.5cm,baseline=-1.05cm]

\def\wid{\standardWidth}
\def\hei{\standardHeight}
\def\nodesize{3}
\def\ang{90}

\node[] (0h1) at (-0.5* \wid,1*\hei) {};
\node[] (0h2) at (-0.5* \wid,2*\hei) {};
\node[v, minimum size=\nodesize] (11) at (0* \wid,0*\hei) {};
\node[v, minimum size=\nodesize] (12) at (0* \wid,1*\hei) {};
\node[v, minimum size=\nodesize] (13) at (0* \wid,2*\hei) {};
\node[v, minimum size=\nodesize] (14) at (0* \wid,3*\hei) {};
\node[] (1h1) at (0.5* \wid,1*\hei) {};
\node[] (1h2) at (0.5* \wid,2*\hei) {};

\draw[e] (14) to[in =0, out =0] (13);
\draw[e] (12) to[in =0, out =0] (11);

\draw[e] (12) to[in =180, out =180] (11);
\draw[e] (14) to[out = 180, in = 0] (0h2);
\draw[e] (13) to[out = 180, in = 0] (0h1);

\draw[very thick] (0* \wid,-0.25*\hei) -- (0* \wid,3.25*\hei);
\end{tikzpicture}
\end{center}
\emph{right end pivots.}

\begin{lemma} A graffito with $i \geq 3$ loops and no dividers contains precisely $i-1$ of the letters from \cref{lem:2noAlphabet}. The left end pivots may only occur as the first letter of the graffito, and the right end pivots may only occur as the last letter. \qed \end{lemma}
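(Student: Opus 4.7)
The plan is to prove both statements via a double count, rooted in the contiguous-interval structure of loop supports and the reduced-complex hypothesis $\beta_i \in I_{\leq 3}$.

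The key preliminary observations will be as follows. Each loop $\ell$, being a connected planar curve built from arcs within bars and arcs across single gaps, can only touch a contiguous interval of bars; I will call this its \emph{support}, of length $L_\ell$. Because each bar has only $4$ nodes and a loop passing through a bar uses $2$ or $4$ of them, at most two loops pass through any given bar, and a bar is a pivot letter (one of those from \cref{lem:2noAlphabet}) precisely when it is touched by two distinct loops. Double-counting bar-loop incidences then gives $\sum_\ell L_\ell = p + P$, where $p$ is the number of bars and $P$ the number of pivot letters.

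Next I would use the reduced-complex hypothesis. Any interior TL element $\beta_i \in I_{\leq 3}$ has some number $k_i$ of left-to-right arcs; by parity on its four left nodes this is even, and $k_i \leq 3$ forces $k_i \in \{0,2\}$. The no-divider hypothesis rules out $k_i = 0$, so every interior gap carries exactly two left-to-right arcs. A loop crossing a gap must meet the transverse line an even number of times, and so contributes at least $2$ of the left-to-right arcs there; hence the two arcs at each gap belong to a single loop, and each gap is crossed by exactly one loop. Summing over loops, $\sum_\ell (L_\ell - 1) = p - 1$, and subtracting from the first identity gives $P = i - 1$.

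The positional claim should then be immediate: a left end pivot has zero connections on its left side, so no arcs cross from that bar to the bar to its left. If such a pivot sat in any position other than the first, the gap immediately to its left would have $k = 0$, contradicting no-dividers; the right end case is symmetric.

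The delicate point in this plan is the single-loop argument in step two -- that the two left-to-right arcs at each gap belong to a common loop rather than to two different loops each contributing one. Ruling out the latter relies on the topological fact that a closed curve crosses a transverse line in an even number of points; once this is in hand, the rest is a routine double count plus a one-line appeal to the no-divider hypothesis.
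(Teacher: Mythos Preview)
Your argument is correct. The paper gives no proof at all (the statement is marked \qed\ and treated as self-evident from the picture), so there is nothing to compare at the level of strategy; what you have supplied is a genuine proof of something the paper leaves to the reader. Your double-counting identity $\sum_\ell L_\ell = p + P$ together with the gap-count $\sum_\ell (L_\ell - 1) = p-1$ is a clean way to extract $P = i-1$, and both ingredients are justified: the parity of $|N|$ at a bar follows because each node is a transversal crossing of the loop with the vertical line (one arc on each side), and the single-loop-per-gap claim follows for the same reason applied to a vertical line in the gap. The positional claim is exactly as you say: a left end pivot has no through-connections on the left, so if it sat at position $j \ge 2$ the element $\beta_{j-1}$ would lie in $I_0$, giving a divider.

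One small remark: in your gap-counting step you implicitly use that a loop with support $[a,b]$ actually crosses \emph{every} gap between $a$ and $b$, not merely some of them. This is true (connectivity of the loop forces a path across each intermediate gap), and you essentially say it when arguing the support is contiguous, but it is worth making explicit since it is what turns ``$\ell$ crosses $L_\ell - 1$ gaps'' into an equality rather than an inequality.
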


For a graffito $w$ with $i \geq 3$ loops and no dividers, let $(p_1, \dots , p_{i-1})$ be the ordered list of the $i-1$ pivots. We call this the \emph{pivot sequence} associated to $w$.

\begin{lemma} \label{lem:pivotSequences} Let $w$ be a graffito with $i \geq 3$ loops and no dividers, and let $(p_1, \dots , p_{i-1})$ be the associated pivot sequence. For a face map $d_j$, if $d_j(w)$ is nonzero (in $C_*[i,0]$), then it must have pivot sequence $(p_1', \dots , p_{i-1}')$, where:
\begin{itemize}
    \item $p_i'=p_i$ for $2 \leq i \leq i-2$. That is, the face map can change only the first and last pivots.
    \item The pivot $p_1'$ can differ from $p_1$ only if $p_1'$ is a left end pivot and $p_1$ is not.
    \item The pivot $p_{i-1}'$ can differ from $p_{i-1}$ only if $p_{i-1}'$ is a right end pivot and $p_{i-1}$ is not.
\end{itemize}
\end{lemma}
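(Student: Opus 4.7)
The proof proceeds by case analysis on the face map $d_j$, distinguishing internal deletions ($1 \leq j \leq p-1$) from the endmost deletions $d_0$ and $d_p$. The key geometric observation is that every pivot letter appearing in $C_*[i,0]$ for $i \geq 3$, namely every internal, left end, or right end pivot from \cref{lem:2noAlphabet}, carries one of its two meeting loops entirely closed on its own bar and the other merely passing through. This is the dichotomy already used in the proof of \cref{lem:2noAlphabet}: working in the reduced bar construction forces at most one of the two meeting loops to extend to each side of the pivot bar, so one of them must close on the bar itself.

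For an internal $d_j$ the letter $\ell_j$ on bar $j$ is removed and the letters on bars $j-1$ and $j+1$ become directly adjacent, all other letters being untouched. If $\ell_j$ is not a pivot, it lies in the body of a single loop and its removal merely shortens that loop, so every pivot survives at the same position in the word and the pivot sequence is preserved. If $\ell_j$ is a pivot, then the closed loop on its bar is unpinned, contributing a factor of $a=0$, so $d_j(w)=0$ in $C_*[i,0]$.

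For the endmost maps, by $\sigma_{\leftrightarrow}$-symmetry it suffices to treat $d_0$. This combines $\beta_0$ with $\beta_1$ into a new first letter on what was bar $1$, leaving letters on old bars of index $\geq 2$ untouched, so any pivot originally on such a bar survives unchanged. In any surviving nonzero case we may assume $\ell_0$ is not a pivot, for else $p_1 = \ell_0$ is a left end pivot and the closed-loop argument gives $d_0(w)=0$; it follows that $p_1$ lies on old bar $\geq 1$ and therefore $p_2,\dots,p_{i-1}$ all lie on old bar $\geq 2$, so they are preserved. Only $p_1$ can be modified. If $p_1 = \ell_1$ is an internal pivot, then provided the result is nonzero in $C_*[i,0]$, the new first letter still carries the two loops that met at $p_1$ but now has $0$ left connections, making it a left end pivot, exactly the allowed mode of change. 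If $p_1$ lies on old bar $\geq 2$, a loop-count check shows the new first letter cannot become a pivot without increasing the loop count, so the pivot sequence is unchanged. A symmetric treatment of $d_p$ handles $p_{i-1}$.

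The main obstacle is verifying the geometric claim underpinning the pivot argument, namely that every relevant letter in \cref{lem:2noAlphabet} carries a closed loop entirely on its bar; once this is in hand, the remainder is bookkeeping about which bars are touched by each face map together with a short loop-count arithmetic check.
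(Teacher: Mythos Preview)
Your overall strategy is the same as the paper's, but the ``key geometric observation'' you rely on is false, and this leaves a genuine gap. Not every internal pivot carries a loop entirely closed on its bar. Of the five internal pivots in \cref{lem:2noAlphabet}, the last two have no closed loop: in each, one pair of adjacent nodes is cupped on the left and propagates right, while the other pair propagates left and is cupped on the right, so each of the two loops turns around at this bar but continues to exactly one side. For these two pivots, deleting the bar does \emph{not} unpin any loop. What happens instead is that the product $\beta_{j-1}\beta_j$ has no through-strands (the propagating pair on each side runs into a cup on the other), so it lies in $I_0$ and a new divider appears; the result is therefore zero in the quotient $C_*[i,0]$, but for a different reason than the one you give. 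This is exactly the dichotomy recorded in \cref{lem:deletePivotGet0}: deleting at a pivot either unpins a loop \emph{or} creates a divider. Your argument only accounts for the first alternative.

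Once this is repaired, the remainder of your proof agrees with the paper's. The paper likewise cites \cref{lem:deletePivotGet0} to dispose of deletions at pivots, then observes that the four non-pivot internal letters are simply removed (preserving the pivot sequence), and that an endmost deletion joins the hanging edges of the neighbouring letter, turning the first pivot into a left end pivot if it was adjacent. Your pivot-count check for the sub-case where $p_1$ lies on bar $\geq 2$ is a valid alternative to the paper's direct inspection; both reach the same conclusion. Note also that your closed-loop reasoning \emph{is} correct for the left and right end pivots (each of those genuinely has a closed loop), so the endmost part of your argument needs no change.
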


The point is that the pivot sequence is almost invariant under the differential, except that the leftmost portion of the graffito may `collapse onto' the left hand side of the first pivot, changing the pivot letter, and likewise on the right.

\begin{proof} Deleting a bar at a pivot gives zero by \cref{lem:deletePivotGet0}. It therefore suffices to consider bar deletions which occur at non-pivot letters. Consider such a bar deletion, say at place $j$.

There are two cases. First, assume that this $j$-th letter is neither the first or last letter of the graffito. Since we assume that the graffito contains no dividers, this letter must have two connections on each side, and since it is not a pivot it must be one of the following:
\begin{center}
\begin{tikzpicture}[x=1.5cm,y=-.5cm,baseline=-1.05cm]

\def\wid{\standardWidth}
\def\hei{\standardHeight}
\def\nodesize{3}
\def\ang{90}

\node[] (0h1) at (-0.5* \wid,1*\hei) {};
\node[] (0h2) at (-0.5* \wid,2*\hei) {};
\node[v, minimum size=\nodesize] (11) at (0* \wid,0*\hei) {};
\node[v, minimum size=\nodesize] (12) at (0* \wid,1*\hei) {};
\node[v, minimum size=\nodesize] (13) at (0* \wid,2*\hei) {};
\node[v, minimum size=\nodesize] (14) at (0* \wid,3*\hei) {};
\node[] (1h1) at (0.5* \wid,1*\hei) {};
\node[] (1h2) at (0.5* \wid,2*\hei) {};

\draw[e] (11) to[in =0, out =180] (0h1);
\draw[e] (12) to[in =0, out =180] (0h2);
\draw[e] (13) to[in =180, out =180] (14);

\draw[e] (12) to[in =0, out =0] (13);
\draw[e] (11) to[out = 0, in = 180] (1h1);
\draw[e] (14) to[out = 0, in = 180] (1h2);

\draw[very thick] (0* \wid,-0.25*\hei) -- (0* \wid,3.25*\hei);
\end{tikzpicture}
,
\begin{tikzpicture}[x=1.5cm,y=-.5cm,baseline=-1.05cm]

\def\wid{\standardWidth}
\def\hei{\standardHeight}
\def\nodesize{3}
\def\ang{90}

\node[] (0h1) at (-0.5* \wid,1*\hei) {};
\node[] (0h2) at (-0.5* \wid,2*\hei) {};
\node[v, minimum size=\nodesize] (11) at (0* \wid,0*\hei) {};
\node[v, minimum size=\nodesize] (12) at (0* \wid,1*\hei) {};
\node[v, minimum size=\nodesize] (13) at (0* \wid,2*\hei) {};
\node[v, minimum size=\nodesize] (14) at (0* \wid,3*\hei) {};
\node[] (1h1) at (0.5* \wid,1*\hei) {};
\node[] (1h2) at (0.5* \wid,2*\hei) {};

\draw[e] (13) to[in =00, out =180] (0h1);
\draw[e] (14) to[in =0, out =180] (0h2);
\draw[e] (12) to[in =180, out =180] (11);

\draw[e] (12) to[in =0, out =0] (13);
\draw[e] (11) to[out = 0, in = 180] (1h1);
\draw[e] (14) to[out = 0, in = 180] (1h2);

\draw[very thick] (0* \wid,-0.25*\hei) -- (0* \wid,3.25*\hei);
\end{tikzpicture}
,
\begin{tikzpicture}[x=1.5cm,y=-.5cm,baseline=-1.05cm]

\def\wid{\standardWidth}
\def\hei{\standardHeight}
\def\nodesize{3}
\def\ang{90}

\node[] (0h1) at (-0.5* \wid,1*\hei) {};
\node[] (0h2) at (-0.5* \wid,2*\hei) {};
\node[v, minimum size=\nodesize] (11) at (0* \wid,0*\hei) {};
\node[v, minimum size=\nodesize] (12) at (0* \wid,1*\hei) {};
\node[v, minimum size=\nodesize] (13) at (0* \wid,2*\hei) {};
\node[v, minimum size=\nodesize] (14) at (0* \wid,3*\hei) {};
\node[] (1h1) at (0.5* \wid,1*\hei) {};
\node[] (1h2) at (0.5* \wid,2*\hei) {};

\draw[e] (11) to[in =0, out =180] (0h1);
\draw[e] (14) to[in =0, out =180] (0h2);
\draw[e] (13) to[in =180, out =180] (12);

\draw[e] (13) to[in =0, out =0] (14);
\draw[e] (11) to[out = 0, in = 180] (1h1);
\draw[e] (12) to[out = 0, in = 180] (1h2);

\draw[very thick] (0* \wid,-0.25*\hei) -- (0* \wid,3.25*\hei);
\end{tikzpicture}
,
\begin{tikzpicture}[x=1.5cm,y=-.5cm,baseline=-1.05cm]

\def\wid{\standardWidth}
\def\hei{\standardHeight}
\def\nodesize{3}
\def\ang{90}

\node[] (0h1) at (-0.5* \wid,1*\hei) {};
\node[] (0h2) at (-0.5* \wid,2*\hei) {};
\node[v, minimum size=\nodesize] (11) at (0* \wid,0*\hei) {};
\node[v, minimum size=\nodesize] (12) at (0* \wid,1*\hei) {};
\node[v, minimum size=\nodesize] (13) at (0* \wid,2*\hei) {};
\node[v, minimum size=\nodesize] (14) at (0* \wid,3*\hei) {};
\node[] (1h1) at (0.5* \wid,1*\hei) {};
\node[] (1h2) at (0.5* \wid,2*\hei) {};

\draw[e] (11) to[in =0, out =180] (0h1);
\draw[e] (14) to[in =0, out =180] (0h2);
\draw[e] (12) to[in =180, out =180] (13);

\draw[e] (12) to[in =0, out =0] (11);
\draw[e] (14) to[out = 0, in = 180] (1h2);
\draw[e] (13) to[out = 0, in = 180] (1h1);

\draw[very thick] (0* \wid,-0.25*\hei) -- (0* \wid,3.25*\hei);
\end{tikzpicture}
.
\end{center}

Deleting a bar at any of these letters just deletes the letter, so in particular doesn't change the pivot sequence.

Suppose on the other hand that our non-pivot letter is either the first or last letter of the graffito. Without loss of generality, say that it is the first letter. Since it is not a pivot, it must be one of the following:

\begin{center}
\begin{tikzpicture}[x=1.5cm,y=-.5cm,baseline=-1.05cm]

\def\wid{\standardWidth}
\def\hei{\standardHeight}
\def\nodesize{3}
\def\ang{90}

\node[] (0h1) at (0.5* \wid,1*\hei) {};
\node[] (0h2) at (0.5* \wid,2*\hei) {};
\node[v, minimum size=\nodesize] (11) at (0* \wid,0*\hei) {};
\node[v, minimum size=\nodesize] (12) at (0* \wid,1*\hei) {};
\node[v, minimum size=\nodesize] (13) at (0* \wid,2*\hei) {};
\node[v, minimum size=\nodesize] (14) at (0* \wid,3*\hei) {};
\node[] (1h1) at (0.5* \wid,1*\hei) {};
\node[] (1h2) at (0.5* \wid,2*\hei) {};

\draw[e] (11) to[in =180, out =180] (12);
\draw[e] (13) to[in =180, out =180] (14);

\draw[e] (12) to[in =0, out =0] (13);
\draw[e] (11) to[out = 0, in = 180] (1h1);
\draw[e] (14) to[out = 0, in = 180] (1h2);

\draw[very thick] (0* \wid,-0.25*\hei) -- (0* \wid,3.25*\hei);
\end{tikzpicture}
,
\begin{tikzpicture}[x=1.5cm,y=-.5cm,baseline=-1.05cm]

\def\wid{\standardWidth}
\def\hei{\standardHeight}
\def\nodesize{3}
\def\ang{90}

\node[] (0h1) at (-0.5* \wid,1*\hei) {};
\node[] (0h2) at (-0.5* \wid,2*\hei) {};
\node[v, minimum size=\nodesize] (11) at (0* \wid,0*\hei) {};
\node[v, minimum size=\nodesize] (12) at (0* \wid,1*\hei) {};
\node[v, minimum size=\nodesize] (13) at (0* \wid,2*\hei) {};
\node[v, minimum size=\nodesize] (14) at (0* \wid,3*\hei) {};
\node[] (1h1) at (0.5* \wid,1*\hei) {};
\node[] (1h2) at (0.5* \wid,2*\hei) {};

\draw[e] (11) to[in =180, out =180] (14);
\draw[e] (13) to[in =180, out =180] (12);

\draw[e] (13) to[in =0, out =0] (14);
\draw[e] (11) to[out = 0, in = 180] (1h1);
\draw[e] (12) to[out = 0, in = 180] (1h2);

\draw[very thick] (0* \wid,-0.25*\hei) -- (0* \wid,3.25*\hei);
\end{tikzpicture}
,
\begin{tikzpicture}[x=1.5cm,y=-.5cm,baseline=-1.05cm]

\def\wid{\standardWidth}
\def\hei{\standardHeight}
\def\nodesize{3}
\def\ang{90}

\node[] (0h1) at (-0.5* \wid,1*\hei) {};
\node[] (0h2) at (-0.5* \wid,2*\hei) {};
\node[v, minimum size=\nodesize] (11) at (0* \wid,0*\hei) {};
\node[v, minimum size=\nodesize] (12) at (0* \wid,1*\hei) {};
\node[v, minimum size=\nodesize] (13) at (0* \wid,2*\hei) {};
\node[v, minimum size=\nodesize] (14) at (0* \wid,3*\hei) {};
\node[] (1h1) at (0.5* \wid,1*\hei) {};
\node[] (1h2) at (0.5* \wid,2*\hei) {};

\draw[e] (14) to[in =180, out =180] (11);
\draw[e] (12) to[in =180, out =180] (13);

\draw[e] (12) to[in =0, out =0] (11);
\draw[e] (14) to[out = 0, in = 180] (1h2);
\draw[e] (13) to[out = 0, in = 180] (1h1);

\draw[very thick] (0* \wid,-0.25*\hei) -- (0* \wid,3.25*\hei);
\end{tikzpicture}
.
\end{center}

Upon deleting this bar, this first letter `collapses onto' the second letter, which becomes the new first letter. The new first letter is almost the old second letter, except that the two connections on the left have been joined together. In particular, if this letter was a pivot, then it has become a left end pivot. The other letters of the word are unchanged.

In total, we have shown that the only way that the pivot sequence can change under a bar deletion is that deleting the first (respectively last) bar may turn the first (respectively last) pivot into a left end (respectively right end) pivot. The result follows.
\end{proof}

Using \cref{lem:pivotSequences}, we may construct a three-step filtration of $C_*[i,0]$ as follows. Let $F^0 C_*[i,0]$ be the subcomplex spanned by graffiti whose pivot sequences begin with a left end pivot and end with a right end pivot. Let $F^1 C_*[i,0]$ be the subcomplex spanned by graffiti whose pivot sequences either begin with a left end pivot or end with a right end pivot, and let $F^2 C_*[i,0]$ be all of $C_*[i,0]$.

\begin{proposition}[\cref{thm:mainTechnical} (\ref{part:igeq3})] \label{prop:igeq3} The homology of each filtration quotient $\faktor{F^p C_*[i,0]}{F^{p-1} C_*[i,0]}$ is zero, so the homology of $C_*[i,0]$ itself must be zero.
\end{proposition}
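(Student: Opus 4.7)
The plan is to use \cref{lem:pivotSequences} and \cref{lem:deletePivotGet0} to decompose each filtration quotient as a direct sum of tensor products, in each of which at least one factor will turn out to be acyclic.

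By \cref{lem:pivotSequences}, the middle pivots $p_2, \ldots, p_{i-2}$ are preserved exactly by the differential within each filtration step, and the end pivots change only in ways compatible with the filtration. Hence $F^p / F^{p-1}$ splits as a direct sum indexed by the allowed pivot sequences. In a single summand a graffito decomposes uniquely as
\[ L \cdot p_1 \cdot b_1 \cdot p_2 \cdot b_2 \cdots p_{i-2} \cdot b_{i-2} \cdot p_{i-1} \cdot R, \]
where each bridge $b_j$ is a (possibly empty) word in the $4$-letter alphabet of one-loop letters with two connections on each side (from \cref{lem:oneLoopAlphabet}), and the tails $L, R$ are nonempty right-open or left-open one-loop graffiti, present precisely when the corresponding end pivot is a middle pivot.

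The next step is to verify that the differential respects this decomposition: deleting a bar at a pivot gives zero by \cref{lem:deletePivotGet0}; deleting an interior bar in a bridge or tail simply removes that letter (as in the remark after \cref{lem:oneLoopAlphabet}), which is the standard letter-deletion face map; and deleting an endmost bar (only relevant when the adjacent pivot is a middle pivot, so the corresponding tail is present) closes up the hanging edges of the new endmost letter, giving the appropriate boundary face map of the tail complex. Thus each summand factors as a tensor product over $R$ (up to Koszul signs) of the tail complexes (if present), the bridge complexes, and copies of $R$ (one per pivot). Each bridge complex is by construction the augmented complex of words on the $4$-letter alphabet, with empty word in degree $0$ and differential the signed alternating sum of letter deletions; such an augmented word complex is acyclic for any nonempty alphabet, as the chain homotopy $H(w) = \ell w$ (for a fixed letter $\ell$) satisfies $dH + Hd = \mathrm{id}$, including in degree $0$ where $d(\ell) = 1_\emptyset$ supplies the needed term.

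Since $i \geq 3$ guarantees at least one bridge in every summand, and a tensor product of $R$-free complexes in which one factor is acyclic is itself acyclic, each summand of each $F^p / F^{p-1}$ is acyclic, so $F^p / F^{p-1}$ is acyclic and therefore $C_*[i,0]$ is acyclic by the long exact sequences of the filtration. The hard part will be the verification of the tensor factorization, in particular checking carefully that the face maps with their signs at the pivot--bridge and pivot--tail interfaces match the tensor product differential; this is exactly where \cref{lem:pivotSequences} is essential, since it guarantees the middle pivots are not altered and the only nontrivial interactions happen through the bridges and tails.
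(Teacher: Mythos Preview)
Your proof is correct and follows the same strategy as the paper: split each filtration quotient by pivot sequence, then recognise the summand as a (shifted) tensor product in which some factor is acyclic.

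There is one genuine point of difference worth recording. The paper writes the middle factors as $C_*^{LR}[1,0]$ and asserts that this complex has vanishing homology ``by \cref{lem:openEnds}''. Taken literally this is not right: \cref{lem:openEnds} says $C_*^{LR}[1,0]$ is the word complex on four letters, whose homology is a copy of $R$ in degree~$1$, not zero. What is actually needed---and what you use---is the \emph{augmented} word complex, which includes the empty word in degree~$0$ (corresponding to two adjacent pivots with no bars between them, which is certainly allowed in $C_*[i,0]$). Your contracting homotopy $H(w)=\ell w$ shows this augmented complex is acyclic, and that is what makes the argument go through. You are also more explicit than the paper about the fact that the formula with tail factors $C_*^R[1,0]$ and $C_*^L[1,0]$ applies only in the top filtration quotient, and that in $F^0$ and $F^1/F^0$ one or both tails are absent; the paper's displayed tensor product is really only the $F^2/F^1$ case, with the others being ``similar''. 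Your uniform phrasing (``present precisely when the corresponding end pivot is a middle pivot'') handles all three quotients at once.
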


\begin{proof} The second claim follows from the first using the spectral sequence associated to the filtration, so it suffices to prove the first claim.

Each filtration quotient is a direct sum over pivot sequences. We will write $C_*(p_1, \dots, p_{i-1})$ for the summand associated to the pivot sequence $(p_1, \dots, p_{i-1})$. Since $i \geq 3$, this sequence consists of at least two pivots.

In the filtration quotient, we have by \cref{lem:pivotSequences} that the differential is given by deleting the non-pivot letters, except that if there is only one letter before the first pivot or after the last pivot, then deleting that letter gives zero.

In particular, we can identify $C_*(p_1, \dots, p_{i-1})$ as a shifted tensor product
$$C_*(p_1, \dots, p_{i-1}) \cong (C_*^{R}[1,0] \otimes C_*^{LR}[1,0] \otimes \dots \otimes C_*^{LR}[1,0] \otimes C_*^{L}[1,0])[i-1],$$ where we recall the definitions of the complexes of (two-sided) open graffiti from \cref{def:complexCij}. The degree shift comes from the fact that each pivot contributes 1 to the length of the word.

The homology of $C_*^{LR}[1,0]$ vanishes by \cref{lem:openEnds}, and everything is $R$-free, so the homology of this tensor product also vanishes, and the result follows.
\end{proof}

This completes the verification of \cref{thm:mainTechnical}.

\section{Proof of \cref{thm: main} and \cref{prop: model comparison}} \label{section:basechange}

In this section we will establish the new model, \cref{thm: main}, and the comparison with the old model, \cref{prop: model comparison}. In \cref{section: product and grading,section: spectral sequence,section:filtrationByDividers,section: mainTechnical}, we assumed that $a=0$, so, as discussed in \cref{sectionBar}, we must first reduce to this case. To do so, we follow \cite[Section 3]{BBRWS}. The idea that a pointed ring $(R,a)$ is equivalently a map $\mathbb{Z}[a] \to R$, so the universal pointed ring is $(\mathbb{Z}[a],a)$, which enjoys a natural map to $(\mathbb{Z},0)$. We consider $\mathbb{Z}[a]$ as a graded ring, with $|a|=1$. A \emph{$\mathbb{Z}[a]$-module with additional grading} is then a graded module over the graded ring $\mathbb{Z}[a]$.

\begin{lemma} \label{lemma: reduction techniques} Let $f \colon C \to D$ be a map of chain complexes of finitely generated free $\mathbb{Z}[a]$-modules with additional grading. Let $(R,a)$ be any pointed ring. \begin{enumerate}
    \item If $f \otimes_{\mathbb{Z}[a]} \mathbb{Z}$ is a weak equivalence, then $f$ is a weak equivalence.
    \item If $f$ is a weak equivalence, then $f \otimes_{\mathbb{Z}[a]} R$ is a weak equivalence.
\end{enumerate}
\end{lemma}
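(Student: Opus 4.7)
The plan is to reduce both parts to a single calculation about the mapping cone $E = \mathrm{Cone}(f)$, which is itself a bounded-below chain complex of finitely generated free graded $\mathbb{Z}[a]$-modules. In these terms, part~(1) becomes: if $E \otimes_{\mathbb{Z}[a]} \mathbb{Z}$ is acyclic then so is $E$; and part~(2) becomes: if $E$ is acyclic then so is $E \otimes_{\mathbb{Z}[a]} R$ for any pointed ring $(R,a)$.

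Part~(2) is the routine half. A bounded-below acyclic complex of projective modules is contractible (build the chain contraction inductively from the bottom, at each stage lifting the appropriate map into $\ker d$ through the surjection $d$, using projectivity), and freeness of each $E_p$ over $\mathbb{Z}[a]$ ensures the contraction can be chosen $\mathbb{Z}[a]$-linear. Applying $-\otimes_{\mathbb{Z}[a]} R$ preserves this contraction, so $E \otimes_{\mathbb{Z}[a]} R = \mathrm{Cone}(f \otimes_{\mathbb{Z}[a]} R)$ is contractible, hence $f \otimes_{\mathbb{Z}[a]} R$ is a weak equivalence.

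For part~(1), the key observation is that since $d$ is $\mathbb{Z}[a]$-linear it preserves the additional grading, so $E$ decomposes as $\bigoplus_w E_{*,w}$ in the category of chain complexes of abelian groups, with multiplication by $a$ giving chain maps $E_{*,w-1} \to E_{*,w}$. Freeness of $E$ as a graded $\mathbb{Z}[a]$-module makes $a\cdot(-)$ injective on the obvious $\mathbb{Z}$-bases and identifies the cokernel with $(E \otimes_{\mathbb{Z}[a]} \mathbb{Z})_{*,w}$, giving a short exact sequence
\[
0 \to E_{*,w-1} \xrightarrow{a\cdot} E_{*,w} \to (E \otimes_{\mathbb{Z}[a]} \mathbb{Z})_{*,w} \to 0
\]
of chain complexes of abelian groups. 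Feeding the hypothesis $H_*(E \otimes_{\mathbb{Z}[a]} \mathbb{Z}) = 0$ into the associated long exact sequence forces the connecting maps $a \colon H_p(E_{*,w-1}) \to H_p(E_{*,w})$ to be isomorphisms for every $p$ and $w$.

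To conclude, I would use finite generation: for any fixed $p$, each of the graded modules $E_{p-1}, E_p, E_{p+1}$ is bounded below in the additional grading, so for $w$ sufficiently negative all three of $E_{p-1,w}$, $E_{p,w}$, $E_{p+1,w}$ vanish and hence $H_p(E_{*,w}) = 0$ at that $w$. Propagating along the isomorphisms gives $H_p(E_{*,w}) = 0$ for every $w$, and summing over $w$ yields $H_*(E) = 0$, so $f$ is a weak equivalence. The only piece of bookkeeping that requires any care is the exactness of the displayed short exact sequence — confirming that mult-by-$a$ is injective with the claimed cokernel — but this falls out immediately from the freeness and the grading conventions, so I do not expect a serious obstacle.
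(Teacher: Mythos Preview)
Your proof is correct and follows essentially the same approach as the paper: both reduce to the mapping cone $E$, and for part~(1) both use the short exact sequence $0 \to E \xrightarrow{a} E \to E \otimes_{\mathbb{Z}[a]} \mathbb{Z} \to 0$ together with the fact that the additional grading is bounded below (the paper phrases the endgame as a Nakayama-type statement $M \otimes_{\mathbb{Z}[a]} \mathbb{Z} = 0 \Rightarrow M = 0$ applied to $M = H_*(E)$, while you propagate isomorphisms weight-by-weight, but these are the same argument). For part~(2) the paper instead invokes $H_*(E \otimes_{\mathbb{Z}[a]} R) = \mathrm{Tor}^{\mathbb{Z}[a]}_*(0,R) = 0$ directly rather than building a contraction, but this is an equivalent standard packaging of the same fact.
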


\begin{proof} Note first that if $M$ is a finitely generated $\mathbb{Z}[a]$-module with additional grading, and $M \otimes_{\mathbb{Z}[a]} \mathbb{Z}=0$, then $M=0$. This is because all elements of $M$ are divisible by $a$, so $M_i \neq 0$ implies $M_{i-1} \neq 0$, but finite generation means that $M_i=0$ for $i \ll 0$. For (1), we can equivalently show that $H_*(E)=0$, where $E$ is the mapping cone of $f$. Since $C$ and $D$ are degree-wise free, $E$ is also degree-wise free, so $E \xrightarrow{a \cdot-} E$ is injective, and we get a short exact sequence $0 \to E \xrightarrow{a \cdot-} E \to E \otimes_{\mathbb{Z}[a]} \mathbb{Z} \to 0$, hence a homology injection $H_*(E) \otimes_{\mathbb{Z}[a]} \mathbb{Z} = \mathrm{Coker}(H_*(E) \xrightarrow{a \cdot-} H_*(E)) \hookrightarrow H_*(E \otimes_{\mathbb{Z}[a]} \mathbb{Z})$, which is zero by assumption since $E \otimes_{\mathbb{Z}[a]} \mathbb{Z}$ is the mapping cone of $f \otimes_{\mathbb{Z}[a]} \mathbb{Z}$. Setting $M = H_*(E)$ gives the result ($E$ is degree-wise finitely generated because $C$ and $D$ are, so $H_*(E)$ is too because $\mathbb{Z}[a]$ is Noetherian).

For (2), we have by assumption that $E$ is acyclic. Since $E$ is also degree-wise free, we may write $H_*(E \otimes_{\mathbb{Z}[a]} R) = \mathrm{Tor}^{\mathbb{Z}[a]}_*(E,R)=0$, as required.  
\end{proof}

We now prove the main results. For any $(R,a)$ we have defined maps 
$$(T_R[x_1,x_3],d) \xrightarrow{\psi} (T_R[x,\hat{x},r,y],d) \xrightarrow{\varphi} (\CPL(4;R,a),d)$$
and we wish to show that both of these maps are equivalences. We first explain how to give everything an additional grading in the universal case $(\mathbb{Z}[a],a)$. In \cref{section: product and grading}, we defined an additional grading on $\CPL(4;\mathbb{Z},0)$ by counting loops, and we can define an additional grading on $\CPL(4;\mathbb{Z}[a],a)$ in the same way (this works since $\mathbb{Z}[a]$ is graded by setting $|a|=1$, so when the differential turns loops into factors of $a$ the grading is preserved, c.f.~\cite[Definition 2.11]{BBRWS}). To define compatible gradings on the models, recall that $\psi$ is defined by $\psi(x_1) =x$, $\psi(x_3) = y + 2 x r$, and that $\varphi$ is defined by sending the generators to certain sums of diagrams. Examining these diagrams, we see that we may set
$$|x_1|=1, \ |x_3|=2, \ |x|=|\hat{x}|=|r| =1, \textrm{ and }|y|=2,$$
to obtain the desired compatible gradings.

\begin{proof}[Proof of \cref{prop: model comparison}] We wish to show that $\psi$ is a weak equivalence. In the universal case $(R,a) =(\mathbb{Z}[a],a)$, we saw above that both domain and codomain can be given an additional grading. It then follows from \cref{lemma: reduction techniques} that it suffices to prove the result in the special case $(R,a)=(\mathbb{Z},0)$, where we work from now on.

We now claim that it further suffices to show that the restriction $\psi: (T_{\mathbb{Z}}[x_1],d) \to (T_{\mathbb{Z}}[x,\hat{x},r],d)$ is a weak equivalence. To see this, first change basis in the target, writing $T_R[x,\hat{x},r,y] \cong T_R[x,\hat{x},r,y']$ by setting $y'=y + 2 x r$, so that $\psi(x_3) = y'$. The differential can only reduce the number of instances of $x_3$ in a monomial word, and likewise for $y'$, so counting instances of these letters gives compatible increasing filtrations on the two algebras. In the filtration quotients, $x_3$ and $y'$ become cycles, so the map on the filtration quotients may be identified as a tensor power of the above restriction of $\psi$, which is a weak equivalence if and only if the restriction itself is a weak equivalence (by the Künneth theorem, using that both complexes are free over $\mathbb{Z}$). The spectral sequence associated to the filtration then gives the claim.

It remains to establish that the restriction $\psi \colon (T_{\mathbb{Z}}[x_1],d) \to (T_{\mathbb{Z}}[x,\hat{x},r],d)$ is a weak equivalence. To see this, note that both $\psi$ and the restricted differentials preserve word length, so we must verify for each $\ell \geq 0$ that $\psi$ is an equivalence on the subcomplex spanned by words of length $\ell$. In the domain, this subcomplex consists of the single word $x_1^{\ell}$. In the target, it is the $\ell$-fold tensor product of the complex obtained in the case $\ell=1$, which consists just of the three letters, with differential $d(x)=d(\hat{x})=0$, $d(r)=\hat{x}-x$ (since we are in the special case $a=0$). The homology of this $\ell=1$ complex is a single copy of $\mathbb{Z}$ in degree 0, generated by (e.g.) the class of $x$, so the result follows.
\end{proof}

\begin{proof}[Proof of \cref{thm: main}] The map $\varphi$ respects the involutions by construction, so by \cref{prop: model comparison}, we may equivalently show that $\varphi \circ \psi \colon (T_{R}[x_1,x_3],d) \to \CPL(4;R,a)$ is a weak equivalence. As before \cref{lemma: reduction techniques} allows us to reduce to the case $(\mathbb{Z},0)$.

In \cref{section: spectral sequence}, we filtered the complex $\CPL(4;\mathbb{Z},0)$ by number of \emph{non-dividers} $\myDA'(x)$ (\cref{def: filtration}) and studied the resulting spectral sequence. We will first get a compatible filtration on $(T_{R}[x_1,x_3],d)$. Recall that the number of non-dividers $\myDA'(x)$ of a graffito $x$ is defined to be $\myDA'(x)=\deg(x)-1-\myDA(x)$, where $\myDA(x)$ is the number of dividers, and that the filtration by non-dividers is multiplicative by \cref{cor:NonDividersOfProduct}. Consulting the definitions of the maps, we see that $\varphi \circ \psi$ sends $x_1$ (of degree 1) to a graffito with no dividers, and sends $x_3$ (of degree 3) to a sum of graffiti having at most one divider. Letting $x_1$ have filtration degree 0, letting $x_3$ have filtration degree 2, and letting the filtration degree of a monomial be the sum of the filtration degrees of its letters, defines a filtration on $(T_{R}[x_1,x_3],d)$, and $\varphi \circ \psi$ carries this filtration to the non-dividers filtration.

This map of filtered dgas induces a map between the associated spectral sequences. Each of the spectral sequences converges to $E^{\infty}$-page the associated graded of its homology, and it suffices to show that $\varphi \circ \psi$ induces an isomorphism on this associated graded. We will argue the stronger claim that it is actually an isomorphism on $E^2$-pages.

To do this, use \cref{cor: E2 as tensor} to see that the target $E^2$-page is the free nonunital associative algebra on $\varphi(x)$ and $\varphi(y)$. The differential on $(T_{R}[x_1,x_3],d)$ decreases filtration, so is zero on each filtration quotient, and $E^1 = E^0$. Since it further decreases filtration by at least two, the $d^1$-differential is also zero, and $E^2=E^0=T_{R}[x_1,x_3]$. By definition, $\varphi(x) = \varphi \circ \psi(x_1)$, and $\varphi(y) \equiv \varphi \circ \psi(x_3)$ modulo $\varphi(2xr)$, which is of lower filtration. The result follows.
\end{proof}
\printbibliography

@misc{BBRWS,
      title={Even Temperley-Lieb algebras and the dga of planar loops}, 
      author={Rachael Boyd and Guy Boyde and Oscar Randal-Williams and Robin J. Sroka},
      year={2025},
      eprint={2511.08550},
      archivePrefix={arXiv},
      primaryClass={math.AT}
}

@article{Hepworth,
author = {Hepworth, Richard},
title = {Homological stability for Iwahori–Hecke algebras},
journal = {Journal of Topology},
volume = {15},
number = {4},
pages = {2174-2215},
year = {2022}
}

@Article{BH,
AUTHOR = {Boyd, R. and Hepworth, R.},
     TITLE = {The homology of the {T}emperley-{L}ieb algebras},
   JOURNAL = {Geom. Topol.},
  FJOURNAL = {Geometry \& Topology},
    VOLUME = {28},
      YEAR = {2024},
    NUMBER = {3},
     PAGES = {1437--1499},
      ISSN = {1465-3060,1364-0380},
   MRCLASS = {20J06 (16E40 20F36)},
  MRNUMBER = {4746417},
       DOI = {10.2140/gt.2024.28.1437}
}

@article {Sroka,
    AUTHOR = {Sroka, R. J.},
     TITLE = {The homology of a {T}emperley-{L}ieb algebra on an odd number
              of strands},
   JOURNAL = {Algebr. Geom. Topol.},
  FJOURNAL = {Algebraic \& Geometric Topology},
    VOLUME = {24},
      YEAR = {2024},
    NUMBER = {6},
     PAGES = {3527--3542},
      ISSN = {1472-2747,1472-2739},
   MRCLASS = {16E40 (20F55 20J05)},
  MRNUMBER = {4812225},
       DOI = {10.2140/agt.2024.24.3527}
}

@article{RidoutSaintAubin,
    AUTHOR = {Ridout, David and Saint-Aubin, Yvan},
     TITLE = {Standard modules, induction and the structure of the
              {T}emperley-{L}ieb algebra},
   JOURNAL = {Adv. Theor. Math. Phys.},
  FJOURNAL = {Advances in Theoretical and Mathematical Physics},
    VOLUME = {18},
      YEAR = {2014},
    NUMBER = {5},
     PAGES = {957--1041}
}

@article {GrahamLehrer,
    AUTHOR = {Graham, J. J. and Lehrer, G. I.},
     TITLE = {Cellular algebras},
   JOURNAL = {Invent. Math.},
  FJOURNAL = {Inventiones Mathematicae},
    VOLUME = {123},
      YEAR = {1996},
    NUMBER = {1},
     PAGES = {1--34}
}

@book {McCleary,
    AUTHOR = {McCleary, John},
     TITLE = {A user's guide to spectral sequences},
    SERIES = {Cambridge Studies in Advanced Mathematics},
    VOLUME = {58},
   EDITION = {Second},
 PUBLISHER = {Cambridge University Press, Cambridge},
      YEAR = {2001}
}

\end{document}